\newtheorem{thm}{Theorem}[section]
\newtheorem{lemma}[thm]{Lemma}
\newtheorem{prop}[thm]{Proposition}
\newtheorem{cor}[thm]{Corollary}
\newtheorem{fact}[thm]{Fact}
\theoremstyle{definition} 
\newtheorem{sit}[thm]{Situation}
\newtheorem{defn}[thm]{Definition}
\theoremstyle{remark}
\newtheorem{rem}[thm]{Remark}
\newtheorem{eg}[thm]{Example}
\newcommand{\sbu}{{\raisebox{1pt}{\scaleto{\bullet}{2.5pt}}}}
\newcommand{\eps}{\varepsilon}
\newcommand{\NN}{\mathbb{N}}
\newcommand{\PP}{\mathbb{P}}
\newcommand{\ZZ}{\mathbb{Z}}
\newcommand{\cC}{\mathcal{C}}
\newcommand{\cH}{\mathcal{H}}
\newcommand{\cL}{\mathcal{L}}
\newcommand{\cM}{\mathcal{M}}
\newcommand{\cO}{\mathcal{O}}
\newcommand{\cP}{\mathcal{P}}
\newcommand{\cQ}{\mathcal{Q}}
\newcommand{\cU}{\mathcal{U}}
\newcommand{\cV}{\mathcal{V}}
\newcommand{\on}{\mathrm}
\newcommand{\End}{\on{End}}
\newcommand{\Fr}{\on{Fr}}
\newcommand{\Fix}{\on{Fix}}
\newcommand{\Fl}{\on{Fl}}
\newcommand{\GL}{\on{GL}}
\newcommand{\Hom}{\on{Hom}}
\newcommand{\Pic}{\on{Pic}}
\newcommand{\Spec}{\on{Spec}}
\newcommand{\cpb}{\mathcal{P}^{\sbu}}
\newcommand{\cqb}{\mathcal{Q}^{\sbu}}
\newcommand{\be}{{\overline{\eta}}}
\newcommand{\ess}[2]{\fill[pattern=north west lines] (#2-0.5, 0.5-#1) rectangle (#2+0.5, -0.5-#1);}
\newcommand{\fdot}[2]{\draw[circle, fill=black] (#2,- #1) circle (0.1in);}
\newcommand{\opendot}[2]{\draw[circle] (#2, - #1) circle (0.1in);}
\newcommand{\Ess}{\on{Ess}}
\newcommand{\EssR}{\on{EssR}}
\newcommand{\EssC}{\on{EssC}}
\newcommand{\tw}{\on{tw}}
\newcommand{\sch}{\mathfrak{S}}
   \def\MR#1{}
\begin{document}
\title[Versality of Brill-Noether flags]{Versality of Brill-Noether flags and degeneracy loci of twice-marked curves}
\author[N. Pflueger]{Nathan Pflueger}
\email{npflueger@amherst.edu}
\address{
    Amherst College\\
    31 Quadrangle Drive, Amherst, MA 01002}

\begin{abstract}
A Brill-Noether degeneracy locus is closure in $\Pic^d(C)$ of the locus of line bundles with a specified rank function $r(a,b) = h^0(C,L(-ap-bq))$. These loci generalize the classical Brill-Noether loci $W^r_d(C)$ as well as Brill-Noether loci with imposed ramification. For general $(C,p,q)$ we determine the dimension, singular locus, and intersection class of Brill-Noether degeneracy loci, generalizing classical results about $W^r_d(C)$. The intersection class has a combinatorial interpretation in terms of the number of reduced words for a permutation associated to the rank function, or alternatively the number of saturated chains in the Bruhat order. The essential tool is a versality theorem for a certain pair of flags on $\Pic^d(C)$, conjectured by Melody Chan and the author.
\end{abstract}

\maketitle

\section{Introduction}

The classical Brill--Noether and Gieseker--Petri theorems have many proofs and interpretations. Our starting point is an intriguing deformation-theoretic point of view on these theorems. Fix a curve $C$ with two marked points $p,q$. For any line bundle $L$, we may fix $N \gg 0$, and interpret $H^0(C,L)$ as the intersection of two linear subspaces $P = H^0(C, L(Np))$ and $Q = H^0(C,L(Nq))$ within $H = H^0(C,L(Np+Nq))$; since $N \gg 0$ the dimensions of $P,Q,H$ do not depend on $L$. Varying $L$, we obtain a deformation of pairs of subspaces within a vector space. The observation is this: the dimension and smoothness theorems for $W^r_d(C)$ follow from the assumption that \emph{for a general curve, this deformation is versal.}

This paper elaborates on this deformation-theoretic point of view, and thereby obtain some novel results. We extend from Grassmannians to flag varieties. Flag varieties have a beautiful system of Schubert subvarieties indexed by permutations, and this theory is mirrored in Brill--Noether theory of twice-marked curves. A versal deformation plays the pivotal role.

\subsection{The versality theorem}

Fix a smooth twice-marked curve $(C,p,q)$ over an algebraically closed field $k$, and an integer $d \geq 2g-1$. Pushing down a Poincar\'e line bundle on $C \times \Pic^d(C)$ gives a rank $n = d+1-g$ vector bundle $\cH_d$ on $\Pic^d(C)$, with fibers $(\cH_d)_{[L]} \cong H^0(C, L)$. For any divisor $D$ on $C$, denote by $\tw_D: \Pic^d(C) \to \Pic^{d+\deg D}(C)$ the ``twist'' isomorphism given pointwise by $[L] \mapsto [L(D)]$, and define for all $0 \leq a,b \leq d-2g+1$,
$
\cP^a_d = \tw_{ap} \cH_{d-a}$, and $
\cQ_d^b = \tw_{bq} \cH_{d-b}.
$
Regarding these as subbundles of $\cH_d$, we obtain two flags $\cpb_d, \cqb_d$ in $\cH_d$, both of coranks $[0, d-2g+1] \cap \ZZ$. Call these the \emph{degree-$d$ Brill-Noether flags of $(C,p,q)$}.
Section \ref{sec:versality} defines the notion of a \emph{versal pair of flags}. Informally, two flags are versal if their relative positions in any fiber deform ``as generally as possible'' around that fiber. The first main result of this paper is

\begin{thm}
\label{thm:versality}
If $(C,p,q)$ is a general twice-marked curve of genus $g$, then the pair of degree-$d$ Brill-Noether flags $\cpb_d, \cqb_d$ of $(C,p,q)$ is versal.
\end{thm}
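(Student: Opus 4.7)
The approach has two parts: first, reduce the versality assertion to the injectivity of a generalized Petri map via an infinitesimal criterion; second, prove this Petri injectivity by an Eisenbud-Harris-style specialization to a flag curve.

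For the first part, I would use the infinitesimal criterion for versality of a pair of flags to be developed in Section~\ref{sec:versality}. Versality at a point $[L] \in \Pic^d(C)$ should amount to surjectivity of a Kodaira-Spencer type map from the tangent space $T_{[L]} \Pic^d(C) = H^1(C, \cO_C)$ to a cohomology space parameterizing first-order deformations of the pair of flags in the fiber at $[L]$ modulo $\GL$-trivializations. Using that the fiber $(\cP^a \cap \cQ^b)_{[L]}$ is $H^0(C, L(-ap-bq))$ and that the Kodaira-Spencer comparison is given by cup product against the universal class in $H^1(\cO_C)$, Serre duality identifies the dual of this map with a multiplication-of-sections map
$$
\mu_L \colon \bigoplus_{(a,b)} H^0(C, L(-ap-bq)) \otimes H^0(C, K_C \otimes L^{-1}(ap + bq)) \longrightarrow H^0(C, K_C),
$$
where the sum runs over the essential pairs $(a,b)$ of the rank function of $L$. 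Versality at $[L]$ therefore reduces to the injectivity of $\mu_L$, which I would call the \emph{generalized Petri map}.

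For the second part, I would prove injectivity of $\mu_L$ by specializing $(C,p,q)$ to a flag curve: a reducible nodal curve, such as a chain of $g$ elliptic components with $p$ on one end and $q$ on the other, on which sections can be analyzed one component at a time. The theory of limit linear series, adapted to twice-marked curves, assigns to each section of $L$ a vanishing multi-index recording its order along each node and along $p, q$; on a sufficiently generic flag curve, distinct elements of a basis of sections produce distinct multi-indices. A non-zero element of $\ker \mu_L$ would then yield, after specialization, a global section of $K_C$ on the flag curve whose vanishing orders on each component are inconsistent with what is forced by the limit linear series constraints, producing a contradiction. Semicontinuity of the rank of $\mu_L$ as $(C,p,q)$ varies then transfers injectivity to a general smooth twice-marked curve.

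The principal obstacle is the combinatorial analysis on the flag curve. Unlike the classical Gieseker-Petri setting, where one handles a single pair of vector spaces, here $\mu_L$ packages together all essential pairs $(a,b)$ of the rank function simultaneously, and one must control vanishing orders along \emph{both} marked points on every component of the flag curve at once. I expect this to require an inductive bookkeeping over the components together with a twice-marked analogue of the Eisenbud-Harris compatibility conditions for limit linear series. A secondary subtlety, essentially already contained in the first step, is verifying that Petri injectivity for the full package of essential pairs (rather than for a single degeneracy locus at a time) is genuinely equivalent to versality in the sense of Section~\ref{sec:versality}.
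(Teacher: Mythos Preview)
Your two-part strategy matches the paper's exactly: reduce versality to injectivity of a Petri-type map (the paper's Corollary~\ref{cor:mudelta}), then prove that injectivity by degeneration. Two points of divergence are worth noting.

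First, a small error: the domain of the Petri map should be the sum over \emph{all} $(a,b)$ in $[0,d-2g+1]^2$, not only over the essential pairs of the rank function of $L$. Proposition~\ref{prop:mdual} identifies $M_{[L]}^\vee$ with the full sum $\sum_{a,b}(P^a+Q^b)^\perp\otimes(P^a\cap Q^b)$; restricting to essential pairs would give only a subspace, and injectivity there would not suffice for surjectivity of $\delta_{[L]}$.

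Second, the degeneration step differs in execution. The paper degenerates to a chain of elliptic curves (with $p-q$ non-torsion on each), not to an Eisenbud--Harris flag curve, and the argument does not pass through limit linear series or vanishing multi-indices. Instead, Section~\ref{sec:degen} works directly with rational sections on the twice-punctured curve, extends the node valuations $\nu_n$ to tensors, and proves an ``injective coupling'' statement (Theorem~\ref{thm:icdegen}) showing that if every component satisfies the coupled Petri condition, so does the geometric generic fiber of a smoothing. This buys two things your sketch does not obviously deliver: it works uniformly in all characteristics (the classical flag-curve base case of Eisenbud--Harris needs characteristic~$0$), and it handles the \emph{fully} coupled Petri map over all of $\ZZ\times\ZZ$ at once, sidestepping the bookkeeping you anticipate for tracking many pairs $(a,b)$ simultaneously through a limit-linear-series framework.
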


This was conjectured in \cite[Conjecture 6.3]{cpRR}.
One might hope to generalize Theorem \ref{thm:versality} to three or more marked points, but this is impossible, even for $g=0$; see \cite[Example 3.4]{cpRR}.

\subsection{The coupled Petri map}
Define, for any line bundle $L$ on a smooth twice-marked curve $(C,p,q)$ and any subset $S \subseteq \ZZ \times \ZZ$,
\begin{equation}
\label{eq:coupledTensors}
T^L_{p,q}(S) = \sum_{(a,b) \in S} H^0(C, L(-ap-bq)) \otimes H^0(C, \omega_C \otimes L^\vee(ap+bq)).
\end{equation}
We write $T^L_{p,q}$ as shorthand for $T^L_{p,q}(\ZZ \times \ZZ)$. To define this sum, regard all terms as subspaces of $H^0(C^\ast, L) \otimes H^0(C^\ast, \omega_C \otimes L^\vee)$, where $C^\ast = C \backslash \{p,q\}$. Call the elements of $T^L_{p,q}$ \emph{coupled tensors}. 

\begin{defn}
\label{defn:coupledPetri}
The \emph{(fully) coupled Petri map} of $L$ on $(C,p,q)$ is
$\mu^L_{p,q}:\ T^L_{p,q} \to H^0(C, \omega_C)$
given by cup products. Abbreviate $\mu^L_{p,q} |_{T^L_{p,q}(S)}$ by $\mu^L_{p,q} |_S$; we call this the \emph{$S$-coupled Petri map}.
If $\mu^L_{p,q}$ is injective for every $L$, then we say that $(C,p,q)$ satisfies the \emph{(fully) coupled Petri condition}; if $\mu^L_{p,q} |_S$ is injective for all $L$, we say that $(C,p,q)$ satisfies the \emph{$S$-coupled Petri condition.}
\end{defn}

\begin{eg}
\label{eg:genus01}
If $C$ is genus $0$, then $T^L_{p,q}$ is trivial so $(C,p,q)$ always satisfies the coupled Petri condition. If $C$ is genus $1$, then $\dim T^L_{p,q}$ is equal to the number of pairs $(a,b)$ such that $L \cong \cO_C(ap+bq)$, and $(C,p,q)$ satisfies the coupled Petri condition if and only if $p-q$ is non-torsion.
\end{eg}

The $\{(0,0)\}$-coupled Petri map is the usual Petri map. A one-marked-point analog of $\mu^L_{p,q}$ is studied in \cite[Lemma 3.1]{cht}.
Corollary \ref{cor:mudelta} will show that $\cpb_d, \cqb_d$ is a versal pair at $[L]$ if and only if the $[0,d-2g+1]^2$-coupled Petri map of $L$ is injective. This paper proves

\begin{thm}
\label{thm:coupledPetri}
Let $S \subseteq \ZZ \times \ZZ$ be finite. A general $(C,p,q)$ satisfies the $S$-coupled Petri condition, and therefore a very general $(C,p,q)$ satisfies the fully coupled Petri condition.
\end{thm}

\subsection{Brill-Noether degeneracy loci}

We will apply the versality theorem to generalize basic theorems about $W^r_d(C)$ to the following degeneracy loci. Given $(C,p,q)$, an integer $d$, and a function $r: \NN^2 \to \NN$, consider the locus
\begin{equation}
\label{eq:degenFirstForm}
\{ [L] \in \Pic^d(C):\ h^0(C, L(-ap-bq)) \geq r(a,b) \mbox{ for all } a,b \in \NN^2 \}.
\end{equation}
By choosing $r(a,b) = \max(0, r_0+1 -a -b)$, this is the usual $W^{r_0}_d(C)$; other choices of function $r(a,b)$ encode information about the geometry of the map $C \to \PP H^0(C, L)^\vee$. 

Only certain functions $r(a,b)$ are useful the construction (\ref{eq:degenFirstForm}). In fact, the only functions where equality $r(a,b) = h^0(C, L(-ap-bq))$ is possible are described as follows.

\begin{defn}
A \emph{dot array} is a finite subset of $\NN \times \NN$ such that no two distinct elements of $\Pi$ are in the same row or the same column. When drawing a dot array, we index positions like entries in a matrix: $(a,b)$ is placed in row $a$ and column $b$, with $(0,0)$ in the upper-left corner.

The \emph{rank function} of a dot array is the function $\NN^2 \to \NN$ defined by 
\begin{equation}
\label{eq:rPi}
r^\Pi(a,b) = \# \{ (a',b') \in \Pi:\ a' \geq a \mbox{ and } b' \geq b \}.
\end{equation}
Given a dot array $\Pi$, the \emph{row sequence} is the set of first coordinates of elements of $\Pi$ sorted in increasing order and usually denoted $(a_0, \cdots, a_r)$, where $r = |\Pi|-1$, and the \emph{column sequence} is the sorted set of second coordinates, usually denoted $(b_0, \cdots, b_r)$.
\end{defn}

See \cite[$\S 1.2$]{fultonPragacz} for a nice discussion and many examples. However, note that we use slightly different index conventions since we think of $a,b$ as codimensions rather than dimensions. This better matches our intended applications, where $a,b$ will be vanishing orders. For example, we index beginning at $0$ and count ``towards the lower right'' rather than ``towards the upper left.''

\begin{defn}
\label{defn:bnDegen}
Let $(C,p,q)$ be a twice-marked curve of genus $g$, $d$ a positive integer, and $\Pi$ a dot array such that $|\Pi| \geq d+1-g$. Define set-theoretically
$$W^\Pi_d(C,p,q) = \{ [L] \in \Pic^d(C):\ h^0(C, L(-ap-bq)) \geq r^\Pi(a,b) \mbox{ for all } a,b \in \NN^2 \}.$$
Give this locus a scheme structure by interpreting each inequality with a Fitting ideal; see Section \ref{sec:bnDegen}. We call $W^\Pi_d(C,p,q)$ a \emph{Brill-Noether degeneracy locus.} 
\end{defn}

In fact, most of the inequalities in Definition \ref{defn:bnDegen} are redundant. Define
\begin{equation}
\label{eq:essPi}
\Ess(\Pi) = \{ (a,b) \in \NN^2: r^\Pi(a-1,b) = r^\Pi(a,b-1) = r^\Pi(a,b) > r^\Pi(a+1,b) = r^\Pi(a,b+1) \},
\end{equation}
and call this the \emph{essential set} of $\Pi$.
One may replace ``for all $a,b \in \NN^2$'' with ``for all $(a,b) \in \Ess(\Pi)$'' in Definition \ref{defn:bnDegen} without changing the scheme structure. See Section \ref{sec:flags}. It will also be convenient to take a slightly larger set than $\Ess(\Pi)$ for some arguments; define the \emph{essential rows} $\EssR(\Pi)$ and \emph{essential columns} $\EssC(\Pi)$ to be the images of $\Ess(\Pi)$ along the first and second projections to $\NN$, respectively. With these definitions, define
$$\widetilde{W}^\Pi_d(C,p,q) = \{ [L] \in \Pic^d(C): h^0(C,L(-ap-bq)) \geq r^\Pi(a,b) \mbox{ for all } (a,b) \in \EssR(\Pi) \times \EssC(\Pi) \}.$$
We illustrate the geometric significance of dot arrays and degeneracy loci in Figure \ref{fig:dotArrays}. This figure shows various examples of dot arrays with $3$ dots. The sketch shows a cartoon of a twice-marked curve with a line bundle $[L] \in \widetilde{W}^\Pi_d(C,p,q)$, immersed in $\PP^2$ by the complete linear series of $L$. The boxes of the set $\Ess(\Pi)$ are shaded.
\newcommand{\grid}{\foreach \x in {0,...,3} \draw (\x-0.5, 0.5) -- (\x-0.5,-3.5);
\foreach \y in {0,...,3} \draw (-0.5, 0.5-\y) -- (3.5, 0.5-\y);
}
\begin{figure}
\centering
\begin{tabular}{|cc|cc|cc|cc|}
\hline
\begin{tikzpicture}[scale=0.25]
\grid
\ess{0}{0}
\fdot{0}{2} \fdot{1}{1} \fdot{2}{0}
\end{tikzpicture}
& \begin{tikzpicture}[scale=0.2]
\coordinate (p) at (0,0);
\coordinate (q) at (4,1);
\node [fill=black, circle, inner sep=1pt, label=left:$p$] at (p) {};
\node [fill=black, circle, inner sep=1pt, label=right:$q$] at (q) {};
\draw (-0.5,-2) .. controls (-0.5,-1) .. (0,0) .. controls (1,2) and (3,-1) .. (4,1) .. controls (4.5,2) .. (4.5,3);
\end{tikzpicture}

&\begin{tikzpicture}[scale=0.25]
\grid
\ess{0}{0} \ess{3}{0}
\fdot{0}{2} \fdot{1}{1} \fdot{3}{0}
\end{tikzpicture}
& \begin{tikzpicture}[scale=0.2]
\coordinate (p) at (0,0);
\coordinate (q) at (2,2);
\node [fill=black, circle, inner sep=1pt, label=left:$p$] at (p) {};
\node [fill=black, circle, inner sep=1pt, label=right:$q$] at (q) {};
\draw (-2,-2) .. controls (2,-3) and (-2,3) .. (2,2) .. controls (4,1.5) and (4,0) .. (3,0);
\end{tikzpicture}

&\begin{tikzpicture}[scale=0.25]
\grid
\ess{0}{0} \ess{2}{0}
\fdot{0}{2} \fdot{2}{1} \fdot{3}{0}
\end{tikzpicture}
\begin{tikzpicture}[scale=0.2]
\coordinate (p) at (0,0);
\coordinate (q) at (4,0);
\node [fill=black, circle, inner sep=1pt, label=left:$p$] at (p) {};
\node [fill=black, circle, inner sep=1pt, label=right:$q$] at (q) {};
\draw (-3,-3) .. controls (-2,-3) and (-1,-3) .. (0,0) .. controls (-1,-3) and (3,-3) .. (4,0) .. controls (4.3333,1) and (3,2).. (2,1);
\end{tikzpicture}

&\begin{tikzpicture}[scale=0.25]
\grid
\fdot{0}{0} \fdot{1}{2} \fdot{2}{1}
\ess{1}{1} \ess{0}{0}
\end{tikzpicture}
& 
\begin{tikzpicture}[scale=0.2]
\coordinate (p) at (0,0);
\coordinate (q) at (0,0);
\node [fill=black, circle, inner sep=1pt, label=right:${p,q}$] at (q) {};
\draw (1.5,2) .. controls (1,1)  .. (0,0) .. controls (-3,-3) and (-3,3) .. (0,0) .. controls (1,-1) .. (1.5,-2);
\end{tikzpicture}\\

\multicolumn{2}{|c|}{generic} & \multicolumn{2}{c|}{flex point at $p$} & \multicolumn{2}{c|}{cusp at $p$} & \multicolumn{2}{c|}{node joining $p,q$}
\\\hline

\begin{tikzpicture}[scale=0.25]
\grid
\fdot{0}{1} \fdot{1}{0} \fdot{2}{2}
\ess{0}{0} \ess{2}{2}
\end{tikzpicture}
& 
\begin{tikzpicture}[scale=0.15]
\coordinate (p) at (0,0);
\coordinate (q) at (5,-5);
\node [fill=black, circle, inner sep=1pt, label=below left:${p}$] at (p) {};
\node [fill=black, circle, inner sep=1pt, label=above right:${q}$] at (q) {};
\draw[dashed] (-2,2) -- (7,-7);
\draw (-1,2) .. controls (-1,1.5) and (-0.5,0.5) .. (0,0) .. controls (1,-1) and (3,0) .. (4,-1) .. controls (5,-2) and (4,-4) .. (5,-5) .. controls (5.5,-5.5) and (6.5,-6) .. (7,-6);
\end{tikzpicture}

& \begin{tikzpicture}[scale=0.25]
\grid
\fdot{0}{0} \fdot{1}{1} \fdot{2}{2}
\ess{0}{0} \ess{1}{1} \ess{2}{2}
\end{tikzpicture}
& 
\begin{tikzpicture}[scale=0.6]
\coordinate (p) at (0,0);
\coordinate (q) at (0,0);
\node [fill=black, circle, inner sep=1pt, label=above:${p,q}$] at (p) {};
\draw (0.7,1) .. controls (0.5,0) and (0.1,0)  .. (0,0) .. controls (-0.5,0) and (-2,1) .. (-2,0) ..
controls (-2,-1) and (-0.5,0) .. (0,0) .. controls (0.1,0) and (0.5,0) .. (0.7,-1);
\end{tikzpicture}

& \begin{tikzpicture}[scale=0.25]
\grid
\fdot{3}{3} \fdot{2}{0} \fdot{0}{1}
\ess{0}{0}
\ess{2}{0}
\ess{3}{3}
\end{tikzpicture}
& 
\begin{tikzpicture}[scale=0.2]
\coordinate (p) at (0,0);
\coordinate (q) at (4,0);
\node [fill=black, circle, inner sep=1pt, label=above:${p}$] at (p) {};
\node [fill=black, circle, inner sep=1pt, label=above:${q}$] at (q) {};
\draw[dashed] (-2,0) -- (6,0);
\draw (-1,2) .. controls (-1,1) and (-1,0) .. (0,0) .. controls (-2,0) and (0,-3) .. (2,-3) .. controls (4,-3) and (6,0) .. (4,0) .. controls (2,0) and (2,1) .. (3,2);
\end{tikzpicture}

&\begin{tikzpicture}[scale=0.25]
\grid
\fdot{0}{0} \fdot{2}{1} \fdot{3}{2}
\ess{0}{0} \ess{2}{1} \ess{3}{2}
\end{tikzpicture}
& 
\begin{tikzpicture}[scale=0.2]
\coordinate (p) at (0,0);
\node [fill=black, circle, inner sep=1pt, label=above right:${p,q}$] at (p) {};
\draw (-1,2) .. controls (-2,1) and (-2,0) .. (0,0) .. controls (-3,0) and (0,-3) .. (2,-3) .. controls (4,-3) and (3,0) .. (0,0) .. controls (-2,0) .. (-4,-1);
\end{tikzpicture}
\\
\multicolumn{2}{|c|}{bitangent} & \multicolumn{2}{c|}{tacnode joining $p,q$} & \multicolumn{2}{c|}{node and flex on line} & \multicolumn{2}{c|}{cusp on tangent}
\\\hline
\end{tabular}
\caption{Examples of Brill-Noether degeneracy loci, with $|\Pi| = 3$. The shaded boxes indicate $\Ess(\Pi)$.}
\label{fig:dotArrays}
\end{figure}
For the ``generic'' dot array of size $r+1$, $\Pi = \{ (0,r), (1,r-1), \cdots, (r,0) \}$, $W^\Pi_d(C,p,q) = W^r_d(C)$, $\Ess(\Pi) = \{(0,0)\}$, and $\widetilde{W}^\Pi_d(C,p,q) = W^r_d(C) \backslash W^{r+1}_d(C)$. The choice of marked points is immaterial in this case.

The expected dimension of $W^\Pi_d(C,p,q)$ is the following number. 
\begin{eqnarray*}
\rho_g(d, \Pi) &=& g - (r+1)(g-d+r)
 - \sum_{i=0}^r (a_i-i) - \sum_{i=0}^r (b_i-i) \\
&&- \# \{ (a,b), (a',b') \in \Pi: a < a' \mbox{ and } b < b'\}.
\end{eqnarray*}
Here $a_0, \cdots, a_r$ and $b_0, \cdots, b_r$ denote the row and column sequences, and $r = |\Pi| -1$.
Dot patterns $\Pi \subset \NN^2$ with $| \Pi | \geq d+1-g$ are in bijection with a type of permutation of $\ZZ$ that we call \emph{$(d,g)$-confined permutations} and describe in Section \ref{ss:dgConfined}. The $(d,g)$-confined permutation $\pi$ associated to $\Pi$ has the feature that $\omega_{d-g} \pi$ has finite length, and in fact this length is the expected codimension: $\rho_g(d,\Pi) = g - \ell(\omega_{d-g} \pi)$. 

\begin{thm}
\label{thm:bnDegen}
Fix positive integers $g,d$ and an algebraically closed field $k$. Let $(C,p,q)$ be a general twice-marked smooth curve of genus $g$. For any dot array $\Pi$ such that $|\Pi| \geq d+1-g$, $W^\Pi_d(C,p,q)$ is nonempty if any only if $\rho_g(d,\Pi) \geq 0$. If it is nonempty, it has pure dimension $\rho_g(d, \Pi)$, and the open subscheme $\widetilde{W}^\Pi_d(C,p,q)$ is smooth and dense.

Let $\pi$ be the $(d,g)$-confined permutation associated to $\Pi$, and let $R(\omega_{d-g} \pi)$ denote the set of reduced words for the permutation $\omega_{d-g} \pi$ (or equivalently, saturated chains from the identity to $\omega_{d-g} \pi$ in the Bruhat order). Then the Chow class of $W^\Pi_d(C,p,q)$ is
$$\left[ W^\Pi_d(C,p,q) \right] = \frac{ |R(\omega_{d-g} \pi)| }{\ell(\omega_{d-g} \pi )!} \Theta^{\ell(\omega_{d-g} \pi)}.$$
\end{thm}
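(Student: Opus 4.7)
The plan is to leverage the versality theorem (Theorem~\ref{thm:versality}) to transfer known properties of matrix (double) Schubert varieties to the Brill-Noether degeneracy loci. The first step is to reinterpret $W^\Pi_d(C,p,q)$ scheme-theoretically as a degeneracy locus for the pair of flags $(\cpb_d,\cqb_d)$: since $(\cP^a_d\cap\cQ^b_d)_{[L]} \cong H^0(C,L(-ap-bq))$, the defining Fitting-ideal conditions are exactly rank-drop conditions on intersections of the two flags. Under the bijection alluded to in the excerpt, the combinatorial data $r^\Pi(\cdot,\cdot)$ is encoded by the $(d,g)$-confined permutation $\pi$, and the relevant ``ambient'' permutation is $w=\omega_{d-g}\pi$, whose length satisfies $\ell(w) = g - \rho_g(d,\Pi)$.

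Next, I apply versality: by Theorem~\ref{thm:versality}, étale-locally at every point of $\Pic^d(C)$ there is a smooth morphism to a product of two flag varieties pulling the tautological pair of flags back to $(\cpb_d,\cqb_d)$, and under it $W^\Pi_d(C,p,q)$ is the preimage of the two-flag Schubert variety $X_w$ while $\widetilde{W}^\Pi_d(C,p,q)$ is the preimage of the open subvariety of $X_w$ cut out only by the conditions at $\Ess(\Pi)$. Classical results about Schubert varieties (Fulton, Knutson-Miller, Lascoux-Schützenberger) then give everything in the first half of the theorem: $X_w$ is irreducible and Cohen-Macaulay of codimension $\ell(w)$; its singular locus consists of the Schubert subvarieties indexed by permutations strictly below $w$ in Bruhat order; and the essential-set open part is smooth. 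These properties all descend through smooth morphisms, yielding the claimed dimension, pure-dimensionality, and smoothness/density of $\widetilde{W}^\Pi_d(C,p,q)$.

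For the Chow class, I would use the Fulton/Kempf-Laksov degeneracy-locus formula: the class $[W^\Pi_d(C,p,q)]$ is obtained by evaluating the double Schubert polynomial $\mathfrak{S}_w(x;y)$ on the Chern roots of the two filtrations $\cpb_d$ and $\cqb_d$. The crucial observation is that each graded piece $\cP^a/\cP^{a+1}$ is (up to twist) the restriction of the Poincaré bundle to $\{p\}\times\Pic^d(C)$, hence algebraically trivial; similarly for $\cqb_d$ and $q$. Therefore in the numerical Chow ring of $\Pic^d(C)$, the only surviving contributions are those involving the total Chern classes of $\cH_d$, and these collapse to powers of $\Theta$ via the Poincaré formula $c(\cH_d) = e^{-\Theta}$. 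What remains is a Schubert-polynomial identity expressing a certain specialization of $\mathfrak{S}_w$ as $\frac{|R(w)|}{\ell(w)!}\,\Theta^{\ell(w)}$; this is the Macdonald/Stanley-type reduced-word identity. Nonemptiness then follows from nonvanishing of $\Theta^{\ell(w)}$ in $H^*(\Pic^d(C))$ precisely when $\ell(w)\leq g$, i.e., $\rho_g(d,\Pi)\geq 0$, together with the fact that the local model $X_w$ is nonempty.

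The main obstacle is the Chow class calculation, and specifically matching up three different combinatorial/cohomological ingredients: the double Schubert polynomial on a product of flag varieties, the trivialization of graded-piece Chern classes on the Jacobian, and the Stanley identity that extracts the factor $|R(w)|/\ell(w)!$. While each ingredient is individually standard (and the $321$-avoiding case is treated in~\cite{actKclasses}), handling arbitrary permutations of finite length requires care in bookkeeping the surviving terms of $\mathfrak{S}_w(x;y)$ after the many cancellations produced by triviality of the graded Chern classes, and in verifying that the specialization really does produce the clean coefficient $|R(w)|/\ell(w)!$ rather than some Schubert-polynomial evaluation not a priori recognizable as a reduced-word count.
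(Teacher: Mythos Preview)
Your overall strategy matches the paper's: identify $W^\Pi_d(C,p,q)$ with a flag degeneracy locus, invoke versality to import Schubert-variety facts for the local statements, and compute the Chow class via Fulton's formula and a reduced-word identity for Schubert polynomials. But there is a genuine gap in your setup that the paper has to work to fix.

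You propose to realize $W^\Pi_d(C,p,q)$ as a degeneracy locus of the flags $\cpb_d,\cqb_d$ in $\cH_d$ on $\Pic^d(C)$. These objects are not defined for arbitrary $d$: the bundle $\cH_d$ only exists when $d\geq 2g-1$ (otherwise $h^0(C,L)$ is not locally constant), and even then the strata $\cP^a_d,\cQ^b_d$ only exist for $0\leq a,b\leq d-2g+1$, which need not cover the entries of $\Ess(\Pi)$. The paper resolves this by twisting: one chooses $M,N\geq 2g-1$, sets $d'=d+M+N$, and identifies $W^\Pi_d(C,p,q)$ with $\tw_{-Mp-Nq}\bigl(D_{\pi'}(\cpb_{d'};\cqb_{d'})\bigr)$ for a shifted permutation $\pi'=\alpha_N\pi\alpha_M^{-1}$ of $[0,d'-g]$. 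Making this precise requires the combinatorics of $(d,g)$-confined permutations (extending $\Pi$ outside $\NN^2$ so that rank conditions at negative $a,b$ are automatically the Riemann--Roch values), comparing $\Ess(\Pi)$ with $\Ess(\pi')$, and checking that $\pi'$ is compatible with the corank set $[0,d'-2g+1]$ of the degree-$d'$ flags. None of this is visible in your outline, and without it the degeneracy-locus description and the appeal to versality are not well-posed.

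This twist also feeds directly into the step you flag as the main obstacle. The paper's Schubert-polynomial lemma (evaluating $\sch_{w^{-1}}$ at Chern roots satisfying $e_k=\Theta^k/k!$ to obtain $|R(w)|\,\Theta^{\ell(w)}/\ell(w)!$) is proved via the Billey--Jockusch--Stanley expansion together with the exponential substitution, and it crucially uses the hypothesis that $w^{-1}$ is increasing on $[1,g]$. That monotonicity is exactly what the shifted permutation $\pi'$ provides (via the decreasing behavior of $\pi'^{-1}$ on $[d'-2g+1,\infty)$ guaranteed by the twist construction); it does not come for free from an arbitrary finite-length permutation. So the bookkeeping you omit is not cosmetic: it supplies the hypothesis that makes the class computation go through.
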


We will use the versality theorem to prove Theorem \ref{thm:bnDegen}. This analysis occupies Section \ref{sec:bnDegen}.

\subsection{Related work on Brill-Noether theory of marked curves}
\label{ss:bnBackground}
The present work builds upon a number of previous works on Brill-Noether theory of marked curves. The theorems related to this work fall under three types: dimension theorems, smoothness theorems, and enumerative theorems; all three problems can be studied for curves of various numbers of marked points. One aspect of classical Brill--Noether theory we do not address is \emph{connectedness}; for unmarked curves, connectedness of $W^r_d(C)$ follows from the main theorem of \cite{flConnectedness}, which does not apply in this situation. It is possible that a degeneration proof, along the lines of Osserman's proof of the connectedness of $W^r_d(C)$ \cite{ossermanConnectedness}, may work.

Eisenbud and Harris \cite{eh86} proved an extended form of the classical dimension theorem to curves with $n > 0$ marked points, with a characteristic $0$ hypothesis. This extended dimension theorem is false in positive characteristic when $n > 2$. However, the theorem is true in all characteristics when $n \leq 2$; a simple proof appears in \cite{ossSimpleBN}. The question of smoothness is considered, for $n=1$, in \cite{cht}, via a Petri map. In the case $n = 2$ and in all characteristics, a smoothness theorem was proved in \cite{cop}, by degeneration rather than using a Petri map. A smoothness theorem for the $n=2$ case via the injectivity of a Petri map was proved independently in \cite{teixidorBNTwo}, which appeared on arXiv five months after this paper. Our smoothness result departs from these results in part by considering not just ramification conditions at the two marked points, but also conditions on the interaction between the two marked points.

The enumerative geometry of Brill-Noether varieties began in Castelnuovo's calculation in 1889 in the case $\rho=0$; a nice summary of this argument and how it has been adapted to other situations is in \cite[\S 5A]{hm}. Modern approaches have typically used techniques from the theory of degeneracy loci, beginning with the work of Kempf \cite{kempfSchubert} and Kleiman-Laksov \cite{kleimanLaksov1,kleimanLaksov2}, who computed the intersection theoretic class $[W^r_d(C)]$ via the Porteous formula. An analogous computation for curves with one marked point may be deduced from the degeneracy locus formulas in \cite{kempfLaksov73}; this computation may be found, for example, in \cite[\S 4]{nonprimitive}, and an analogous theorem for curves with a \emph{moving} marked point can be found in \cite{farkasTarasca}. The case of two marked points with imposed ramification was studied in detail by Anderson, Chen, and Tarasca \cite{actKclasses}, who used degeneracy locus formulas to obtain not just the intersection class of the Brill-Noether loci but classes in K-theory as well. A recent paper \cite{actMotivic} also considers the case of one marked point from a motivic point of view. The enumerative aspects of this paper, developed in Section \ref{sec:bnDegen}, follow similar methods to those of \cite{actKclasses}. The principle difference in this paper is that while \cite{actKclasses} considers degeneracy loci indexed by $321$-avoiding permutations, imposing conditions on combinations of marked points requires us to consider degeneracy loci indexed by arbitrary permutations (of finite length).

The main innovations of the present work are the versality point of view, and the generalization from vanishing conditions at individual points to conditions imposed on combinations of the marked points.
For example, while vanishing conditions may impose a cusp, flex point, or higher ramification condition at a single marked point, the approach in this paper may also impose nodes, bitangents, and other conditions concerning the interaction of the two marked points.

Finally, we note that there are two points of view on classical Brill--Noether theory: one may consider loci $W^r_d(C)$ of special line bundles, or parameter spaces $G^r_d(C)$ of linear series. We focus herein on the first point of view.
Although the details are not developed in this paper, the versality theorem should also be able to provide new proofs of the main results of \cite{cop} about the geometry of the varieties $G^r_d(C, (p, a_\sbu), (q, b_\sbu))$
parameterizing linear series with prescribed vanishing orders.
This potential application is also discussed in \cite[$\S$6]{cpRR} and \cite[$\S$4]{liRBS}.
These varieties have the form of \emph{relative Richardson varieties}, in the sense of \cite{cpRR}. However, the results of \cite{cpRR} on such varieties work with pairs of \emph{complete} flags, so they do not immediately apply in this situation.
Assuming that the results of \cite{cpRR} generalize to partial flags, they would quickly imply the results of \cite{cop} about the smooth locus of a twice-pointed Brill-Noether variety, and also imply that the image of the projection
$G^r_d(C, (p, a_\sbu), (q, b_\sbu)) \to \Pic^d(C)$
is equal to a Brill-Noether degeneracy locus of the type discussed in this paper. Furthermore, the techniques of \cite{liRBS} provide natural resolutions of singularities for twice-pointed Brill-Noether varieties, via \emph{relative Bott--Samelson varieties}; see \cite[Conjecture 4.4]{liRBS}.

\subsection{Conventions}
\label{ss:conventions}

By a \emph{smooth curve} we mean a smooth proper geometrically connected curve over a field $k$ of any characteristic.
An \emph{arithmetic surface} is a proper flat scheme over a discrete valuation ring whose generic fiber is a smooth curve.
A \emph{point} refers to a $k$-valued point unless otherwise stated. A \emph{twice-marked smooth curve} $(C,p,q)$ is a smooth curve with two distinct marked points. For a twice-marked curve $(C,p,q)$, we denote the punctured curve $C \backslash \{p,q\}$ by $C^\ast$, where the marked points $p,q$ will be clear from context.

To simplify our notation, intervals $[a,b], (-\infty, b], [a, \infty)$ will always refer to sets of integers $[a,b] \cap \ZZ, (\infty, b] \cap \ZZ, [a, \infty) \cap \ZZ$, and the symbol $\NN$ will denote the set of \emph{nonnegative} integers.

A \emph{(partial) flag} in a vector bundle $\cH$ (or vector space) is a sequence of nested subbundles (or subspaces) $\cP^{a_0} \supset \cdots \supset \cP^{a_\ell}$, where the subscript always denotes corank (i.e. the rank of $\cH / \cP^a$ is $a$). The set $A = \{a_0, \cdots, a_\ell\}$ is called the set of \emph{coranks} of the flag. A \emph{complete flag} is a flag with set of coranks $[0,n]$, where $n$ is the rank of $\cH$. 

If $\alpha: \ZZ \xrightarrow{\sim} \ZZ$ is a permutation fixing all but finitely many $n \in \ZZ$, then the \emph{length} of $\alpha$ is $\ell(\alpha) = \# \{ (a,b) \in \ZZ: a < b \mbox{ and } \alpha(a) > \alpha(b) \}$. The length is also the minimum length of a factorization of $\alpha$ into simple transpositions; such a factorization is called a \emph{reduced word} for $\alpha$. 

Denote by $\omega_m$ the descending permutation $\omega_m(n) = m-n$, which we will sometimes also regard as a permutation of $[0,m-1]$ rather than all of $\ZZ$. 

\subsection{Outline of the paper}
Section \ref{sec:degen} carries out the degeneration argument needed to prove Theorem \ref{thm:coupledPetri}. Section \ref{sec:flags} gives some preliminary material about relative positions of flags. Section \ref{sec:versality} is concerned with first-order deformations of a pair of flags, and builds the link between versality of Brill-Noether flags and the coupled Petri map. Section \ref{sec:proofs} completes the proofs of Theorems \ref{thm:versality} and \ref{thm:coupledPetri}. Section \ref{sec:bnDegen} applies the versality theorem to the analysis of Brill-Noether degeneracy loci and proves Theorem \ref{thm:bnDegen}.

\begin{rem}
This paper was substantially shortened during the editorial process. If some cases, more detailed proofs and explanations can be found in arXiv version $2$.
\end{rem}

\section{Degeneration to chains of twice-marked curves}
\label{sec:degen}

This section is largely independent of the rest of the paper; its object may be summarized by the slogan: ``twice-marked curves satisfying the coupled Petri condition can be chained together.'' The approach is based on \cite{ehgp}, with two main adaptations. First, we degenerate to a chain of genus-$1$ curves, rather than a flag curve; this is similar to the approach taken in \cite{welters}, and is analogous to the tropical proof of the Gieseker--Petri theorem \cite{jpGP}. In fact, we formulate our degeneration in a more flexible way. Any chain of smooth curves, all satisfying the coupled Petri condition when the attachment points are marked, will do. Second, we work throughout with sections of line bundles on twice-punctured curves; this adds the flexibility to consider the fully coupled Petri condition all at once, and also simplifies the argument somewhat.
\begin{figure}
\centering
\begin{tikzpicture}[thick]
\coordinate (p) at (0,0);
\coordinate (q) at (0,-5);
\coordinate (p1) at (5,0);
\coordinate (p2) at (5,-1);
\coordinate (p3) at (5,-2);
\coordinate (p4) at (5,-3);
\coordinate (p5) at (5,-4);
\coordinate (q5) at (5,-5);
\coordinate (eta) at (0,-7);
\coordinate (zero) at (5,-7);

\node [fill=black, circle, inner sep=2pt, label=above left:$P_\eta$] at (p) {};
\node [fill=black,circle, inner sep=2pt, label=above left:$Q_\eta$] at (q) {};
\node [fill=black,circle, inner sep=2pt, label=above left:$p_1$] at (p1) {};
\node [fill=black,circle, inner sep=2pt, label=left:{$q_1=p_2$}] at (p2) {};
\node [fill=black,circle, inner sep=2pt, label=left:{$q_2 = p_3$}] at (p3) {};
\node [fill=black,circle, inner sep=2pt, label=left:{$q_3 = p_4$}] at (p4) {};
\node [fill=black,circle, inner sep=2pt, label=left:{$q_{\ell-1} = p_\ell$}] at (p5) {};
\node [fill=black,circle, inner sep=2pt, label=below left:$q_\ell$] at (q5) {};

\node [fill=black,circle, inner sep=2pt, label=below:$\eta$] at (eta) {};
\node [fill=black,circle, inner sep=2pt, label=below:$0$] at (zero) {};

\draw [shorten <= -0.5cm, shorten >= -0.5cm] (p) to[in=-170, out=-10] node[midway, above] {$P = A_0$} (p1);
\draw [shorten <= -0.5cm, shorten >= -0.5cm] (q) to[in=-170, out=-10] node[midway, above] {$Q = A_{\ell+1}$} (q5);

\draw [shorten <= -0.5cm, shorten >= -0.5cm] ([xshift=-0.05cm]p) to[in=45, out=-135] node[midway, right] {$\cC_\eta$} ([xshift=0.05cm]q);

\draw [shorten <= -0.2cm, shorten >= -0.2cm] (p1) to[out=-135, in=135] node[midway,right] {$A_1$} (p2);
\draw [shorten <= -0.2cm, shorten >= -0.2cm] (p2) to[out=-135, in=135] node[midway,right] {$A_2$} (p3);
\draw [shorten <= -0.2cm, shorten >= -0.2cm] (p3) to[out=-135, in=135] node[midway,right] {$A_3$} (p4);
\draw [shorten <= -0.2cm, shorten >= -0.2cm] (p5) to[out=-135, in=135] node[midway,right] {$A_\ell$} (q5);
\path ([yshift=0.25cm]p4) -- node[midway] {$\vdots$} (p5);

\draw[->] (2.5, -5.4) -- (2.5, -6.8);

\draw [shorten <= -0.4cm, shorten >= -0.4cm] (eta) -- node[midway, below] {$\Spec R$} (zero);

\end{tikzpicture}
\caption{The arithmetic surface $\cC \to \Spec R$ in Situation \ref{sit:chain}.}
\label{fig:chain}
\end{figure}

\begin{sit}
\label{sit:chain} 
Let $R$ be a discrete valuation ring, with residue field $k$, uniformizer $t$, and fraction field $K$. Assume that $k$ is algebraically closed.
Let $\pi: \cC \to \Spec R$ be a regular arithmetic surface, with two disjoint sections $P, Q$. We sometimes also denote $P$ by $A_0$ and $Q$ by $A_{\ell+1}$. Assume that the special fiber $\cC_k$ is a chain of $\ell$ smooth curves $A_1 \cup \cdots \cup A_\ell$, where $A_n$ and $A_{n+1}$ meet nodally at a point that we denote by both $q_n$ and $p_{n+1}$.
Assume $P$ meets the special fiber at a point $p_1 \in A_1$ distinct from $q_1$, and $Q$ meets the special fiber at a point $q_\ell \in A_\ell$ distinct from $p_\ell$.

Denote by $\cC^\ast$ the complement of $P \cup A_1 \cup \cdots \cup A_\ell \cup Q = A_0 \cup \cdots \cup A_{\ell+1}$, and by $\cU_n$ the complement of $A_0 \cup \cdots \cup A_{n-1} \cup A_{n+1} \cup \cdots \cup A_{\ell+1}$. Also denote by $A_n^\ast$ the punctured smooth curve $A_n \backslash \{p_n, q_n\}$ (for $1 \leq n \leq \ell$).
The generic fiber $\cC \times_R \Spec K$ will be denoted $C_\eta$, and the geometric generic fiber $\cC \times_R \Spec \overline{K}$ will be denoted by $C_{\overline{\eta}}$. 
If $\cL,\cM$ are line bundles on $\cC$, we will write $L_n, M_n$ to denote the restrictions of these line bundles to $A_n$.
\end{sit}

\begin{thm}
\label{thm:petriSmoothing}
In Situation \ref{sit:chain}, if each $(A_n, p_n, q_n)$ satisfies the coupled Petri condition, then $(C_\be, P_\be, Q_\be)$ satisfies the coupled Petri condition.
\end{thm}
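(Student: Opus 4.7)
The plan is to mimic the Eisenbud--Harris proof of the Gieseker--Petri theorem \cite{ehgp}, adapted to the coupled setting and to a chain of arbitrary smooth twice-marked curves. I argue by contradiction: assume there exist a line bundle $L$ on $C_\be$ and a nonzero $\zeta = \sum_j \sigma_j \otimes \tau_j \in T^L_{P_\be, Q_\be}$ with $\mu^L(\zeta) = 0$. After replacing $R$ by a finite extension (harmless, since the hypothesis on each $(A_n,p_n,q_n)$ is geometric), I may assume $L = L_\eta$ is defined over the generic fiber. Regularity of $\cC$ lets me extend $L_\eta$ to a line bundle $\cL$ on $\cC$; this extension is unique only up to twisting by central-fiber components $\cL \leadsto \cL(\sum_n c_n A_n)$, and exploiting this flexibility will be essential.

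Each $\sigma_j$ and $\tau_j$ extends uniquely to a rational section of $\cL$ (resp.\ $\omega_{\cC/R}\otimes\cL^\vee$) on $\cC$, with well-defined orders $v_n(\sigma) := \on{ord}_{A_n}(\sigma)$ along each central-fiber component $A_n$. After scaling by a suitable power of a uniformizer $t$, I may assume $\zeta$ lies in the $R$-lattice of integral coupled tensors on $\cC$ and is \emph{primitive}: not divisible by $t$. For each $n$ set
\[
m_n := \min_j\bigl(v_n(\sigma_j) + v_n(\tau_j)\bigr), \qquad \zeta^{(n)} := (t^{-m_n}\zeta)\big|_{A_n}.
\]
Using the standard local model $xy = t$ at the node $p_n = q_{n-1}$, the restriction $(t^{-v_n(\sigma_j)}\sigma_j)|_{A_n}$ is a rational section of $L_n := \cL|_{A_n}$ whose vanishing orders at $p_n$ and $q_n$ are determined by the jumps $v_{n-1}(\sigma_j)-v_n(\sigma_j)$ and $v_{n+1}(\sigma_j)-v_n(\sigma_j)$ in the chain of orders (with the convention that $v_0$ and $v_{\ell+1}$ refer to the orders at $P$ and $Q$ for the boundary components). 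Hence $\zeta^{(n)}$ is a coupled tensor in $T^{L_n}_{p_n, q_n}(S_n)$ for some finite $S_n\subset\ZZ^2$.

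The identity $\mu^L(\zeta) = 0$ on $C_\eta$ promotes to $\mu^\cL(\zeta) = 0$ in $H^0(\cC, \omega_{\cC/R})$, because this $R$-module is $t$-torsion-free, and restricts further to $\mu^{L_n}_{p_n, q_n}(\zeta^{(n)}) = 0$ on each $A_n$. The coupled Petri hypothesis on $(A_n, p_n, q_n)$ then forces $\zeta^{(n)} = 0$ for every $n$.

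The main obstacle is to produce a choice of extension $\cL$ (equivalently, a twist $\sum c_n A_n$) for which \emph{some} $\zeta^{(n)}$ is nonzero, yielding the contradiction. Primitivity of $\zeta$ only guarantees that its reduction modulo $t$ is nonzero as a tensor on the nodal curve $\cC_k$; \emph{a priori} it could restrict to zero on every individual component if it is ``supported along node loci.'' However, a tensor that vanishes along every component after some twist of $\cL$ can always be rewritten, by shifting the twist into the neighboring component at each node, as $t$ times a tensor for a different extension $\cL(\sum c_n A_n)$ — so universal vanishing would contradict primitivity across all extensions. The precise argument is a combinatorial bookkeeping at each node, balancing jumps of the $v$-sequences against the twists available, and constitutes the technical heart of the proof. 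Once some $\zeta^{(n)}\neq 0$ has been secured, the coupled Petri condition on $(A_n,p_n,q_n)$ gives $\zeta^{(n)} = 0$, a contradiction.
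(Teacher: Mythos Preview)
Your framework is right in spirit, but there are two genuine gaps, and the paper's argument resolves them differently from what you sketch.

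\textbf{The claim that $\zeta^{(n)}$ is a coupled tensor on $A_n$.} You define $m_n = \min_j(v_n(\sigma_j) + v_n(\tau_j))$ for a \emph{fixed} expansion of $\zeta$ and then assert $\zeta^{(n)} \in T^{L_n}_{p_n,q_n}(S_n)$. But membership in the coupled tensor space requires \emph{both} $\nu_{p_n}(\zeta^{(n)}) \geq 0$ and $\nu_{q_n}(\zeta^{(n)}) \geq 0$; a single expansion controls the jump across one neighboring node but not both at once. The paper handles this by defining $\nu_n(\rho)$ intrinsically as the supremum over \emph{all} expansions of $\min_j(\nu_n(\sigma_j)+\nu_n(\tau_j))$, and then proving, via a short theory of ``adapted bases'' (Lemmas~\ref{lem:adaptedOne}--\ref{lem:twoNu}), that for any two indices $m,n$ there is a single expansion simultaneously achieving the bounds for $\nu_m(\rho)$ and $\nu_n(\rho)$. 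This yields Proposition~\ref{prop:nuRestriction}: if $\nu_n(\rho)=0$ then $\rho|_{A_n^\ast}$ is nonzero with $\nu_{p_n}(\rho|_{A_n^\ast}) \geq \nu_{n-1}(\rho)$ and $\nu_{q_n}(\rho|_{A_n^\ast}) \geq \nu_{n+1}(\rho)$. You have not supplied anything equivalent, and without it the step ``$\zeta^{(n)}$ is a coupled tensor'' is unjustified.

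\textbf{The ``main obstacle.''} You propose to vary the extension $\cL \leadsto \cL(\sum c_n A_n)$ until some $\zeta^{(n)}$ is nonzero, calling this ``combinatorial bookkeeping'' and leaving it unfinished. The paper does not do this at all. It fixes one extension, normalizes at \emph{every} $n$ by setting $\rho_n = t^{-\nu_n(\rho)}\rho$ so that $\rho_n|_{A_n^\ast}$ is automatically nonzero, and then uses injective coupling on each $A_n$ together with Proposition~\ref{prop:nuRestriction} to obtain the dichotomy: either $\nu_{n-1}(\rho_n) < 1$ or $\nu_{n+1}(\rho_n) < 0$. An asymmetric twist by $-p_n$ (with $\cM=\omega_{\cC/R}(P+Q)\otimes\cL^\vee$, converted back via Lemma~\ref{lem:extraQHarmless}) starts the chain with $\nu_P(\rho) \geq 1$; these inequalities then propagate along the chain and force $\nu_Q(\rho) < 0$, contradicting $\rho$ being a coupled tensor on the generic fiber. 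So the paper replaces your search for one good component with a propagation of inequalities across all components---and this propagation is precisely the ``technical heart'' you deferred.
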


It will simplify notation slightly and add no additional difficulty to work in a slightly more general situation. If $\cL, \cM$ are line bundles on $\cC$, denote by $\mu$ the cup product map
$$\mu: H^0(\cC^\ast, \cL) \otimes_R H^0(\cC^\ast, \cM) \to H^0(\cC^\ast, \cL \otimes \cM).$$
We will define a notion of coupled tensors for this multiplication map and prove in Theorem \ref{thm:icdegen} an analog of Theorem \ref{thm:petriSmoothing} in this setting, from which we will deduce Theorem \ref{thm:petriSmoothing}.

\subsection{Valuation of tensors}
\label{ss:valTensor}

In Situation \ref{sit:chain}, the divisors $P = A_0, A_1, \cdots, A_\ell, A_{\ell+1} = Q$ determine valuations $\nu_0, \cdots, \nu_{\ell+1}$ on the function field of $\cC$. If $\cL$ is a line bundle on $\cC$, then $\nu_n$ also naturally determines a map $\nu_n: H^0( \cC^\ast, \cL) \to \ZZ \cup \{ \infty \}$ giving the vanishing order of a rational section along $A_n$. Following \cite{ehgp}, we extend $\nu_n$ to tensors, as follows. If $\cL,\cM$ are two line bundles on $\cC$, then we define
$$\nu_n: H^0(\cC^\ast, \cL) \otimes_R H^0(\cC^\ast, \cM) \to \ZZ \cup \{ \infty \}$$
by setting $\nu_n(\rho)$ to the maximum, taken over all expansions $\rho = \sum \sigma_i \otimes \tau_i$ into rank-$1$ tensors, of $\min_i\{ \nu_n(\sigma_i) + \nu_n(\tau_i) \}$. Importantly, $\nu_n(\rho)$ is \emph{not} always equal to $\nu_n( \mu(\rho))$.
In like fashion, we extend the valuations $\nu_{p_n}, \nu_{q_n}$ to maps $H^0(A_n^\ast, L_n) \otimes_k H^0(A_n^\ast, M_n) \to \ZZ \cup \{ \infty \}$.

For $n \in [1,\ell]$, $\nu_n |_R$ is the discrete valuation on $R$, so for all $a \in \ZZ$,
$$\big\{ \rho \in H^0(\cC^\ast, \cL) \otimes_R H^0(\cC^\ast, \cM):\ \nu_n(\rho) \geq a \big\} = t^a H^0(\cU_n, \cL) \otimes_R H^0(\cU_n, \cM).$$
On the other hand, if $n \in \{0, \ell+1\}$, then $\nu_n$ induces the trivial valuation on $R$. In general, 
\begin{equation}
\label{eq:nuMult}
\nu_n( t^a \rho ) = \begin{cases}
a + \nu_n(\rho) & \mbox{ if } 1 \leq n \leq \ell\\
\nu_n(\rho) & \mbox{ if } n \in \{0,\ell+1\}.
\end{cases}
\end{equation}

There is a restriction map
$H^0(\cU_n, \cL) \otimes_R H^0(\cU_n, \cM) \to H^0(A_n^\ast, L_n) \otimes_k H^0(A_n^\ast, M_n),$
for all $1 \leq n \leq \ell$, the kernel of which consists of all tensors divisible by $t$, i.e. all $\rho$ with $\nu_n(\rho) \geq 1$. We denote the image of a tensor $\rho$ under this homomorphism by $\rho |_{A_n^\ast}$; this is nonzero if $\nu_n(\rho) = 0$.

\begin{prop}
\label{prop:nuRestriction}
Let $\rho \in H^0(\cC^\ast, \cL) \otimes_R H^0(\cC^\ast, \cM)$, and let $n \in [1,\ell]$. If $\nu_n(\rho) = 0$, then the restriction $\rho |_{A_n^\ast}$ is nonzero. It satisfies
$\nu_{p_n} \left( \rho |_{A_n^\ast} \right) \geq \nu_{n-1}(\rho)$\; and\; $\nu_{q_n} \left( \rho |_{A_n^\ast} \right) \geq \nu_{n+1}(\rho).$
\end{prop}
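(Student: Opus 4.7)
The claim has two parts: nonvanishing of $\rho|_{A_n^\ast}$ and two valuation bounds. The nonvanishing follows directly from the filtration formula recalled just before the statement. Indeed, $\nu_n(\rho) = 0$ means $\rho \in H^0(\cU_n, \cL) \otimes_R H^0(\cU_n, \cM)$ but not in $t$ times the same module. The restriction-to-$A_n^\ast$ map factors as reduction modulo $t$ (with kernel exactly $t$ times the source) followed by the tensor product of the injections $H^0(\cU_n, \cL) \otimes_R k \hookrightarrow H^0(A_n^\ast, L_n)$ (and analogously for $\cM$); since tensor products of injections of $k$-vector spaces remain injective, $\rho|_{A_n^\ast}$ is nonzero.

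For the valuation bounds, by symmetry it suffices to prove $\nu_{p_n}(\rho|_{A_n^\ast}) \geq \nu_{n-1}(\rho)$. I would reduce to a local computation at the node $p_n$. Since $\cC$ is regular, the formal completion $\hat{\cO}_{\cC, p_n}$ is isomorphic to $R[[u,v]]/(uv - t)$ with $A_{n-1} = \{u = 0\}$ and $A_n = \{v = 0\}$. After trivializing $\cL$, the completed localization of $H^0(\cU_n, \cL)$ at $p_n$ becomes $R[[u]][u^{-1}]$ via $v \mapsto tu^{-1}$, and restriction to $A_n^\ast$ corresponds to reduction modulo $t$ into $k((u))$. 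For a local section $\sigma = \sum_i a_i u^i$ with $a_i \in R$ one computes $\nu_n(\sigma) = \min_i \nu_R(a_i)$, $\nu_{n-1}(\sigma) = \min_i(\nu_R(a_i) + i)$, and $\nu_{p_n}(\sigma|_{A_n^\ast}) = \min\{i : \nu_R(a_i) = 0\}$. Taking $i^\ast$ achieving the last minimum gives $\nu_R(a_{i^\ast}) + i^\ast = i^\ast$, so $\nu_{n-1}(\sigma) \leq \nu_{p_n}(\sigma|_{A_n^\ast})$: the pure-section inequality.

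To pass to tensors, the plan is to produce a decomposition $\rho = \sum_i \sigma_i \otimes \tau_i$ with $\sigma_i \in H^0(\cU_n, \cL)$, $\tau_i \in H^0(\cU_n, \cM)$, and $\nu_{n-1}(\sigma_i) + \nu_{n-1}(\tau_i) \geq \nu_{n-1}(\rho)$ for every $i$. Granted such a decomposition, the pure-section inequality applied termwise together with the definition of the tensor valuation give $\nu_{p_n}(\rho|_{A_n^\ast}) \geq \nu_{n-1}(\rho)$.

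The main obstacle is producing this compatible decomposition: the definition of $\nu_{n-1}(\rho) = m$ only guarantees a decomposition inside the larger tensor product $H^0(\cC^\ast, \cL) \otimes_R H^0(\cC^\ast, \cM)$ whose factors may have negative $\nu_n$ and so fail to restrict to $A_n^\ast$. I would start from such a decomposition, rescale each factor by writing $\sigma_i = t^{c_i} \sigma_i'$ and $\tau_i = t^{d_i} \tau_i'$ with $\nu_n(\sigma_i') = \nu_n(\tau_i') = 0$, and then use the hypothesis $\rho \in H^0(\cU_n, \cL) \otimes_R H^0(\cU_n, \cM)$ to force cancellation of the negative $t$-powers upon regrouping. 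Equivalently, one must show that the $\nu_{n-1}$ filtration is strict with respect to the inclusion $H^0(\cU_n, \cL) \otimes_R H^0(\cU_n, \cM) \hookrightarrow H^0(\cC^\ast, \cL) \otimes_R H^0(\cC^\ast, \cM)$, which I expect to follow from the local model above together with the $R$-flatness of the relevant sections modules.
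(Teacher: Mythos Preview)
Your identification of the nonvanishing and of the pure-section inequality $\nu_{p_n}(\sigma|_{A_n^\ast}) \geq \nu_{n-1}(\sigma)$ is correct; the paper obtains the latter more cleanly from $\cO_{\cC}(A_{n-1})|_{A_n} \cong \cO_{A_n}(p_n)$ rather than a formal-local computation, but your version would work. You also correctly isolate the heart of the matter: one needs a single expansion $\rho = \sum \sigma_i \otimes \tau_i$ that simultaneously witnesses $\nu_n(\rho) \geq 0$ and $\nu_{n-1}(\rho) \geq m$.

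The gap is that you do not actually produce such an expansion. Your proposal---rescale by powers of $t$ to force $\nu_n(\sigma_i') = \nu_n(\tau_i') = 0$ and then ``use the hypothesis to force cancellation of the negative $t$-powers''---does not work as stated: after rescaling, the terms $t^{c_i+d_i} \sigma_i' \otimes \tau_i'$ with $c_i + d_i < 0$ must cancel modulo $H^0(\cU_n,\cL) \otimes H^0(\cU_n,\cM)$, but there is no reason the resulting rewritten sum still witnesses the $\nu_{n-1}$ bound termwise. The reformulation as strictness of the $\nu_{n-1}$-filtration on the inclusion of tensor products is the right statement, but it is not a formal consequence of flatness or of the local model at $p_n$; the filtration is defined via global sections on all of $\cC^\ast$, and the modules involved are not finitely generated over $R$.

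The paper supplies exactly the missing tool: \emph{adapted bases}. One shows (by a Gaussian-elimination argument) that any finite-dimensional $K$-subspace $V \subseteq H^0(\cC^\ast, \cL)$ admits a basis simultaneously adapted to any two of the valuations $\nu_0, \dots, \nu_{\ell+1}$, meaning that on such a basis each $\nu_m$ is computed as the minimum over basis elements. Expanding $\rho$ in the tensor product of two such doubly-adapted bases (one for each factor) and comparing with the two given expansions forces each coefficient to satisfy both valuation bounds, yielding the simultaneous decomposition. This is the content of the paper's Lemma on ``two $\nu$'s'' (Lemma~\ref{lem:twoNu}), and it is the step your proposal is missing.
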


\subsection{Adapted bases}
Proposition \ref{prop:nuRestriction} is plausible enough, but it requires a bit of algebra to prove.
Namely, we make use of a special type of basis for finite-dimensional subspaces of sections of line bundles on $\cC^\ast$. These bases play the role of the ``compatible bases'' used in \cite[Lemma 1.2]{ehgp}.

\begin{defn}
\label{def:adapted}
In Situation \ref{sit:chain}, let $\cL$ be a line bundle on $\cC$, and suppose $n \in [0,\ell+1]$. Let $S \subseteq H^0(\cC, \cL)$, and let $V = \operatorname{span} S$. The set $S$ is called \emph{adapted to $A_n$} if it is linearly independent (over $K$), and for all $a \in \ZZ$, the set $\{ c \sigma:\ c \in K,\ \sigma \in S,\ \nu_n(c \sigma) \geq a \}$ generates the $R$-module $V_a = \{v \in V:\ \nu_n(v) \geq a\}$.
\end{defn}

Adapted bases are convenient for evaluating valuations of sections, since
\begin{equation}
\label{eq:adaptedValuation}
\nu_n\left( \sum c_i \sigma_i \right) = \min \{ \nu_n(c_i \sigma_i) \}.
\end{equation}

\begin{lemma}
\label{lem:adaptedTwo}
Fix a finite-dimensional subspace $V \subseteq H^0(\cC^\ast, \cL)$, and let $m,n \in [0,\ell+1]$. There exists a basis of $V$ that is adapted to both $A_m$ and $A_n$.
\end{lemma}
\begin{proof}
    Filtering by $\nu_m$, one can obtain a basis of $V$ adapted to $A_m$. Given two indices $m,n$, we may choose bases adapted to $A_m$ and $A_n$ individually, and form the change of basis matrix from one to the other. By Gaussian elimination, being careful to multiply by constants that do not lower valuation, we may modify the two bases until this matrix has only one nonzero element in each row or column.
\end{proof}

\begin{lemma}
\label{lem:twoNu}
Suppose $\rho \in H^0(\cC^\ast, \cL) \otimes_R H^0(\cC^\ast, \cM)$. If $\nu_m(\rho) \geq a$ and $\nu_n(\rho) \geq b$, then there exist $\sigma_1, \cdots, \sigma_N \in H^0(\cC^\ast, \cL)$ and $\tau_1, \cdots, \tau_N \in H^0(\cC^\ast, \cM)$ such that 
\begin{enumerate}
\item $\rho = \sum \sigma_i \otimes \tau_i$, 
\item $\nu_m(\sigma_i) + \nu_m(\tau_i) \geq a$ for all $i$, and
\item $\nu_n(\sigma_i) + \nu_n(\tau_i) \geq b$ for all $i$.
\end{enumerate}
\end{lemma}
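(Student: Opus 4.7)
The plan is to construct the desired decomposition directly from a doubly-adapted basis. Take any finite expansion $\rho = \sum_k \alpha_k \otimes \beta_k$, let $V \subseteq H^0(\cC^\ast, \cL)$ and $W \subseteq H^0(\cC^\ast, \cM)$ be the $K$-spans of the $\alpha_k$ and $\beta_k$, and apply Lemma~\ref{lem:adaptedTwo} to choose $K$-bases $\{\sigma_i\}$ of $V$ and $\{\tau_j\}$ of $W$, each adapted to both $A_m$ and $A_n$. Expanding $\rho = \sum c_{ij} \sigma_i \otimes \tau_j$ in this basis, I would propose the decomposition consisting of the nonzero terms $(c_{ij}\sigma_i) \otimes \tau_j$.

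The only thing to verify is a tensor analogue of Equation~(\ref{eq:adaptedValuation}): that the bi-adapted expansion of $\rho$ satisfies
\[
\nu_m(\rho) \;=\; \min_{c_{ij} \neq 0}\bigl(\nu_m(c_{ij}\sigma_i) + \nu_m(\tau_j)\bigr),
\]
and similarly for $\nu_n$. Granted this, the hypotheses $\nu_m(\rho) \geq a$ and $\nu_n(\rho) \geq b$ immediately force the two required valuation inequalities on every nonzero term.

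When $m \in [1,\ell]$ this is essentially automatic: Lemma~\ref{lem:adaptedOne}(2) realizes $\{\sigma_i\}$ as an $R$-basis of $V_0 \subseteq H^0(\cU_m, \cL)$ (and likewise $\{\tau_j\}$ of $W_0$), so that the expansion places $\rho$ in $t^{a_0} H^0(\cU_m, \cL) \otimes_R H^0(\cU_m, \cM)$ with $a_0 = \min_{c_{ij}\neq 0}\nu_m(c_{ij})$. The reverse inequality follows from the identification $\{\rho : \nu_m(\rho) \geq a\} = t^a H^0(\cU_m, \cL) \otimes_R H^0(\cU_m, \cM)$ established in Section~\ref{ss:valTensor}, combined with the fact that $\rho/t^{a_0}$ restricts nontrivially to $A_m^\ast$: by the injection $V_0 \otimes_R k \hookrightarrow H^0(A_m^\ast, L_m)$ the restrictions of the $\sigma_i$ are $k$-linearly independent (similarly the $\tau_j$), so their tensor products remain linearly independent and at least one coefficient survives.

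The main obstacle is the case $m \in \{0, \ell+1\}$, where $\nu_m$ is trivial on $R$ and no clean tensor-module description of $\{\nu_m \geq a\}$ is available. Here my approach would be to work with the $\nu_m$-filtration $V^p = \{v \in V : \nu_m(v) \geq p\}$, which Lemma~\ref{lem:adaptedOne}(1) diagonalizes, and to observe that the induced tensor-product filtration on $V \otimes_K W$ is diagonalized by the $\sigma_i \otimes \tau_j$, making $\min_{c_{ij}\neq 0}(\nu_m(\sigma_i) + \nu_m(\tau_j))$ the exact internal filtration value of $\rho$. To match this with $\nu_m(\rho)$ defined via arbitrary external decompositions, one absorbs any competing decomposition into enlargements $V' \supseteq V$ and $W' \supseteq W$ equipped with their own bi-adapted bases; since the $\nu_m$-filtration on $V$ is the restriction of that on $V'$, the diagonal structure on $V \otimes W$ is preserved, ruling out any improvement.
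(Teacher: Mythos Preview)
Your approach is sound but takes a harder route than the paper's. You set out to prove a tensor analogue of Equation~(\ref{eq:adaptedValuation})---that the doubly-adapted expansion of $\rho$ realizes $\nu_m(\rho)$ \emph{exactly}---and this is what drives you into the enlargement argument of the last paragraph. That argument can be completed (for instance, $V^p = V \cap V'^p$ yields an injection $\operatorname{gr}(V\otimes W) \hookrightarrow \operatorname{gr}(V'\otimes W')$ of associated gradeds for the tensor-product filtrations, forcing $F^p(V\otimes W) = (V\otimes W)\cap F^p(V'\otimes W')$), but it is more than the lemma requires. The paper instead begins with \emph{two} expansions, one $\rho = \sum \alpha_k\otimes\beta_k$ witnessing $\nu_m(\rho)\ge a$ and one $\rho = \sum \gamma_k\otimes\delta_k$ witnessing $\nu_n(\rho)\ge b$, and takes $V$ and $W$ to be the spans of all the $\alpha_k,\gamma_k$ and all the $\beta_k,\delta_k$ respectively. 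Re-expanding each $\alpha_k\otimes\beta_k$ in the doubly-adapted basis $\{\sigma_i\otimes\tau_j\}$, Equation~(\ref{eq:adaptedValuation}) for \emph{sections} alone guarantees every resulting term satisfies the $\nu_m$-bound; summing over $k$, so do the unique coefficients $f_{i,j}$. By linear independence the same $f_{i,j}$ arise from re-expanding $\sum\gamma_k\otimes\delta_k$, so they also satisfy the $\nu_n$-bound. No tensor-level valuation identity is needed, and the case distinction on $m\in\{0,\ell+1\}$ never arises.
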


\begin{proof}
By definition of $\nu_m(\rho), \nu_n(\rho)$, there exist two expansions $\rho = \sum \alpha_k \otimes \beta_k = \sum \gamma_k \otimes \delta_k$ such that $\nu_m(\alpha_k) + \nu_m(\beta_k) \geq a$ and $\nu_n(\gamma_k) + \nu_n(\delta_k) \geq b$. 
We may choose bases $\{\sigma_i\}$ of the span of all $\alpha_i$ and $\gamma_i$, and $\{\tau_j\}$ of the span of all $\beta_i$ and $\delta_i$, both adapted to both $A_m$ and $A_n$. Expanding in these bases gives $\rho = \sum f_{i,j}\ \sigma_i \otimes \tau_j$ where $\nu_m(f_{i,j}) + \nu_m(\sigma_i) + \nu_m(\tau_i) \geq a$ for all $i,j$.
\end{proof}

We now have the tools to prove Proposition \ref{prop:nuRestriction}.

\begin{proof}[Proof of Proposition \ref{prop:nuRestriction}]
Suppose that $1 \leq n \leq \ell$ and $\rho \in H^0(\cC^\ast, \cL) \otimes_R H^0(\cC^\ast, \cM)$ satisfies $\nu_n(\rho) = 0$. The claim that $\rho|_{A_n^\ast}$ is nonzero was verified in the paragraph before the proposition.
By Lemma \ref{lem:twoNu}, there exists an expansion
$\rho = \sum \sigma_i \otimes \tau_i$
such that $\nu_{n-1}(\sigma_i) + \nu_{n-1}(\tau_i) \geq \nu_{n-1}(\rho)$ and $\nu_n(\sigma_i) + \nu_n(\tau_i) \geq 0$ for all $i$. Since $\nu_n(t) = 1$, we may assume that $\nu_n(\sigma_i), \nu_n(\tau_i) \geq 0$ for all $i$. Thus we have well-defined restrictions $\sigma_i |_{A_n^\ast} \in H^0(A_n^\ast, L_n)$ and $\tau_i |_{A_n^\ast} \in H^0(A_n^\ast, M_n)$, and 
$\rho |_{A_n^\ast} = \sum (\sigma_i |_{A_n^\ast}) \otimes (\tau_i |_{A_n^\ast}).$
The restriction of $\cO_{\cC}(A_{n-1})$ to $A_n$ is equal to $\cO_{A_n}(p_n)$, and thus $\nu_{p_n}(\sigma_i |_{A_n^\ast}) \geq \nu_{n-1}(\sigma_i)$ and $\nu_{p_n}(\tau_i |_{A_n^\ast}) \geq \nu_{n-1}(\tau_i)$. Therefore $\nu_{p_n} (\rho |_{A_n^\ast}) \geq \min_i \left( \nu_{n-1}(\sigma_i) + \nu_{n-1}(\tau_i) \right) = \nu_{n-1}(\rho)$, as desired. The bound $\nu_{q_n}(\rho |_{A_n^\ast}) \geq \nu_{n+1}(\rho)$ follows by replacing $\nu_{n-1}$ with $\nu_{n+1}$.
\end{proof}

\subsection{Injective coupling}
\label{ss:injectiveCoupling}

This subsection establishes our basic inductive result. Suppose that $C \to \Spec k$ is a smooth curve over a field,
$p,q$ are distinct rational points, and $L,M$ are two line bundles on $C$. Define
$$T^{L,M}_{p,q} =  \sum_{a,b \in \ZZ} H^0(C, L(-ap-bq)) \otimes_k H^0(C, M(ap+bq)),$$
and define the \emph{coupled multiplication map} to be the combination of cup product maps
\begin{equation}
\label{eq:coupledMult}
\mu^{L,M}_{p,q}:\
T^{L,M}_{p,q}
 \to H^0(C, L \otimes M),
\end{equation}
We say that two line bundles $L,M$ have \emph{injective coupling} if $\mu^{L,M}_{p,q}$ is injective.

\begin{thm}
\label{thm:icdegen}
In Situation \ref{sit:chain}, let $\cL, \cM$ be line bundles. If $L_n(-p_n)$ and $M_n$ have injective coupling on $(A_n,p_n,q_n)$ for all $1 \leq n \leq \ell$, then $L_\eta(-P_\eta), M_\eta$ have injective coupling on $(C_\eta, P_\eta, Q_\eta)$.
\end{thm}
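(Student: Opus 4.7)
The plan is to follow the Eisenbud and Harris degeneration strategy: assume for contradiction that $\rho_\eta$ is a nonzero element in the kernel of $\mu^{L_\eta(-P_\eta), M_\eta}_{P_\eta, Q_\eta}$, spread $\rho_\eta$ to an $R$-tensor $\rho$ over $\cC$, normalize by a power of $t$, then restrict to a well-chosen component of the special fiber where the coupled Petri hypothesis produces a contradiction.

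For the spreading, decompose $\rho_\eta$ as a finite sum of terms from the summands defining $T^{L_\eta(-P_\eta), M_\eta}_{P_\eta, Q_\eta}$ and spread each term individually over $\cC$. This produces a tensor $\rho \in H^0(\cC^\ast, \cL) \otimes_R H^0(\cC^\ast, \cM)$ with generic fiber $\rho_\eta$ such that $\nu_0(\rho) \geq 1$ (reflecting the twist by $-P$) and $\nu_{\ell+1}(\rho) \geq 0$. The product $\mu(\rho)$ vanishes on the generic fiber, and $H^0(\cC^\ast, \cL \otimes \cM)$ is torsion-free over $R$ (sections of a locally free sheaf on the integral $R$-flat scheme $\cC^\ast$), so $\mu(\rho) = 0$. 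Using (\ref{eq:nuMult}) --- $\nu_n$ restricts to the $t$-adic valuation on $R$ for $n \in [1,\ell]$ while $\nu_0$ and $\nu_{\ell+1}$ are trivial on $R$ --- rescale $\rho$ by a suitable power of $t$ so that $\min_{n \in [1,\ell]} \nu_n(\rho) = 0$. This preserves $\nu_0(\rho) \geq 1$ and $\nu_{\ell+1}(\rho) \geq 0$ and ensures $\nu_n(\rho) \geq 0$ for every $n \in [0, \ell+1]$.

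Let $n^*$ be the smallest element of $[1, \ell]$ with $\nu_{n^*}(\rho) = 0$. By minimality of $n^*$ and $\nu_0(\rho) \geq 1$, we have $\nu_{n^*-1}(\rho) \geq 1$, and by the normalization $\nu_{n^*+1}(\rho) \geq 0$. Proposition \ref{prop:nuRestriction} then guarantees that $\rho|_{A_{n^*}^\ast}$ is a nonzero tensor in $H^0(A_{n^*}^\ast, L_{n^*}) \otimes_k H^0(A_{n^*}^\ast, M_{n^*})$ satisfying $\nu_{p_{n^*}}(\rho|_{A_{n^*}^\ast}) \geq 1$ and $\nu_{q_{n^*}}(\rho|_{A_{n^*}^\ast}) \geq 0$. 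Reinterpreting the first tensor factor as a section of $L_{n^*}(-p_{n^*})$ shifts its $\nu_{p_{n^*}}$ down by $1$, so by Remark \ref{rem:restrictMuDomain}(2) the restriction lies in $T^{L_{n^*}(-p_{n^*}), M_{n^*}}_{p_{n^*}, q_{n^*}}$. Restricting the identity $\mu(\rho) = 0$ to $A_{n^*}^\ast$ yields $\mu^{L_{n^*}(-p_{n^*}), M_{n^*}}_{p_{n^*}, q_{n^*}}(\rho|_{A_{n^*}^\ast}) = 0$, and the hypothesized injective coupling on $(A_{n^*}, p_{n^*}, q_{n^*})$ forces $\rho|_{A_{n^*}^\ast} = 0$, contradicting the preceding nonvanishing.

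The main technical hurdle is the spreading step: one must produce $\rho$ so that the valuations $\nu_0$ and $\nu_{\ell+1}$ encode the twist by $-P$ and the regularity along $Q$ that $\rho_\eta$ enjoys generically. This is handled by working summand-by-summand in the definition of $T^{L_\eta(-P_\eta), M_\eta}_{P_\eta, Q_\eta}$ and clearing denominators. Once this is secured, the remainder is valuation bookkeeping pivoting on Proposition \ref{prop:nuRestriction}, the key combinatorial observation being that the left endpoint of the zero set of $(\nu_n(\rho))_{n \in [1,\ell]}$ always furnishes a component $A_{n^*}$ on which the restriction-plus-hypothesis step can be applied.
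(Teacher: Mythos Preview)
Your proof is correct, and the core mechanism---Proposition \ref{prop:nuRestriction} plus the injective-coupling hypothesis on a well-chosen component---is the same as the paper's. The organization differs, however. You normalize $\rho$ \emph{once} so that $\min_{1\le n\le\ell}\nu_n(\rho)=0$, pick the leftmost index $n^\ast$ where the minimum is attained, and derive a contradiction on $A_{n^\ast}$ in a single stroke. The paper instead renormalizes at \emph{every} component, setting $\rho_n=t^{-\nu_n(\rho)}\rho$ for each $n$, and runs an induction: injective coupling on $A_n$ forces either $\nu_{n-1}(\rho_n)<1$ or $\nu_{n+1}(\rho_n)<0$, which unwinds to $\nu_{n+1}(\rho)<\nu_n(\rho)$ for all $n$ and hence $\nu_Q(\rho)<0$. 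Your single-step extraction is marginally slicker; the paper's induction yields the slightly stronger intermediate statement that the sequence $(\nu_n(\rho))$ is strictly decreasing.

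One remark on your ``spreading'' step: it is in fact vacuous. Since $\cC^\ast$ is by definition the complement of $A_0\cup A_1\cup\cdots\cup A_{\ell+1}$, and $A_1\cup\cdots\cup A_\ell$ is the entire special fiber, $\cC^\ast$ already sits over the generic point and $H^0(\cC^\ast,\cL)\otimes_R H^0(\cC^\ast,\cM)$ is already the $K$-vector space $H^0(C_\eta^\ast,L_\eta)\otimes_K H^0(C_\eta^\ast,M_\eta)$. So $\rho=\rho_\eta$ immediately, and the conditions $\nu_0(\rho)\ge 1$, $\nu_{\ell+1}(\rho)\ge 0$ are simply the description of $T^{L_\eta(-P_\eta),M_\eta}_{P_\eta,Q_\eta}$ from Remark \ref{rem:restrictMuDomain}(2). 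Your torsion-freeness argument for $\mu(\rho)=0$ is likewise unnecessary. This does not damage the logic, but you can streamline the write-up considerably by dropping that paragraph. On the other hand, you are right to make the hypothesis $\mu(\rho)=0$ explicit before invoking injective coupling on $A_{n^\ast}$; this is what allows the contradiction.
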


\begin{proof}
Suppose that $\rho \in \ker \mu^{L_\eta,M_\eta}_{P_\eta,Q_\eta}$ is nonzero and satisfies $\nu_P(\rho) \geq 1$. We will demonstrate that $\nu_Q(\rho) < 0$.
For each $1 \leq n \leq \ell$, let $\rho_n = t^{-\nu_n(\rho)} \rho$. Proposition \ref{prop:nuRestriction} implies that $\rho_n |_{A_n^\ast}$ is nonzero and satisfies $\nu_{p_n}( \rho_n |_{A_n^\ast} ) \geq \nu_{n-1}(\rho_n)$ and $\nu_{q_n}( \rho_n |_{A_n^\ast} ) \geq \nu_{n+1}(\rho_n)$. Injective coupling on $A_n$ implies that either $\nu_{n-1}(\rho_n) < 1$ or $\nu_{n+1}(\rho_n) < 0$.
By Equation \ref{eq:nuMult}, for $1 \leq n \leq \ell-1$,

$$\nu_{n-1}(\rho_n) = \begin{cases} \nu_P(\rho) & \mbox{ if } n = 1 \\ \nu_{n-1}(\rho) - \nu_n(\rho) & \mbox{ if } n \geq 2 \end{cases},
\mbox{ and } 
\nu_{n+1}(\rho_n) = \begin{cases} \nu_Q(\rho) & \mbox{ if } n = \ell \\ \nu_{n+1}(\rho) - \nu_n(\rho) & \mbox{ if } n \leq \ell-1 \end{cases}.$$

If $\nu_P(\rho) \geq 1$, then $\nu_{n+1}(\rho) < \nu_n(\rho)$ for $1 \leq n \leq \ell-1$ by induction, and $\nu_Q(\rho) < 0$. 
\end{proof}

The following lemma is needed to apply Theorem \ref{thm:icdegen} to the coupled Petri map.

\begin{lemma}
\label{lem:extraQHarmless}
Let $(C,p,q)$ be a twice-marked smooth curve of any genus, and $L$ any line bundle on $C$. The line bundles $L$ and $\omega_C \otimes L^\vee$ have injective coupling if and only if $L(-p)$ and $\omega_C(p+q) \otimes L^\vee$ have injective coupling.
\end{lemma}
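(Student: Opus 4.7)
The plan is to prove the stronger statement that $T^{L, \omega_C \otimes L^\vee}_{p,q}$ and $T^{L(-p), \omega_C(p+q) \otimes L^\vee}_{p,q}$ are literally equal as subspaces of $H^0(C^\ast, L) \otimes H^0(C^\ast, \omega_C \otimes L^\vee)$. Once this is established, the lemma is immediate: both coupled multiplication maps are cup product on pure tensors, and since $q$ is the only possible pole of a section of $\omega_C(q)$, the residue theorem gives $H^0(\omega_C) = H^0(\omega_C(q))$, so the two maps agree as maps from this common domain to the common codomain $H^0(\omega_C)$.

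The easy inclusion comes from translating valuations. Using $\nu_q^{L(D)}(\sigma) = \nu_q^L(\sigma) + (\text{coefficient of } q \text{ in } D)$ on each factor of a pure tensor, one checks that on all tensors $\nu_p^{L(-p), \omega_C(p+q) L^\vee}(\rho) = \nu_p^{L, \omega_C L^\vee}(\rho)$ (the $\pm p$ twists cancel in the sum) and $\nu_q^{L(-p), \omega_C(p+q) L^\vee}(\rho) = \nu_q^{L, \omega_C L^\vee}(\rho) + 1$. By Remark \ref{rem:restrictMuDomain}, $T^{L(-p), \omega_C(p+q) L^\vee}_{p,q}$ is thus $\{\rho : \nu_p^{L, \omega_C L^\vee}(\rho) \geq 0,\ \nu_q^{L, \omega_C L^\vee}(\rho) \geq -1\}$, which plainly contains $T^{L, \omega_C L^\vee}_{p,q}$.

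For the reverse inclusion I would apply Lemma \ref{lem:adaptedCurve} (its single-curve variant, obtained by replacing the two valuations $\nu_0, \nu_{\ell+1}$ with $\nu_p, \nu_q$) with reference bundles $L, \omega_C \otimes L^\vee$ and parameters $A = 0, B = -1$, decomposing any $\rho \in T^{L(-p), \omega_C(p+q) L^\vee}$ as a sum of elements from the summands $H^0(L(-ap-(b-1)q)) \otimes H^0(\omega_C \otimes L^\vee(ap+bq))$ indexed by $(a,b) \in \ZZ^2$. It then suffices to show that each such summand is already contained in $T^{L, \omega_C \otimes L^\vee}_{p,q}$. The main bit of work is this summand containment, which comes out of a Riemann--Roch computation: fix $(a,b)$ and set
\begin{align*}
\alpha &= h^0(L(-ap-(b-1)q)) - h^0(L(-ap-bq)), \\
\beta &= h^0(\omega_C \otimes L^\vee(ap+bq)) - h^0(\omega_C \otimes L^\vee(ap+(b-1)q)).
\end{align*}
Each is the dimension of a quotient of $H^0$'s along a short exact sequence with skyscraper cokernel supported at $q$, hence lies in $\{0,1\}$. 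Applying Riemann--Roch to $L(-ap-(b-1)q)$ and $L(-ap-bq)$ (using Serre duality $h^1(L(D)) = h^0(\omega_C \otimes L^\vee(-D))$) and subtracting the two identities yields $\alpha + \beta = 1$, so exactly one of them equals $1$. If $\beta = 0$ the inclusion of second factors is an equality, so the summand coincides with the $(a, b-1)$-summand of $T^{L, \omega_C \otimes L^\vee}$; if $\alpha = 0$ the first factors agree, so the summand is the $(a,b)$-summand. Either way the summand lies inside $T^{L, \omega_C \otimes L^\vee}_{p,q}$, completing the reverse inclusion and the lemma.
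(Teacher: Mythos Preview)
Your proof is correct and hinges on the same Riemann--Roch dichotomy as the paper: for each $(a,b)$, exactly one of $H^0(L(-ap-(b-1)q)) = H^0(L(-ap-bq))$ or $H^0(\omega_C L^\vee(ap+bq)) = H^0(\omega_C L^\vee(ap+(b-1)q))$ holds, so every ``off-by-one'' summand already lies in $T^{L,\omega_C L^\vee}_{p,q}$. The paper organizes this slightly more economically: it works directly with the defining summands (no detour through valuations or Lemma~\ref{lem:adaptedCurve}) and factors the equality through an intermediate space $T^{L(q),\omega_C L^\vee}_{p,q}$, obtaining one inclusion from Riemann--Roch and the other from a pure index shift $(a,b)\mapsto(a+1,b+1)$. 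Your invocation of Lemma~\ref{lem:adaptedCurve} is harmless but unnecessary, since the summands you extract from it are exactly the summands defining $T^{L(-p),\omega_C(p+q)L^\vee}_{p,q}$ after that same index shift.
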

\begin{proof}
By Riemann--Roch, for all $a,b$ either $H^0(C, L(-ap-(b-1)q)) = H^0(C,L(-ap-bq))$ or $H^0(C, \omega_C \otimes L^\vee(ap + bq)) = H^0(C, \omega_C \otimes L^\vee(ap + (b-1) q))$. Therefore $T^{L, \omega_C \otimes L^\vee}_{p,q} = T^{L(q), \omega_C \otimes L^\vee}_{p,q}$. By re-indexing the sum in the definition, $T^{L, \omega_C \otimes L^\vee}_{p,q} = T^{L(-p), \omega_C(p+q) \otimes L^\vee}_{p,q}$.
\end{proof}

\begin{proof}[Proof of Theorem \ref{thm:petriSmoothing}]
In Situation \ref{sit:chain}, assume that each $(A_n, p_n, q_n)$ satisfies the coupled Petri condition. Let $L$ be any line bundle on $\cC_{\be}$; we will prove that $\mu^L_{P_\be, Q_\be}$ is injective.

First consider the case where $L$ is defined over $K$, so that $L$ may be regarded as a line bundle on $C_\eta$. Since $\cC$ is regular, this line bundle may be extended (non-uniquely) to a line bundle $\cL$ on $\cC$.
Let $\cM = \omega_{\cC/R} (P+Q) \otimes \cL^\vee$. Then $M_n = \omega_{A_n}(p_n+q_n) \otimes L_n^\vee$, by adjunction \cite[Theorem 9.1.37]{liu}. 
By assumption, $L_n$ and $\omega_{A_n} \otimes L_n^\vee$ have injective coupling; by Lemma \ref{lem:extraQHarmless} so do $L_n(-p_n)$ and $M_n$. 
Now Theorem \ref{thm:icdegen} implies that $L(-P_\eta)$ and $M = \omega_{\cC_\eta}(P_\eta+Q_\eta) \otimes L^\vee$ have injective coupling on $C_\eta$, and Lemma \ref{lem:extraQHarmless} shows that $L$ and $\omega_{\cC_\eta} \otimes L^\vee$ do too. The same is true after $L$ is pulled back from $C_\eta$ to $C_\be$, since the global section functor commutes with flat base extension.

For the general case, note that any line bundle $L$ on $C_\be$ is defined over some finite extension $K' \supseteq K$. 
We may extend $R$ to a discrete valuation ring $R'$ with fraction field $K'$. 
After base extension to $R'$ and resolving singularities, we obtain $\cC' \to \Spec R'$ with special fiber given by replacing each node $p_n$ ($2 \leq n \leq \ell$) by a chain of rational curves; see e.g. \cite[Corollary 3.25]{liu}. 
Rational curves have injective coupling (Example \ref{eg:genus01}), so we may invoke the first case to conclude that $L$ has injective coupling on $C_\be$.
\end{proof}

\section{Flags and permutations}
\label{sec:flags}

This section collects, without proofs, some basic facts about how permutations are used to record the relative position of a pair of flags in a fixed vector space and to define degeneracy loci of flags within vector bundles. There are a variety of conventions in the literature, depending on whether flags are indexed by dimension or codimension, whether indexing begins at $0$ or $1$, and other choices, so it is useful to consolidate the facts that we need in our notation.

\begin{fact}
\label{fact:permBasis}
Let $P^\sbu, Q^\sbu$ be two flags in a vector space $H$, with sets of coranks $A$ and $B$ respectively, and let $n = \dim H$. There exists a unique permutation $\sigma$ of $[0,n-1]$ such that
\begin{enumerate}
\item For all $a \in A, b \in B$, $\dim P^a \cap Q^b = \# \{ i \in [0,n-1]: i \geq a \mbox{ and } \sigma(i) \geq b \}$,
\item for all $a \in [0,n-1]$ with $a > 0$ and $a \not\in A$, $\sigma(a) < \sigma(a-1)$, and 
\item for all $b \in [0,n-1]$ with $b >0$ and $b \not\in B$, $\sigma^{-1}(b) < \sigma^{-1}(b-1)$.
\end{enumerate} 
There exists a basis $\{v_0, \cdots, v_{n-1}\}$ of $H$ such that for all $a \in A, b \in B$, $\{v_i: i \geq a \}$ is a basis for $P^a$ and $\{v_i:\ \sigma(i) \geq b\}$ is a basis for $Q^b$. Therefore $\{ v_i:\ i \geq a, \sigma(i) \geq b \}$ is a basis for $P^a \cap Q^b$.
\end{fact}

The permutation $\sigma$ is called the \emph{permutation associated to $P^\sbu, Q^\sbu$}. We call any permutation of $[0,n-1]$ satisfying conditions (2) and (3) above \emph{compatible with coranks $A, B$}. We will call a basis as in Fact \ref{fact:permBasis} \emph{adapted to $P^\sbu, Q^\sbu$}. 

The association of a permutation to a pair of flags can also be reversed, to define \emph{degeneracy loci} for a pair of flags in a vector bundle. We follow the conventions of \cite[$\S 2$]{cpRR} here. Given a permutation $\sigma$ of $[0,n-1]$, define the \emph{rank function} $r^\sigma: \NN^2 \to \NN$ of $\sigma$ by
\begin{equation}
\label{eq:rsigma}
r^\sigma(a,b) = \# \{a' \in [0,n-1]: a' \geq a \mbox{ and } \sigma(a') \geq b \}.
\end{equation}
If $S$ is a scheme with a rank $n$ vector bundle $\cH$ and two complete flags $\cpb, \cqb$ in $\cH$, define
\begin{equation}
\label{eq:dsigmaComplete}
D_\sigma(\cpb; \cqb) = \big\{ x \in S:\ \dim (\cP^a)_x \cap (\cQ^b)_x \geq r^\sigma(a,b) \mbox{ for all } a,b \in [0,n-1] \big\}.
\end{equation}
The equation above is only a set-theoretic definition. To give a scheme-theoretic definition, translate each inequality into a bound on the rank of the bundle map $\cQ^b \to \cH / \cP^a$, trivialize this bundle map locally, and consider the scheme defined by all the minors of appropriate size in the matrix representation. Equivalently, one can bound the rank of the difference map $\cP^a \oplus \cQ^b \to \cH$.

Although there are a priori $d^2$ inequalities imposed in Equation \ref{eq:dsigmaComplete}, many of them are redundant. 
A minimal set of rank conditions is provided by the \emph{essential set}, defined in \cite{fultonSchubert}.
\begin{equation}
\label{eq:essSet}
\Ess(\sigma) = \{(a,b): 1 \leq a,b < n,\ \sigma(a-1) < b \leq \sigma(a) \mbox{ and } \sigma^{-1}(b-1) < a \leq \sigma^{-1}(b) \}.
\end{equation}
The inequalities in Equation \ref{eq:essSet} are equivalent to the equations and inequalities
\begin{equation}
\label{eq:essSetV2}
r^\sigma(a-1,b) = r^\sigma(a,b-1) = r^\sigma(a,b) > r^\sigma(a+1,b)= r^\sigma(a,b+1),
\end{equation}
which make the redundancy more clear (cf. Equation \ref{eq:essPi}).
By \cite[Lemma 3.10]{fultonSchubert}, the scheme defined in Equation \eqref{eq:dsigmaComplete} is the same if we consider only $(a,b) \in \Ess(\sigma)$. A pleasant consequence of this is that it is also possible to define $D_\sigma$ for \emph{partial} flags, provided that the flags have strata of all coranks mentioned in $\Ess(\sigma)$.

\begin{defn}
\label{def:dsigma}
Let $\cpb, \cqb$ be flags with coranks $A,B$ respectively, and let $\sigma$ be a permutation of $[0,n-1]$ such that $\Ess(\sigma) \subseteq A \times B$. Define
$$D_\sigma(\cpb; \cqb) = \{ x \in S: \dim (\cP^a)_x \cap (\cQ^b)_x \geq r^\sigma(a,b) \mbox{ for all } (a,b) \in \Ess(\sigma) \},$$
where the scheme structure is defined in the manner described above.
\end{defn}

Permutations of $[0,n-1]$ have a partial order, the \emph{Bruhat order}, in which $\sigma \leq \tau$ if and only if $r^\sigma \geq r^\tau$. Therefore $\tau \leq \sigma$ implies $D_{\tau}(\cpb; \cqb) \subseteq D_{\sigma}(\cpb; \cqb)$.

Denote by $\Fl(A; n)$ the flag variety parameterizing flags $V^\sbu$ of coranks $A$ in $k^n$. Taking $\cV^\sbu$ to be the tautological flag of coranks $A$ on $\Fl(A;n)$, and $F^\sbu$ to be a fixed flag in $k^n$ of coranks $B$, we denote
$$X_\sigma(F^\sbu) = D_\sigma(\cV^\sbu; F^\sbu) \subseteq \Fl(A; n),$$
where $\sigma$ is any permutation compatible with coranks $A, B$. These are \emph{Schubert varieties}, and they are prototypical of all degeneracy loci of pairs of flags. 

\begin{fact}
\label{fact:Schubert}
Fix a permutation $\sigma$ compatible with coranks $A,B$, and a fixed flag $F^\sbu$ of coranks $B$ in $k^n$. The Schubert variety $X_\sigma(F^\sbu)$ has codimension $\ell( \omega_{n-1} \sigma)$ in $\Fl(A; n)$. Its singular locus is a union of Schubert varieties $X_\tau(F^\sbu)$ for certain permutations $\tau \leq \sigma$. All such $\tau$ have $\ell(\omega_{n-1} \tau) \geq \ell(\omega_{n-1} \sigma) + 2$, so Schubert varieties are smooth in codimension $1$.
\end{fact}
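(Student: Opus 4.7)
The plan is to reduce to standard facts about Schubert varieties in the complete flag variety $\Fl([0,n-1];n)$ and then descend to $\Fl(A;n)$ along the natural projection $\pi$ that forgets the strata whose coranks lie outside $A$. For the codimension, I would work first in $\Fl([0,n-1];n)$: the Bruhat decomposition stratifies this flag variety into open Schubert cells $X_\tau^\circ$ (the loci where the permutation associated to the tautological flag and $F^\sbu$ is exactly $\tau$), each of which, via Fact \ref{fact:permBasis}, is isomorphic to an affine space of dimension $\ell(\tau)$ obtained by parameterizing modifications of an adapted basis. Since $\dim \Fl([0,n-1];n) = \binom{n}{2} = \ell(\omega_{n-1})$, the cell $X_\sigma^\circ$ has codimension $\binom{n}{2}-\ell(\sigma) = \ell(\omega_{n-1}\sigma)$, and this equals the codimension of its closure $X_\sigma$. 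Since $\sigma$ is compatible with the coranks $A, B$, it is the minimal-length representative of its coset modulo the parabolic $W_A$; the restriction of $\pi$ to $X_\sigma^{\mathrm{complete}}$ is then a birational morphism onto $X_\sigma(F^\sbu)$ whose generic fibers have the same dimension as those of $\pi$ itself, so codimension is preserved.

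For the singular locus, the key observation is that $X_\sigma$ is invariant under the Borel subgroup $B \subset \GL_n$ stabilizing $F^\sbu$, since the defining rank conditions $\dim(\cP^a \cap F^b) \geq r^\sigma(a,b)$ are manifestly $B$-invariant. Hence $\on{Sing}(X_\sigma)$ is itself $B$-invariant, and the $B$-orbits in $\Fl(A;n)$ are precisely the open Schubert cells $X_\tau^\circ$ (for $\tau$ compatible with coranks $A,B$). Taking closures gives $\on{Sing}(X_\sigma) = \bigcup_\tau X_\tau$ for some collection of $\tau < \sigma$ in Bruhat order.

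The main obstacle is the codimension-one smoothness: one must rule out any $\tau$ with $\ell(\omega_{n-1}\tau) = \ell(\omega_{n-1}\sigma)+1$ appearing in the decomposition. The cleanest route is to invoke the normality of Schubert varieties (Ramanathan, Seshadri, Mehta--Ramanathan): Schubert varieties in type $A$ are normal and Cohen--Macaulay, indeed have rational singularities, so Serre's criterion forces them to be regular in codimension 1. A more self-contained alternative constructs a Bott--Samelson resolution $\widetilde X \to X_\sigma$ associated to a reduced word for $\omega_{n-1}\sigma$, and verifies by direct inspection of the subword structure that this resolution is an isomorphism over a neighborhood of the generic point of every codimension-one Schubert subvariety $X_\tau \subset X_\sigma$, forcing $X_\tau$ to lie in the smooth locus. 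Either approach yields the bound $\ell(\omega_{n-1}\tau) \geq \ell(\omega_{n-1}\sigma)+2$, and the statement then transfers to the partial flag variety via $\pi$, using that smoothness can be checked after smooth base change along $\pi$.
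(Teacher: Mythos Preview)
The paper does not prove this statement at all: it is labeled a ``Fact'' in a section whose stated purpose is to collect standard results ``without proofs,'' and the paper merely cites the literature (Lakshmibai--Sandhya for the conjecture, and Billey--Warrington, Cortez, Kassel--Lascoux--Reutenauer, and Manivel for the proof) for the precise description of the singular locus. So there is no paper proof to compare against; your outline supplies what the paper deliberately omits.

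Your outline is sound. The codimension computation via the Bruhat cell decomposition and the $B$-invariance argument for the structure of the singular locus are the standard ones. For the codimension-$1$ smoothness you correctly identify normality of Schubert varieties (hence regularity in codimension $1$ by Serre's criterion) as the cleanest route, with Bott--Samelson resolutions as a backup. One small point to watch: your claim that compatibility with coranks $A,B$ makes $\sigma$ the \emph{minimal}-length coset representative deserves a second look against the paper's conventions. The conditions in Fact~\ref{fact:flagPerm} impose \emph{descents} at positions outside $A$ (and likewise for $\sigma^{-1}$ outside $B$), which in many conventions pins down the \emph{maximal}-length representative rather than the minimal one; in either case the projection $\pi$ restricted to the corresponding complete-flag Schubert variety has the right fiber dimension to preserve codimension, but you should state carefully which representative you are using and why the fiber over the generic point of $X_\sigma(F^\sbu)$ has dimension $\dim\pi^{-1}(\mathrm{pt})$. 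The final transfer of smoothness along the smooth morphism $\pi$ is fine.
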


If $X_{\tau}(F^\sbu) \subseteq X_{\sigma}(F^\sbu)_{\operatorname{sing}}$, we will say that \emph{$\tau$ is singular in $X_\sigma$}. A combinatorial description of those $\sigma, \tau$ for which $\tau$ is singular in $X_\sigma$ was conjectured in \cite{lakshmibai-sandhya-criterion} and proved independently by several groups \cite{billey-warrington-maximal, cortez-singularities, kassel-lascoux-reutenauer-singular, manivel-lieu}.

\section{Versality of a pair of flags}
\label{sec:versality}

This section gives a definition and some preliminary results about versality of flags, and applies these results to Brill-Noether flags. We follow the point of view of \cite{cpRR}, but do not assume the flags are complete, and restrict attention to pairs of \emph{two} flags. 
Throughout this section, let $S$ be a finite-type scheme over an algebraically closed field $k$ of any characteristic, with a rank $n$ vector bundle $\cH$. Suppose further that two (partial or complete) flags $\cpb, \cqb$ are chosen, with sets of coranks $A,B$ respectively. We will reserve the letters $a,b$ for elements of $A$ or $B$ respectively.

\begin{defn}
\label{defn:versal}
Denote by $f: \Fr(\cH) \to S$ the frame bundle of $\cH$ and by $\Fl(A; n)$ the flag variety parameterizing flags $F^\sbu$ of coranks $A$. 
The flags $\cpb, \cqb$ induce $p: \Fr(\cH) \to \Fl(A;n) \times \Fl(B;n)$.
The pair $\cpb, \cqb$ is \emph{versal} if $p$ is a smooth morphism.
\end{defn}

\subsection{First-order deformations of a pair of flags}
Fix a $k$-point $x \in S$. For simplicity, denote by $H, P^\sbu, Q^\sbu$ the fibers of $\cH, \cpb, \cqb$ over $x$. Define
\begin{equation}
\label{eq:Mspace}
M_x = \operatorname{coker} \left( \End H \xrightarrow{\Delta} \End H / \Fix P^\sbu \oplus \End H / \Fix Q^\sbu \right).
\end{equation}
where $\Fix P^\sbu$ denotes $\{ \phi \in \End H:\ \phi(P^a) \subseteq P^a \mbox{ for all $a \in A$}\}$, 
and $\Delta$ is the diagonal map. 

Fix a point $y \in f^{-1}(x)$. This amounts to an isomorphism $k^n \xrightarrow{\sim} H$, which induces
$$T_{p(y)} \Fl(A; n) \times \Fl(B; n) \xrightarrow{\sim} \End H / \Fix P^\sbu \oplus \End H / \Fix Q^\sbu.$$
Using this identification we define a quotient map $q_y: T_{p(y)} \Fl(A;n) \times \Fl(B;n) \to M_x$. The kernel of $q_y$ is isomorphic to the image of the diagonal map from $\End H$, and may be identified with the tangent space to the $\GL_n(k)$-orbit of $p(y)$. The following lemma follows from the discussion in \cite[$\S$3]{cpRR}, which generalizes with minor notation changes to the case of incomplete flags. 

\begin{lemma}
\label{lem:deltax}
There is a a linear map $\delta_x: T_x S \to M_x$, independent of the choice of $y$, such that the following diagram commutes. In this diagram, the rows are exact, $\GL_n(k)$ is identified with the fiber $f^{-1} (x)$ and the map to $T_y \Fr(\cH)$ is the differential of the inclusion.
\begin{center}
\begin{tikzcd}
0 \ar[r] & T_{\operatorname{id}} \GL_n(k) \ar[r] \ar[d, equals] & T_y \Fr(\cH) \ar[d, "dp_y"] \ar[r,"df_y"]& T_x S \ar[r] \ar[d,"\delta_x"] & 0 \\
0 \ar[r] & \End H \ar[r, "\Delta"] & T_{p(y)} \Fl(A; d) \times \Fl(B; d) \ar[r,"q_y"] & M_x \ar[r] & 0\\
\end{tikzcd}
\end{center}
The flag pair $\cP^\sbu, \cQ^\sbu$ is versal if and only if $S$ is smooth over $k$ and $\delta_x$ is surjective for all $x \in S$.
\end{lemma}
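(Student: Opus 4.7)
The plan is to produce $\delta_x$ by a diagram chase, treating the right-hand square as determined by the left two columns. The top row is the canonical short exact sequence for the principal $\GL_n(k)$-bundle $f\colon \Fr(\cH)\to S$: since $f$ is smooth and surjective, $df_y$ is surjective with kernel canonically $T_{\id}\GL_n(k)$. Since the bottom row presents $M_x$ as the cokernel of $\Delta$, the existence of $\delta_x$ with $\delta_x\circ df_y = q_y\circ dp_y$ is formal once one verifies that $q_y\circ dp_y$ vanishes on $T_{\id}\GL_n(k)\hookrightarrow T_y\Fr(\cH)$; the commutativity of the right-hand square follows automatically from the universal property of the cokernel.

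The crux is therefore to identify the composite $T_{\id}\GL_n(k)\to T_y\Fr(\cH)\xrightarrow{dp_y} T_{p(y)}\bigl(\Fl(A;n)\times\Fl(B;n)\bigr)$. Writing $\phi_y\colon k^n\xrightarrow{\sim} H$ for the isomorphism recorded by $y$, the map $g\mapsto y\cdot g$ followed by $p$ is the $\GL_n(k)$-orbit map through $p(y)=(\phi_y^{-1}P^\sbu,\phi_y^{-1}Q^\sbu)$. Differentiating at the identity and using the standard fact that the tangent to a $G$-orbit at a point with stabilizer of Lie algebra $\mathfrak p$ is $\mathfrak g/\mathfrak p$ — here each stabilizer is $\Fix$ of the corresponding flag — this composite is (up to sign, from the right-action convention) the diagonal map $\Delta$, once $T_{p(y)}\bigl(\Fl(A;n)\times\Fl(B;n)\bigr)$ is identified with $\End H/\Fix P^\sbu\oplus\End H/\Fix Q^\sbu$ via $\phi_y$. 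Thus $q_y\circ dp_y|_{T_{\id}\GL_n(k)} = q_y\circ\Delta = 0$ by the defining cokernel sequence of $M_x$, which yields $\delta_x$.

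For independence of $y$, I would exploit the $\GL_n(k)$-equivariance $p\circ R_g = \rho_{g^{-1}}\circ p$, where $R_g$ is right multiplication on $\Fr(\cH)$ and $\rho_{g^{-1}}$ is the corresponding action on $\Fl(A;n)\times\Fl(B;n)$. Given $y' = y\cdot g$ and any lift $\tilde v\in T_y\Fr(\cH)$ of $v\in T_xS$, the translate $R_g(\tilde v)$ is a lift of $v$ in $T_{y'}\Fr(\cH)$, and the identifications of $T_{p(y)}$ and $T_{p(y')}$ with $\End H/\Fix P^\sbu\oplus\End H/\Fix Q^\sbu$ built respectively from $\phi_y$ and $\phi_{y'} = \phi_y\circ g$ intertwine $d\rho_{g^{-1}}$ with the identity of that common target — a brief direct computation, since conjugation by $g$ on $\End(k^n)$ is absorbed into the change of identifications. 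Hence $q_{y'}(dp_{y'}(R_g\tilde v)) = q_y(dp_y\tilde v)$ in $M_x$, so $\delta_x$ does not depend on $y$. Conceptually, this just records that the equivariant morphism $p$ descends to a map $\bar p\colon S\to [\Fl(A;n)\times\Fl(B;n)/\GL_n(k)]$ whose derivative at $x$ is intrinsically $\delta_x$.

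The main nuisance I anticipate is bookkeeping rather than substance: pinning down precisely how $\phi_y$ enters the identification of $T_{p(y)}$ with $\End H/\Fix P^\sbu\oplus\End H/\Fix Q^\sbu$, checking that the sign in the right-action differential matches the sign in $\Delta$, and verifying directly that the two identifications of $T_{p(y')}$ with the same quotient space really do coincide after passage through $d\rho_{g^{-1}}$. Once these conventions are pinned down, both the factorization and the independence statement are formal consequences.
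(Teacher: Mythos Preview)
Your argument is correct and is the standard one: use the cokernel universal property after verifying that the left square commutes (i.e., that $dp_y$ restricted to the fiber is the orbit-map differential $\Delta$), then check independence of $y$ via $\GL_n(k)$-equivariance. The paper in fact omits the proof entirely, referring to \cite[\S 3]{cpRR} and noting that only notational changes are needed for partial flags; your outline is precisely the argument one finds there, so there is nothing to compare.

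One small point worth tightening when you write it up: you should also observe that the quotient map $q_y$ itself depends on $y$ only through the identification $\phi_y$, so that the target $M_x$ is genuinely independent of $y$ (not just canonically isomorphic for different $y$). You gesture at this in saying ``conjugation by $g$ is absorbed into the change of identifications,'' and your computation is right, but making explicit that both $q_y$ and $q_{y'}$ land in the \emph{same} space $M_x$ (defined intrinsically from $H$, $P^\sbu$, $Q^\sbu$) is what makes ``$\delta_x$ does not depend on $y$'' a literal equality rather than an isomorphism statement.
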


We require a more concrete description of the map $\delta_x$ suitable to study Brill-Noether flags. Let $v \in T_x S$, and regard $v$ as a morphism $v: \Spec k[\eps] \to S$, where $k[\eps]$ is the ring of dual numbers. This gives an extension $0 \to H \to v^\ast \cH \to H \to 0$, where the first map is multiplication by $\eps$ and the second is quotient modulo $\eps$. A choice of $y \in f^{-1}(x)$ and $w \in df_y^{-1}(v)$ determines a trivialization $v^\ast \cH \xrightarrow{\sim} k[\eps]^n$, which determines a splitting $s: v^\ast \cH \to H$. Inductively construct two (non-unique) splittings $\phi_P, \phi_Q: H \to v^\ast \cH$ of the quotient map, such that $\phi_P(P^a) \subseteq v^\ast \cP^a$ and $\phi_Q(Q^b) \subseteq v^\ast \cQ^b$ for all $a\in A, b \in B$. These two maps precisely give the differential of $p$:

\begin{equation}
\label{eq:dpy}
dp_y(w) = ( s \circ \phi_P + \Fix P^\sbu,\ s \circ \phi_Q + \Fix Q^\sbu) \in \End H / \Fix P^\sbu \oplus \End H / \Fix Q^\sbu.
\end{equation} 
Now, applying the quotient $q_y$ to this formula, we obtain
\begin{lemma}
\label{lem:deltaxDesc}
For any $x \in S$ and $v \in T_x S$, let $s: v^\ast \cH \to H$ be any $k$-linear splitting, and $\phi_P, \phi_Q: H \to v^\ast \cH$ be any $k$-linear splittings such that $\phi_P(P^a) \subseteq v^\ast \cP^a$ and $\phi_Q(Q^b) \subseteq v^\ast \cQ^b$ for all $a \in A, b \in B$. Then
$\delta_x(v) = [ s \circ \phi_P, s \circ \phi_Q ],$
where the brackets indicate the coset in $M_x$.
\end{lemma}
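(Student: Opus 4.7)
The plan is to deduce Lemma \ref{lem:deltaxDesc} from equation (\ref{eq:dpy}), which already gives the desired formula for the specific splittings produced from a choice of $y \in f^{-1}(x)$ and $w \in df_y^{-1}(v)$. The key reduction is to show that the class $[s\circ \phi_P, s\circ \phi_Q] \in M_x$ does not depend on the triple $(s, \phi_P, \phi_Q)$, provided the stated compatibility conditions hold. Given such independence, applying the quotient $q_y$ to both sides of (\ref{eq:dpy}) yields $\delta_x(v) = [s \circ \phi_P, s \circ \phi_Q]$ for the specific choice, and hence for any allowed choice. The left-hand side really does compute $\delta_x(v)$ because $\ker df_y$ maps into $\on{im} \Delta = \ker q_y$, which is precisely why $\delta_x$ is well-defined in Lemma \ref{lem:deltax}.

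First I would verify that changing $s$ does not affect the class. Any two retractions $s, s': v^*\cH \to H$ of the inclusion $\eps : H \hookrightarrow v^*\cH$ satisfy $(s - s')\circ \eps = 0$, so $s - s'$ factors through the quotient $\pi: v^*\cH \to H$ as $\psi\circ \pi$ for a unique $\psi \in \End H$. Since both $\phi_P$ and $\phi_Q$ are sections of $\pi$, the difference $(s\circ \phi_P - s'\circ \phi_P,\ s\circ \phi_Q - s'\circ \phi_Q) = (\psi,\psi) = \Delta(\psi)$ lies in the image of the diagonal, hence vanishes in $M_x$.

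Next I would show that changing $\phi_P$ only affects the first coordinate modulo $\Fix P^\sbu$. Two $P$-compatible sections $\phi_P, \phi_P'$ of $\pi$ differ by a map landing in $\ker \pi = \eps H$, which we may write as $\eps \circ \alpha$ for a unique $\alpha \in \End H$. The compatibility condition $\phi_P(P^a), \phi_P'(P^a) \subseteq v^*\cP^a$ will force $\alpha(P^a) \subseteq P^a$ for every $a \in A$, i.e.\ $\alpha \in \Fix P^\sbu$. Applying $s$ and using $s\circ \eps = \id_H$ then gives $s\circ \phi_P - s\circ \phi_P' = \alpha \in \Fix P^\sbu$, so the first component of $(s\circ \phi_P, s\circ \phi_Q)$ is unchanged modulo $\Fix P^\sbu$. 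The analogous argument handles $\phi_Q$.

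The main obstacle in executing the previous step is verifying that $v^*\cP^a \cap \eps H = \eps P^a$, which is what forces $\alpha$ into $\Fix P^\sbu$ rather than merely $\End H$. This is a straightforward dimension count: $v^*\cP^a$ is a first-order deformation of $P^a$, free of rank $n-a$ as a $k[\eps]$-module, and surjects onto $P^a$ via $\pi$ with $k$-kernel of dimension $n-a$, which must coincide with the evident inclusion $\eps P^a \subseteq v^*\cP^a \cap \eps H$ by dimension. Once this point is clear, combining the two independence steps with (\ref{eq:dpy}) completes the proof.
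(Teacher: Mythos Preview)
Your proposal is correct and follows the same approach as the paper: deduce the formula by applying $q_y$ to equation~(\ref{eq:dpy}) and then verify that the resulting class in $M_x$ is independent of the choices of $s$, $\phi_P$, and $\phi_Q$. The paper simply asserts that $s\circ\phi_P$ is unique modulo $\Fix P^\sbu$ and leaves the independence from $s$ implicit (as a consequence of the well-definedness of $\delta_x$ in Lemma~\ref{lem:deltax}); you carry out these verifications explicitly, including the key identification $v^\ast\cP^a \cap \eps H = \eps P^a$, which is exactly the right ingredient.
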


\subsection{A description of \texorpdfstring{$M_x^\vee$}{the dual space}}
The dual vector space $M^\vee_x$ has a convenient description that will be well-suited to relating it to the coupled Petri map. 

\begin{defn}
\label{defn:rab}
For $a \in A, b \in B$, define $r_{a,b}: M_x \to \Hom(P^a \cap Q^b, H / (P^a + Q^b))$ by 
$$r_{a,b}([\psi_P, \psi_Q]) = q \circ (\psi_P - \psi_Q) \mid_{P^a \cap Q^b},$$
where $q$ denotes the quotient map $H \to H / (P^a + Q^b)$. 
\end{defn} 

\begin{prop}
\label{prop:mdual}
There is an isomorphism
$$\zeta: \sum_{a \in A, b \in B} (P^a + Q^b)^\perp \otimes (P^a \cap Q^b) \to M_x^\vee$$
such that for all $a,b$ the induced map $(P^a + Q^b)^\perp \otimes (P^a \cap Q^b) \to M_x^\vee$ is equal to ${}^t r_{a,b}$. Here, $\perp$ is the annihilator subspace, and $(P^a + Q^b)^\perp \otimes (P^a \cap Q^b)$ is identified with $\Hom(P^a \cap Q^b, H / (P^a + Q^b))^\vee$. 
\end{prop}

\begin{proof}
First, we claim that the product map 
$$r = \prod_{a,b} r_{a,b}: M_x \to \prod_{a \in A, b \in B} \Hom(P^a \cap Q^b, H / (P^a + Q^b) )$$
 is injective. Choose a basis $\mathcal{B}$ adapted to $P^\sbu, Q^\sbu$ (see Fact \ref{fact:permBasis}). Let $[\psi, 0] \in M_x$ be any element of $\ker r$. We will show that $[\psi, 0] = 0$, by constructing $\psi_1 \in \Fix P^\sbu, \psi_2 \in \Fix Q^\sbu$ such that $\psi = \psi_1 - \psi_2$. For each $v \in \mathcal{B}$, choose $a \in A, b \in B$ maximal such that $v \in P^a \cap Q^b$. Then $v = v_1 - v_2$ for some $v_1 \in P^a, v_2 \in Q^b$, since $[\psi, 0] \in \ker r_{a,b}$. Define $\psi_1(v) = v_1,\ \psi_2(v) = v_2$. Then $[\psi,0] = [\psi_1, \psi_2] = 0 \in M_x$. This establishes that $\prod_{a,b} r_{a,b}$ is injective.
 
Now, define $\Delta: \End H \to \prod_{a,b} \Hom(P^a \cap Q^b, H / (P^a + Q^b))$ to be the diagonal map, and $\pi: \End H \twoheadrightarrow M_x$ to be the map $\psi \mapsto [\psi, 0]$. Then $r \circ \pi = \Delta$. Since $r$ is injective and $\pi$ is surjective, we obtain an isomorphism $\zeta: \operatorname{im} {}^t \Delta \xrightarrow{\sim} M^\vee$. Identifying $\operatorname{im} {}^t \Delta$ with $\sum_{a,b} (P^a + Q^b)^\perp \otimes (P^a \cap Q^b)$ gives the desired isomorphism. It follows from the construction that $\zeta_{a,b} = {}^t r_{a,b}$.
\end{proof}

The isomorphism $\zeta$ demonstrates that the map $\delta_x$ may be conveniently studied via the compositions $r_{a,b} \circ \delta_x$, which record first-order deformations of the pair $P^a, Q^b$ of subspaces. These maps have a convenient cohomological description, which is shown in Figure \ref{fig:snake}. Consider the difference morphism $\cP^a \oplus \cQ^b \to \cH$ given by $(\alpha, \beta) \mapsto \alpha - \beta$. Given a tangent vector $v \in T_x S$, we obtain a map of short exact sequences shown in the Figure.
The snake map $P^a \cap Q^b \to H / (P^a + Q^b)$ can be described explicitly as $q \circ s \circ (\phi_P - \phi_Q) \circ \iota$, which is also equal to $r_{a,b} \circ \delta_x$. This proves

\begin{lemma}
\label{lem:rabsnake}
Let $v \in T_x S$ be a tangent vector. For all $a \in A, b \in B$, the map $r_{a,b} \circ \delta_x(v)$ is equal to the snake map described above and shown in Figure \ref{fig:snake}.
\end{lemma}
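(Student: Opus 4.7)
The plan is to unwind both sides of the claimed equality using the concrete descriptions already assembled and then match them by a direct diagram chase; no new input beyond what has been set up is required. The statement essentially says ``the explicit formula for $\delta_x$ from Lemma \ref{lem:deltaxDesc} computes the snake map of the displayed diagram,'' so the work is to check this side-by-side.

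First, I would expand the left-hand side. By Lemma \ref{lem:deltaxDesc}, for any choice of splittings $s: v^\ast\cH \to H$ and $\phi_P, \phi_Q: H \to v^\ast\cH$ compatible with the flags, $\delta_x(v) = [s\circ\phi_P,\ s\circ\phi_Q] \in M_x$. Plugging this into Definition \ref{defn:rab} gives
$$r_{a,b}(\delta_x(v)) \;=\; q\circ(s\circ\phi_P - s\circ\phi_Q)\big|_{P^a\cap Q^b} \;=\; q\circ s\circ(\phi_P-\phi_Q)\circ\iota,$$
where $\iota: P^a\cap Q^b \hookrightarrow H$ is the inclusion and $q: H \twoheadrightarrow H/(P^a+Q^b)$ is the quotient.

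Second, I would compute the snake map of the map of short exact sequences $0\to P^a\oplus Q^b \to v^\ast\cP^a\oplus v^\ast\cQ^b \to P^a\oplus Q^b\to 0$ to $0\to H\to v^\ast\cH\to H\to 0$, where each vertical arrow is the difference $(\alpha,\beta)\mapsto\alpha-\beta$. The kernel of the outer vertical difference is $\{(u,u): u\in P^a\cap Q^b\}$, naturally identified with $P^a\cap Q^b$. Tracing $u$ through the snake: lift $(u,u)$ via $(\phi_P,\phi_Q)$ to $(\phi_P(u),\phi_Q(u))\in v^\ast\cP^a\oplus v^\ast\cQ^b$ (the $\phi$'s land in the correct strata by construction), apply the middle vertical to obtain $\phi_P(u)-\phi_Q(u)\in v^\ast\cH$, note that this element reduces to $0$ modulo $\eps$ so it lies in the kernel of $v^\ast\cH\twoheadrightarrow H$, realize that kernel as $H$ via the splitting $s$ to get $s(\phi_P(u)-\phi_Q(u))\in H$, and finally project to $H/(P^a+Q^b)$, yielding $q\circ s\circ(\phi_P-\phi_Q)(u)$.

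The two expressions coincide literally, proving the lemma. There is no substantive obstacle; the entire content is bookkeeping, and the only place where one might stumble is in checking that the splitting $s$ of Lemma \ref{lem:deltaxDesc} really does identify the kernel of $v^\ast\cH\twoheadrightarrow H$ with $H$ in the way demanded by the snake lemma recipe. This is immediate from the construction of $s$: it is induced by a frame $y$ via the trivialization $v^\ast\cH\cong k[\eps]^n$, and $\eps$-multiplication is precisely the inclusion of this copy of $H$ as the kernel of reduction modulo $\eps$. So the proof is a one-paragraph diagram chase once the pieces are lined up.
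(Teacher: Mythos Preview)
Your proposal is correct and matches the paper's argument essentially verbatim: the paper also writes the snake map explicitly as $q \circ s \circ (\phi_P - \phi_Q) \circ \iota$ and observes that this coincides with $r_{a,b} \circ \delta_x(v)$ via Lemma \ref{lem:deltaxDesc} and Definition \ref{defn:rab}. Your version is slightly more explicit in spelling out the diagram chase and the role of $s$ as a retraction onto the $\eps$-component, but the content is identical.
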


\begin{figure}
\centering
\begin{tikzcd}
&&&0 \ar[d] & \\
&& \ar[ddd, phantom, ""{coordinate,name=middle}] &P^a \cap Q^b \ar[d, "\iota"] 
\ar[dddll, rounded corners, "r_{a,b}\circ \delta_x(v)",
to path = { -- ([xshift=12ex]\tikztostart.east) \tikztonodes
|- (middle) 
-| ([xshift=-12ex]\tikztotarget.west) 
-- (\tikztotarget)  }
] 
\\
0 \ar[r] & P^a \oplus Q^b \ar[r] \ar[d,crossing over] & v^\ast \cP^a \oplus v^\ast \cQ^b  \ar[r] \ar[d,crossing over] & P^a \oplus Q^b \ar[r] \ar[d, crossing over] \ar[l, bend right, "\phi_P \oplus \phi_Q"'] & 0 \\
0 \ar[r] & H \ar[r] \ar[d, "q"] & v^\ast \cH \ar[r] \ar[l, bend right=15, "s"'] & H \ar[r] & 0\\
& H / (P^a + Q^b) \ar[d] & ~\\
& 0
\end{tikzcd}
\caption{The composition $r_{a,b} \circ \delta_x(v)$ as a snake map.}
\label{fig:snake}
\end{figure}

\subsection{First-order deformation of Brill-Noether flags}

We now specialize to our intended application: let $(C,p,q)$ be a twice-marked smooth curve over an algebraically closed field $k$, $S = \Pic^d(C)$ for some integer $d \geq 2g-1$, and consider the degree-$d$ Brill-Noether flags $\cpb_d, \cqb_d$ in the vector bundle $\cH_d$. For convenience, let $n = d+1-g = \operatorname{rank}(\cH_d$).

Fix a point $[L] \in \Pic^d(C)$. In the notation this section, $H = H^0(C,L)$, $P^a = H^0(C,L(-ap))$, $Q^b = H^0(C,L(-bq))$, $P^a \cap Q^b = H^0(C,L(-ap-bq))$, and the sets of coranks are $A = B = [0, d-2g+1]$. For all $a \in A, b \in B$, there is an exact sequence of $\cO_C$-modules
\begin{equation}
\label{eq:labSequence}
0 \to L(-ap-bq) \to L(-ap) \oplus L(-bq) \to L \to 0,
\end{equation}
where the second map is given by $(s,t) \mapsto s-t$ on sections. Taking cohomology, and using the fact that $L, L(-ap)$, and $L(-bq)$ are nonspecial line bundles, gives an isomorphism
$$ h_{a,b}: H / (P^a + Q^b) \xrightarrow{\sim} H^1(C, L(-ap-bq)).$$

Dualizing these maps and using functoriality of the long exact sequence gives an isomorphism
\begin{equation}
\label{eq:quoth1}
\theta:
\sum_{0 \leq a,b \leq n} H^1(C, L(-ap-bq))^\vee \otimes H^0(L(-ap-bq))
\xrightarrow{\sim}
\sum_{0 \leq a,b \leq n} (P^a + Q^b)^\perp \otimes (P^a \cap Q^b).
\end{equation}
Using Serre duality and identifying the domain of $\theta$ with $T^L_{p,q} |_{[0,n]^2}$ gives an isomorphism
$$\zeta \circ \theta:\ T^L_{p,q} |_{[0,n]^2} \xrightarrow{\sim} M_{[L]}^\vee.$$
Identifying the contangent space $T^\vee_x S \cong H^1(C,\cO_C)^\vee$ with $H^0(C, \omega_C)$, regard ${}^t \delta_{[L]}$ as a map
$${}^t \delta_{[L]}: M^\vee_{[L]} \to H^0(C, \omega_C).$$

We may finally link versality of Brill-Noether flags to the coupled Petri map.

\begin{prop}
\label{prop:etaIsMult}
The map $\eta = {}^t \delta_{[L]} \circ \zeta \circ \theta$ is equal to the $[0,n]^2$-coupled Petri map $\mu^L_{p,q} |_{[0,n]^2}$. 
\end{prop}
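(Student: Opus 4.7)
Since $T^L_{p,q}|_{[0,n]^2}$ decomposes as a sum of subspaces indexed by $(a,b) \in [0,n]^2$, and $\zeta$ decomposes through the summands $(P^a+Q^b)^\perp \otimes (P^a \cap Q^b)$ by Proposition \ref{prop:mdual}, I would verify the equality $\eta = \mu^L_{p,q}|_{[0,n]^2}$ one summand at a time. Fix $(a,b) \in [0,n]^2$. The plan is to work with the transpose: since $\zeta_{a,b} = {}^t r_{a,b}$, the restriction of $\eta$ to the $(a,b)$-summand is the transpose of
$$r_{a,b} \circ \delta_{[L]}: T_{[L]} \Pic^d(C) \to \Hom(P^a \cap Q^b, H/(P^a+Q^b)),$$
precomposed with the Serre-duality isomorphism $H^1(C, L(-ap-bq))^\vee \cong H^0(C, \omega_C \otimes L^\vee(ap+bq))$ that defines $\theta$ in that slot.

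The core computation is the identification of $r_{a,b} \circ \delta_{[L]}$ with the cup product. Under the standard isomorphism $T_{[L]}\Pic^d(C) \cong H^1(C, \cO_C)$ and the isomorphism $h_{a,b}: H/(P^a+Q^b) \xrightarrow{\sim} H^1(C, L(-ap-bq))$ coming from the long exact sequence of (\ref{eq:labSequence}), I claim that $r_{a,b} \circ \delta_{[L]}$ equals the cup product
$$\cup:\ H^1(C,\cO_C) \to \Hom\bigl(H^0(C, L(-ap-bq)), H^1(C, L(-ap-bq))\bigr).$$
This is the main content of the proposition. By Lemma \ref{lem:rabsnake}, $r_{a,b} \circ \delta_{[L]}(v)$ is the snake-lemma connecting map attached to the first-order deformation of the short exact sequence (\ref{eq:labSequence}) over $\Spec k[\eps]$ obtained by pulling back along $v$. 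I would compute this snake map using a \v{C}ech cover $\{U_i\}$ of $C$ and the \v{C}ech representative of $v$ as a $1$-cocycle $\{v_{ij}\} \in Z^1(\{U_i\}, \cO_C)$ coming from transition functions of the deformed Poincar\'e line bundle: given a section $s \in H^0(C, L(-ap-bq))$, lift locally to sections $s_i$ of $v^*\cL(-ap-bq)$, then $s_i - s_j = \eps \cdot v_{ij} s$, and pushing this through the diagram of Figure \ref{fig:snake} produces exactly the \v{C}ech $1$-cocycle representing $v \cup s \in H^1(C, L(-ap-bq))$. This is the standard identification of Kodaira--Spencer with cup product, and I expect it to be the main technical obstacle, as it requires carefully choosing the splittings $s, \phi_P, \phi_Q$ appearing in Lemma \ref{lem:deltaxDesc} compatibly with the \v{C}ech cover.

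Once cup product is identified, the proof finishes by a formal dualization. Serre duality identifies the transpose of the cup product pairing $H^1(C, \cO_C) \otimes H^0(C, L(-ap-bq)) \to H^1(C, L(-ap-bq))$ with the multiplication map
$$H^0(C, L(-ap-bq)) \otimes H^0(C, \omega_C \otimes L^\vee(ap+bq)) \to H^0(C, \omega_C),$$
which is precisely the $(a,b)$-summand of $\mu^L_{p,q}$. Tracing the definitions of $\theta$ (built from Serre duality) and $\zeta$ (whose $(a,b)$-component is ${}^t r_{a,b}$), the composition ${}^t \delta_{[L]} \circ \zeta \circ \theta$ on the $(a,b)$-summand agrees with this multiplication map. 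Summing over $(a,b)$ yields $\eta = \mu^L_{p,q}|_{[0,n]^2}$.
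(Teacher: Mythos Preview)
Your proposal is correct and follows essentially the same route as the paper: reduce to one summand $(a,b)$ at a time, identify $r_{a,b} \circ \delta_{[L]}(v)$ via Lemma \ref{lem:rabsnake} as a snake map, show this snake map is cup product with $v$, and then dualize via Serre duality to obtain the multiplication map.

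The one noteworthy difference is how the identification ``snake map $=$ cup product with $v$'' is made. You propose a direct \v{C}ech computation, choosing the splittings $s, \phi_P, \phi_Q$ compatibly with a cover; you correctly flag this as the main technical point. The paper sidesteps this by observing that the diagram of Figure \ref{fig:snake}, viewed at the level of sheaves, exhibits the two rows as an \emph{acyclic resolution} of the short exact sequence $0 \to L(-ap-bq) \to \cL(-ap-bq) \to L(-ap-bq) \to 0$ (since $L$, $L(-ap)$, $L(-bq)$ and their deformations are all nonspecial). Hence the snake map on global sections is automatically the cohomology boundary map $\partial$ of that sequence, which is cup product with the extension class $v \in H^1(C,\cO_C)$ by a standard Ext/Yoneda argument. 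This avoids having to choose splittings compatibly with a \v{C}ech cover and cleanly separates the two steps (snake $=$ boundary, boundary $=$ cup product). Your direct computation would also work and is in fact what the paper mentions as an alternative.
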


\begin{proof}
Let $\eta_{a,b} = \eta \circ \iota$ denote the restriction of $\eta$ to $H^0(C,L(-ap-bq)) \otimes H^0(C,\omega_C \otimes L^\vee(ap + bq) )$. Then ${}^t \eta_{a,b} = {}^t \iota \circ {}^t \theta \circ {}^t \zeta \circ \delta_{[L]} = h_{a,b} \circ r_{a,b} \circ \delta_{[L]} = h_{a,b} \circ s$, where $s$ is examined in Lemma \ref{lem:rabsnake}: it is the map
$$s: T_{[L]} \Pic^d(C) \to \Hom( H^0(C,L(-ap-bq)), H^1(C,L(-ap-bq))$$
such that for all $v \in T_{[L]} \Pic^d(C)$, the map $s(v)$ is equal to the snake map in Lemma \ref{lem:rabsnake}. That snake diagram has a cohomological interpretation. Fix $v \in T_{[L]} \Pic^d(C)$, corresponding to an extension $0 \to L \to \cL \to L \to 0$ of $L$ to a line bundle on $C \times_k \Spec k[\eps]$, and consider the following map of short exact sequences.
\begin{center}
\begin{tikzcd}
0 \ar[r] & L(-ap) \oplus L(-bq) \ar[r] \ar[d] &  \cL(-ap) \oplus \cL(-bq) \ar[r] \ar[d] & L(-ap) \oplus L(-bq) \ar[r] \ar[d] & 0 \\
0 \ar[r] & L \ar[r] & \cL \ar[r] & L \ar[r] & 0\\
\end{tikzcd}
\end{center}
None of these sheaves has higher cohomology, so this diagram gives an acyclic resolution of
\begin{equation}
\label{eq:LabSequence}
0 \to L(-ap-bq) \to \cL(-ap-bq) \to L(-ap-bq) \to 0.
\end{equation}
Taking global sections and forming the snake diagram gives the long exact cohomology sequence of (\ref{eq:LabSequence}), and the snake map of Figure \ref{fig:snake} is equal, up to $h_{a,b}$, to the boundary map 
\begin{equation}
\label{eq:bdyMap}
\partial:\ H^0(C, L(-ap-bq)) \to H^1(C, L(-ap-bq)).
\end{equation}

In other words, $\partial = h_{a,b} \circ r_{a,b} \circ \delta_{[L]}(v)$. 
On the other hand, $\partial$ is also given by taking the cup product with $v$, where we now regard $v$ as an element of $H^1(C, \cO_C)$.
In other words, the map
$${}^t \eta_{a,b}: T_{[L]} \Pic^d(C) \cong H^1(C,\cO_C) \to \Hom( H^0(C,L(-ap-bq)), H^1(C,L(-ap-bq)))$$
is given by
${}^t \eta_{a,b}(v)(\sigma) = v \cup \sigma.$
Dualizing and unwinding definitions, it follows that
$\eta_{a,b}$
is the cup product map. Considering all the $\eta_{a,b}$ together shows that $\eta = \mu^L_{p,q} |_{[0,n]^2}$. 
\end{proof}

\begin{cor}
\label{cor:mudelta}
Let $(C,p,q)$ be a twice-marked smooth curve over an algebraically closed field $k$. The degree-$d$ Brill-Noether flags of $(C,p,q)$ are versal at $[L] \in \Pic^d(C)$ if and only if the $[0,d-2g+1]^2$-coupled Petri map $\mu^L_{p,q} |_{[0,d-2g+1]^2}$ is injective.
\end{cor}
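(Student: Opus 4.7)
The corollary will follow almost immediately by chaining together the identifications already established. The plan is to reduce versality at $[L]$ to surjectivity of $\delta_{[L]}$, then use duality to pass to injectivity of the transpose, and finally invoke Proposition \ref{prop:etaIsMult} together with the fact that $\zeta$ and $\theta$ are isomorphisms.

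More precisely, first I would note that $\Pic^d(C)$ is smooth over $k$, so the localized version of Lemma \ref{lem:deltaxSmooth} applies: the pair $\cpb_d,\cqb_d$ is versal at $[L]$ if and only if the linear map $\delta_{[L]}: T_{[L]}\Pic^d(C) \to M_{[L]}$ is surjective. (Versality at a point means $p: \Fr(\cH_d) \to \Fl(A;n)\times \Fl(B;n)$ is smooth at some preimage $y$ of $[L]$, which for a morphism between smooth schemes of finite type over $k$ is equivalent to surjectivity of $dp_y$, and thence to surjectivity of $\delta_{[L]}$ by the commutative diagram of Lemma \ref{lem:deltax}; smoothness is then automatic on a neighborhood.)

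Next, since everything in sight is finite-dimensional over $k$, surjectivity of $\delta_{[L]}$ is equivalent to injectivity of the transpose ${}^t\delta_{[L]}: M_{[L]}^\vee \to H^0(C,\omega_C)$. By Proposition \ref{prop:etaIsMult}, the composition
\[
\mu^L_{p,q}\big|_{[0,n]^2} \;=\; {}^t\delta_{[L]} \circ \zeta \circ \theta,
\]
where $n = d+1-g$. Since $\theta$ (built from the cohomology boundary isomorphisms $h_{a,b}$) and $\zeta$ (Proposition \ref{prop:mdual}) are both isomorphisms, injectivity of $\mu^L_{p,q}|_{[0,n]^2}$ is equivalent to injectivity of ${}^t\delta_{[L]}$. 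Combining the three equivalences yields the corollary.

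There is essentially no obstacle here; the only point that requires a moment of care is checking that the pointwise version of Lemma \ref{lem:deltaxSmooth} is what we need. This is a matter of unwinding the definition of versality at a point, plus the standard fact that for a morphism of smooth finite-type $k$-schemes, surjectivity of the differential at a point is equivalent to smoothness of the morphism at that point. With that in hand, the proof is purely formal.
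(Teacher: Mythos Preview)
Your proposal is correct and matches exactly what the paper intends: the corollary is stated without proof precisely because it follows by composing Lemma~\ref{lem:deltaxSmooth} (pointwise, using smoothness of $\Pic^d(C)$), duality, and Proposition~\ref{prop:etaIsMult} together with the fact that $\zeta$ and $\theta$ are isomorphisms. Your careful remark about the pointwise form of Lemma~\ref{lem:deltaxSmooth} is the only subtlety, and you handle it correctly.
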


\section{Proof of the versality theorem}
\label{sec:proofs}

We now have all the pieces in place to prove Theorem \ref{thm:coupledPetri} on the $S$-coupled Petri condition, and the versality Theorem \ref{thm:versality}. All that remains is to use some properties of the moduli space of curves to obtain the desired results for all algebraically closed fields.

\begin{proof}[Proof of Theorem \ref{thm:versality}]
If $\pi: \cC \to B$ is a proper flat family of smooth curves with two disjoint sections $P,Q$, the degree-$d$ Brill-Noether flags globalize to $\cpb_d, \cqb_d$ in a vector bundle $\cH_d$ on $\Pic^d(\pi) \to B$. Define a morphism $\Fr(\cH_d) \to \Fl(A; n) \times \Fl(B;n)$ as in Definition \ref{defn:versal}. The locus where this map fails to be smooth is a closed subscheme, as is its image in $B$. So the locus of $b \in B$ where the degree-$d$ Brill-Noether flags are versal is open. Since $\cM_{g,2}$ is irreducible, we need only verify the existence of a twice-marked curve of genus $g$ with versal degree-$d$ Brill-Noether flags over some field in every characteristic. This can be done as follows: find a twice-marked elliptic curve $(C,p,q)$ with $p-q$ non-torsion, chain $g$ such curves together, and deform to an arithmetic surface over a discrete valuation ring with fraction field of the desired characteristic. By Theorem \ref{thm:petriSmoothing}, the geometric general fiber satisfies the fully coupled Petri condition, so by Corollary \ref{cor:mudelta} its Brill-Noether flags (in every degree) are versal.
 \end{proof}

\begin{proof}[Proof of Theorem \ref{thm:coupledPetri}]
Fix a genus $g$, an algebraically closed field $k$, and a finite set $S \subseteq \ZZ \times \ZZ$. Let $\cV \subseteq \cM_{g,2}$ denote the locus of twice-marked curves satisfying the $S$-coupled Petri condition, and for all $d \in \ZZ$ let $\cV_d$ denote the locus of twice-marked curves for which every degree-$d$ line bundle has injective $S$-coupled Petri map. So $\cV = \bigcap_{d \in \ZZ}\ \cV_d$. For all but finitely many $d \in \ZZ$, all elements of $\{d + a + b: (a,b) \in S \}$ are either less than $0$ or greater than $2g-2$; for line bundles $L$ of these degrees the space $T^L_{p,q}(S)$ is trivial, so the $S$-coupled Petri map is injective. Therefore all but finitely many $\cV_d$ are equal to all of $\cM_{g,2}$, so it suffices to fix a single integer $d$ and verify that $\cV_d$ is Zariski-dense.

For all $N \in \ZZ$, the $S$-coupled Petri map of a degree-$d$ line bundle $L$ is equal to the $S'$-coupled Petri map of the degree-$(d+2N)$ line bundle $L(Np+Nq)$, where $S' = \{(a+N,b+N): (a,b) \in S \}$. For $N$ sufficiently large, $S' \subseteq [0,d+2N -2g+1]^2$. By Corollary \ref{cor:mudelta}, $\cV_d$ contains the locus of twice-marked curves with versal degree-$(d+2N)$ Brill-Noether flags. This is dense by Theorem \ref{thm:versality}, hence $\cV_d$ is dense for all $d \in \ZZ$.
\end{proof}

\section{Brill-Noether degeneracy loci}
\label{sec:bnDegen}

This section analyzes the geometry of Brill-Noether degeneracy loci $W^\Pi_d(C,p,q)$ using the versality of Brill-Noether flags, and proves Theorem \ref{thm:bnDegen}. Assume we are in the following situation.

\begin{sit}
\label{sit:bnDegen}
Let $g,d$ be positive integers, $(C,p,q)$ a twice-marked smooth curve of genus $g$, and $\Pi$ a dot array of size $r+1$ with $g-d+r \geq 0$. Also assume that $\Pi \subseteq [0,d]^2$.
\end{sit}

The simplifying assumption $\Pi \subseteq [0,d]^2$ is harmless; see the proof of Theorem \ref{thm:bnDegen}.

We begin by specifying the scheme structure on $W^\Pi_d(C,p,q)$ and interpreting it as a degeneracy locus of Brill-Noether flags. First, we recall the construction of the scheme structure of $W^r_d(C)$ (see e.g. \cite[$\S \mathrm{IV}.3$]{acgh}): the set-theoretic locus $\{ [L] \in \Pic^d(C): h^0(C,L) \geq r+1 \} = \{ [L] \in \Pic^d(C): h^1(C,L) \geq g-d+r \}$ has a natural scheme structure given by the $(g-d+r)$th Fitting ideal of $R^1 \nu_\ast \cL$, where $\nu: C \times \Pic^d(C) \to \Pic^d(C)$ is the projection and $\cL$ is a Poincar\'e line bundle. The Fitting ideal may be computed as a determinantal locus using any resolution by vector bundles. In particular, if we choose any two integers $a,b \geq 2g-1-d$ and let $d' = d+a+b$, we have a resolution $\cP^a_{d'} \oplus \cQ^b_{d'} \to \cH_{d'} \to R^1 \nu_\ast \cL \to 0$, and the Fitting ideal is therefore equal to the determinantal ideal defining the standard scheme structure of
$\{ x \in \Pic^{d'}(C): \dim (\cP^a_{d'})_x \cap (\cQ^b_{d'})_x \geq r+1\},$
as described in Section \ref{sec:flags}.

Now, we define the scheme structure on $W^\Pi_d(C,p,q)$ by a scheme-theoretic intersection
\begin{equation}
\label{eq:wPiScheme}
W^\Pi_d(C,p,q) = \bigcap_{(a,b) \in \NN^2} \tw_{ap+bq} \left( W^{r(a,b)-1}_{d-a-b}(C) \right).
\end{equation}
This in turn may be rephrased as a degeneracy locus of Brill-Noether flags of larger degree. For any two integers $M,N \geq 2g-1$, we may define $d' = d + M + N$ and write equivalently
\begin{equation}
\label{eq:wPiFlags}
\begin{split}
W^\Pi_d(C,p,q) = \tw_{-Mp-Nq} \big(
\{ x \in \Pic^{d'}(C): & \dim (\cP^{M+a}_{d'})_x \cap (\cQ^{N+b}_{d'})_x \geq r^\Pi(a,b)\\& \mbox{ for all } a,b \in [0,d] \} \big).
\end{split}
\end{equation}
The bounds $M,N \geq 2g-1$ ensure that $M+a, N+b \leq d'-2g+1$ for all $a,b \in [0,d]$, so that the flag elements $\cP^{M+a}_{d'}, \cQ^{N+b}_{d'}$ exist.

Now, we wish to write this locus as a degeneracy locus $D_\sigma(\cpb_{d'}; \cqb_{d'})$ of the form discussed in Section \ref{sec:flags}. To do so, we must convert the dot pattern $\Pi$ to a permutation. This requires a few combinatorial preliminaries.

\subsection{\texorpdfstring{$(d,g)$}{(d,g)}-confined permutations}
\label{ss:dgConfined}
\begin{figure}
\centering
\begin{tabular}{cccc}
\begin{tikzpicture}[scale=0.25]
\foreach \x in {-5,...,6} \draw (\x-0.5, 6.5) -- (\x - 0.5, -6.5);
\foreach \y in {-5,...,6} \draw (-6.5, 0.5-\y) -- (6.5, 0.5-\y);
\draw[ultra thick] (-0.5, 6.5) -- (-0.5, -6.5); 
\draw[ultra thick] (-6.5, 0.5) -- (6.5, 0.5);
\fdot{0}{1} \fdot{2}{0} \fdot{3}{3}
\opendot{1}{-3} \opendot{4}{-4} \opendot{5}{-5} \opendot{6}{-6}
\opendot{-1}{-2} \opendot{-2}{-1} 
\opendot{-3}{2} \opendot{-4}{4} \opendot{-5}{5} \opendot{-6}{6}
\draw[ultra thick, dashed] (3.5, -3.5) rectangle (-3.5, 3.5);
\end{tikzpicture}
&
\begin{tikzpicture}[scale=0.25]
\foreach \x in {-5,...,6} \draw (\x-0.5, 6.5) -- (\x - 0.5, -6.5);
\foreach \y in {-5,...,6} \draw (-6.5, 0.5-\y) -- (6.5, 0.5-\y);
\draw[ultra thick] (-0.5, 6.5) -- (-0.5, -6.5); 
\draw[ultra thick] (-6.5, 0.5) -- (6.5, 0.5);
\fdot{0}{1} \fdot{2}{0} \fdot{3}{3}
\opendot{1}{-2} \opendot{4}{-3} \opendot{5}{-4} \opendot{6}{-5}
\opendot{-1}{-1}
\opendot{-2}{2} \opendot{-3}{4} \opendot{-4}{5} \opendot{-5}{6}
\draw[ultra thick, dashed] (3.5, -3.5) rectangle (-2.5, 2.5);
\end{tikzpicture}
&
\begin{tikzpicture}[scale=0.25]
\foreach \x in {-5,...,6} \draw (\x-0.5, 6.5) -- (\x - 0.5, -6.5);
\foreach \y in {-5,...,6} \draw (-6.5, 0.5-\y) -- (6.5, 0.5-\y);
\draw[ultra thick] (-0.5, 6.5) -- (-0.5, -6.5); 
\draw[ultra thick] (-6.5, 0.5) -- (6.5, 0.5);
\fdot{0}{1} \fdot{2}{0} \fdot{3}{3}
\opendot{1}{-1} \opendot{4}{-2} \opendot{5}{-3} \opendot{6}{-4}
\opendot{-1}{2} \opendot{-2}{4} \opendot{-3}{5} \opendot{-4}{6}
\draw[ultra thick, dashed] (3.5, -3.5) rectangle (-1.5, 1.5);
\end{tikzpicture}
& 
\begin{tikzpicture}[scale=0.25]
\foreach \x in {-5,...,6} \draw (\x-0.5, 6.5) -- (\x - 0.5, -6.5);
\foreach \y in {-5,...,6} \draw (-6.5, 0.5-\y) -- (6.5, 0.5-\y);
\draw[ultra thick] (-0.5, 6.5) -- (-0.5, -6.5); 
\draw[ultra thick] (-6.5, 0.5) -- (6.5, 0.5);
\fdot{0}{1} \fdot{2}{0} \fdot{3}{3}
\opendot{1}{2} \opendot{4}{-1} \opendot{5}{-2} \opendot{6}{-3}
\opendot{-1}{4} \opendot{-2}{5} \opendot{-3}{6}
\draw[ultra thick, dashed] (3.5, -3.5) rectangle (-0.5, 0.5);
\end{tikzpicture}
\\
$g-d+r = 2$ & $g-d+r=1$ & $g-d+r=0$ & $g-d+r = -1$\\
\end{tabular}
\caption{Extension of a dot pattern with $r+1 = |\Pi| = 3$ to a $(d,g)$-confined permutation, for various values of $d-g$. The dot pattern in the last example is not $(d,g)$-confined. The dashed squares are explained in Lemma \ref{lem:bijectiveSquare}.}
\label{fig:dpToPerm}
\end{figure}

We wish to extend a dot pattern  $\Pi \subseteq \NN \times \NN$ to a dot pattern in $\ZZ \times \ZZ$ in the ``most efficient way,'' so that a rank function for $L$ may be translated into a rank function for $L(Mp+Nq)$ that results in the same degeneracy locus. 
\begin{defn}
\label{def:dgConfined}
A permutation $\pi: \ZZ \to \ZZ$ is called \emph{$(d,g)$-confined} if both $\pi, \pi^{-1}$ are decreasing on $(-\infty, -1]$ and $\omega_{d-g} \pi$ has finite length.
\end{defn}
Note that the definition of $(d,g)$-confined actually depends only on the difference $d-g$. We may extend Equation \ref{eq:rsigma} 
to $(d,g)$-confined permutations to define, for all $a,b \in \ZZ$,
\begin{equation}
\label{eq:rpi}
r^\pi(a,b) = \# \{ a' \geq a: \pi(a') \geq b\},
\end{equation}
and the requirement that $\omega_{d-g} \pi$ has finite length ensures that $r^\pi(-a,-b) = d+a+b+1-g$ for sufficiently large $a,b$, in accordance with Riemann--Roch.

\begin{lemma}
\label{lem:uniqueConfined}
Fix positive integers $d,g$. For every dot pattern $\Pi$ with $|\Pi| \geq d+1-g$, there is a unique $(d,g)$-confined permutation $\pi$ such that
$\Pi = \{ (a, \pi(a)): a \geq 0 \mbox{ and } \pi(a) \geq 0 \}.$
\end{lemma}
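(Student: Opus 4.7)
The plan is to reconstruct $\pi$ canonically from $\Pi$. Since any $(d,g)$-confined $\pi$ agrees with $\omega_{d-g}$ outside the finite support $F$ of $\omega_{d-g}\pi$, I need only determine $F$ and the values of $\pi$ on $F$ from the data of $\Pi$.

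Let $A, B \subseteq \NN$ denote the row and column sets of $\Pi$. Define $F = F_A \cup F_R \cup F_C$, where
\begin{align*}
F_A &= \{a \in A : (a, d-g-a) \notin \Pi\}, \\
F_R &= (\NN \setminus A) \cap [0, d-g], \\
F_C &= \{a < 0 : d-g-a \in B\}.
\end{align*}
Each element in these three sets is forced into the support: the dot in an $F_A$-row is not on the anti-diagonal; an $F_R$-row has no dot but its default column is nonnegative; and an $F_C$-row has its default column already claimed by $A$. Off $F$ set $\pi = \omega_{d-g}$, and on $F_A = A \cap F$ let $\pi(a)$ be the column of the dot in row $a$. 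By bijectivity, $\pi$ on $F_R \cup F_C$ must hit the residual set $\omega_{d-g}(F) \setminus \pi(F_A)$; the decreasing conditions on $\pi$ and $\pi^{-1}$ over $\ZZ_{<0}$ force the unique order-preserving pairing between these rows and the residual image values.

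For uniqueness, the support of any other $(d,g)$-confined $\pi'$ realizing $\Pi$ must contain $F_A$, $F_R$, and $F_C$ by the same forcing, and cannot be strictly larger without violating either finite length of $\omega_{d-g}\pi'$ or the decreasing conditions; hence $F' = F$ and $\pi|_F$ is pinned down. Existence then follows by direct verification that the constructed $\pi$ satisfies all stated conditions.

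The main obstacle is the combinatorial check that the required order-preserving pairing on $F_R \cup F_C$ exists. This comes down to verifying that the residual set $\omega_{d-g}(F) \setminus \pi(F_A)$ splits into negative and nonnegative parts of the correct sizes to accommodate $F_R$ (which must map to negatives) and $F_C$ (which may map to either), in a way compatible with monotonicity. By direct count this reduces to the inequality $|A| \geq d-g+1$, which is exactly the hypothesis $|\Pi| \geq d+1-g$.
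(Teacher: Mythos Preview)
Your construction has a genuine gap: the set $F = F_A \cup F_R \cup F_C$ is \emph{not} in general the full support of $\omega_{d-g}\pi$, and setting $\pi = \omega_{d-g}$ off $F$ produces a map that fails the decreasing condition on $(-\infty,-1]$.

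Here is a small counterexample. Take $d-g=0$ and $\Pi = \{(0,2)\}$, so $A=\{0\}$, $B=\{2\}$, and the size hypothesis $|\Pi|\geq d+1-g=1$ holds. Your sets are $F_A=\{0\}$ (since the dot is off the anti-diagonal), $F_R = (\NN\setminus\{0\})\cap[0,0]=\emptyset$, and $F_C=\{a<0:-a=2\}=\{-2\}$, so $F=\{0,-2\}$. The residual value is $\omega_0(F)\setminus\{2\}=\{0\}$, forcing $\pi(-2)=0$; off $F$ you set $\pi(n)=-n$. But then $\pi(-1)=1$ and $\pi(-2)=0$, so for $-2<-1$ we get $\pi(-2)<\pi(-1)$, contradicting that $\pi$ is decreasing on $(-\infty,-1]$. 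The correct $(d,g)$-confined permutation here has $\pi(-1)=0$, $\pi(-2)=1$, $\pi(-3)=3$, so its support is $\{-2,-1,0\}$: the column $2$ being occupied by row $0$ displaces row $-2$, which in turn displaces row $-1$, a cascade your $F$ does not capture.

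This also undermines your uniqueness argument: the support of a valid $\pi$ \emph{is} strictly larger than $F$ in general, so you cannot argue that enlarging beyond $F$ forces a violation. The paper sidesteps all of this by not attempting to identify the support at all. Instead it observes that the conditions on $\pi$ are equivalent to: (1) $\pi|_A$ has graph $\Pi$; (2) $\pi|_{\ZZ\setminus A}$ is the (automatically unique) decreasing bijection $\ZZ\setminus A\to\ZZ\setminus B$ agreeing with $\omega_{d-g}$ cofinitely; and (3) $\pi(\NN\setminus A)\cap(\NN\setminus B)=\emptyset$. Conditions (1)--(2) always have a unique solution, and a simple count shows (3) holds exactly when $|\Pi|\geq d+1-g$. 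This decomposition into ``values on $A$'' versus ``a single decreasing bijection on the complement'' is what makes the argument clean; trying to pin down the support first is what leads you astray.
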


\begin{proof}

We may construct $\pi$ by choosing $N \gg 0$ and extending $\Pi$ to a dot array in the larger square $[-N,d-g+N]^2$ that is the graph of a permutation, and then extending beyond the square by $\pi(n) = d-g-n$ for all other $n$. This is illustrated in Figure \ref{fig:dpToPerm}. The extension of $\Pi$ to a permutation of $[-N,d-g+N]$ is described in \cite[p. 9]{fultonPragacz}. This extension can be done without adding any dots to $[0,N]^2$ provided that there is enough space in the two upper quadrants fill all empty columns, i.e. $N \geq d-g+N+1 - |\Pi|$, or equivalently $|\Pi| \geq d+1-g$. The fourth picture in Figure \ref{fig:dpToPerm} illustrates a case where this inequality fails, and therefore the extension cannot be done without adding a dot to $\Pi$.
\end{proof}

\begin{lemma}
\label{lem:essVersions}
Let $\Pi$ be a dot pattern of size $r+1$, and let $a_0, b_0$ be the minimum row and column occurring in $\Pi$. Let $\pi$ be its $(d,g)$-confined permutation, where $g-d+r \geq 0$. Then $r^\Pi(a,b) = r^\pi(a,b)$ for all $a,b \geq 0$, and $\Ess(\Pi) = \Ess(\pi)$ unless $g-d+r=0$ and $a_0 = b_0 = 0$. In the case $g-d+r=0 , a_0 = b_0 = 0$, we have instead $(0,0) \not\in \Ess(\pi)$ and $\Ess(\Pi) = \Ess(\pi) \cup \{ (0,0) \}$.
\end{lemma}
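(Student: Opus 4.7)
My plan is to exploit that by Remark \ref{rem:extendRPi}, $r^\Pi = r^\pi$ on $\NN^2$, so the two essential sets automatically agree on $\{(a,b) : a \geq 1, b \geq 1\}$. Potential discrepancies can only arise along the coordinate axes, where the convention $r^\Pi(-1, b) = r^\Pi(0, b)$ (resp.\ $r^\Pi(a, -1) = r^\Pi(a, 0)$) may differ from the actual values of $r^\pi(-1, b)$ (resp.\ $r^\pi(a, -1)$). Since $r^\pi(-1, b) - r^\pi(0, b)$ equals $1$ when $\pi(-1) \geq b$ and $0$ otherwise, the discrepancy is controlled by whether $\pi(-1) \geq b$, and symmetrically by $\pi^{-1}(-1) \geq a$. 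I would also immediately verify that $\Ess(\pi) \subseteq \NN^2$: for $a \leq -1$, both $a-1$ and $a$ lie in $(-\infty, -1]$ where $\pi$ is decreasing, so $\pi(a-1) > \pi(a)$ contradicts the strict inequality $\pi(a-1) < b \leq \pi(a)$ implicit in membership; symmetrically for $b \leq -1$.

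The key computational step is to pin down $\pi(-1)$ (and by symmetry $\pi^{-1}(-1)$). I would take $N$ large enough that $A \cup B \subseteq [0, N-1]$ and $|d-g| < N$, so that $\pi|_{[-N+1, N-1]}$ is a bijection to $[d-g-N+1, d-g+N-1]$. Using Lemma \ref{lem:uniqueConfined}(3), $\pi((-\infty, -1]) \cap \NN = \NN \setminus B$ and $\pi((-\infty, -N]) = [d-g+N, \infty)$. Subtracting and simplifying gives $|\pi([-N+1, -1]) \cap \NN| = N + d - g - r - 1$, and hence $|\pi([-N+1, -1]) \cap \ZZ_{<0}| = g - d + r$. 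Since $\pi$ is decreasing on $(-\infty, -1]$ with $\pi(-1)$ the minimum value, this yields the dichotomy: if $g - d + r \geq 1$ then $\pi(-1) < 0$; if $g - d + r = 0$ then $\pi((-\infty, -1]) = \NN \setminus B$ bijectively and decreasingly, so $\pi(-1) = \min(\NN \setminus B)$. The analogous statement for $\pi^{-1}(-1)$ follows by transposing $\Pi$.

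With $\pi(-1)$ in hand, the case analysis is routine. When $g - d + r \geq 1$, $\pi(-1)$ and $\pi^{-1}(-1)$ are both negative, so $r^\pi$ matches the convention of $r^\Pi$ at the boundary and $\Ess(\Pi) = \Ess(\pi)$. When $g - d + r = 0$, set $\beta = \min(\NN \setminus B) = \pi(-1)$ and $\alpha = \min(\NN \setminus A) = \pi^{-1}(-1)$. For $(0, b) \in \Ess(\Pi)$ with $b \geq 1$, the defining rank-differences force $b \in B$ and $b - 1 \notin B$; together with $[0, \beta - 1] \subseteq B$ this gives $b > \beta$, so $\pi(-1) < b$ and $(0, b) \in \Ess(\pi)$. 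The converse and the symmetric statement for $(a, 0)$ with $a \geq 1$ (where $(a, 0) \in \Ess(\Pi)$ forces $a - 1 \notin A$, whence $\alpha \leq a - 1 < a$) are analogous.

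The only remaining and genuinely exceptional point is $(0, 0)$. Unwinding the conventions, $(0, 0) \in \Ess(\Pi)$ if and only if $a_0 = 0$ and $b_0 = 0$, since these are exactly the conditions for $r^\Pi(0, 0)$ to strictly exceed both $r^\Pi(1, 0)$ and $r^\Pi(0, 1)$. But $(0, 0) \in \Ess(\pi)$ demands $\pi(-1) < 0$, which fails in the case $g - d + r = 0$. Therefore $(0, 0)$ belongs to $\Ess(\Pi) \setminus \Ess(\pi)$ exactly in the exceptional case of the lemma. The main obstacle is the counting argument determining $\pi(-1)$; once that is secured, the verification in each subcase is just careful bookkeeping.
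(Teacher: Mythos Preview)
Your proposal is correct and follows essentially the same approach as the paper. Both arguments observe that $\Ess(\pi)\subseteq\NN^2$ from monotonicity, reduce the comparison to points on the coordinate axes, pin down $\pi(-1)$ and $\pi^{-1}(-1)$ (the paper cites directly from the construction in the proof of Lemma~\ref{lem:uniqueConfined} that $\pi^{-1}(-1)=\min\big([-(g-d+r),\infty)\setminus A\big)$, while you obtain the same information via your counting argument), and treat $(0,0)$ separately; the only difference is organizational, as you split cases by the value of $g-d+r$ first whereas the paper splits by which axis first.
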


\begin{lemma}
\label{lem:rhoPerm}
If the dot pattern $\Pi$ has $|\Pi| \geq d+1-g$ and associated $(d,g)$-confined permutation $\pi$, then $\rho_g(d,\Pi) = g - \ell(\omega_{d-g} \pi)$.
\end{lemma}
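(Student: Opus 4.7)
Writing $\sigma = \omega_{d-g}\pi$, the identity $\sigma(n) = (d-g) - \pi(n)$ immediately gives $\sigma(i) > \sigma(j) \Leftrightarrow \pi(i) < \pi(j)$, so $\ell(\omega_{d-g}\pi)$ counts the \emph{non-inversions} of $\pi$, i.e.\ pairs $i<j$ in $\ZZ$ with $\pi(i)<\pi(j)$. My plan is to partition these pairs according to the membership of $i,j$ in $A = \{a_0 < \cdots < a_r\}$ (the rows of $\Pi$) and evaluate each piece. Let $\tau$ be the permutation of $\{0,\ldots,r\}$ defined by $\pi(a_k) = b_{\tau(k)}$. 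The case $i,j \in A$ counts pairs $k<l$ with $\tau(k)<\tau(l)$, which is exactly the ascent count $\#\{(a,b),(a',b') \in \Pi : a<a',\ b<b'\}$ appearing in $\rho_g(d,\Pi)$. The case $i,j \in \ZZ\setminus A$ contributes nothing, since $\pi|_{\ZZ\setminus A}$ is decreasing.

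The mixed contribution is where the real work lies. Let $B' = \{d-g-b : b \in B\}$, and choose a window $[M,N]$ containing $A \cup B'$ large enough that $\sigma$ acts as the identity outside it. Enumerate $[M,N]\setminus A = \{j_1<\cdots<j_m\}$ and $[M,N]\setminus B' = \{v_1<\cdots<v_m\}$; since $\sigma|_{\ZZ\setminus A}$ is the unique increasing bijection onto $\ZZ\setminus B'$ that is the identity at infinity, one has $\sigma(j_t)=v_t$. For each $k$, set $u_k = \#\{t : j_t < a_k\}$ and $t_k = \#\{t : v_t < s_k\}$ with $s_k = \sigma(a_k)$. A short case analysis (splitting on the sign of $u_k - t_k$) shows that the mixed non-inversions of $\pi$ involving $a_k$ number $|u_k - t_k|$.

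A direct count gives $u_k = (a_k - M) - k$ and $t_k = (s_k - M) - (r-\tau(k))$: the elements of $A$ below $a_k$ are $a_0,\ldots,a_{k-1}$, and the elements of $B'$ below $s_k = d-g-b_{\tau(k)}$ are the $d-g-b_j$ with $j>\tau(k)$. Substitution collapses to
\[
u_k - t_k = (a_k - k) + (b_{\tau(k)} - \tau(k)) + (g-d+r),
\]
a sum of three nonnegative terms --- the first two by strict monotonicity of the $a$- and $b$-sequences in $\NN$, the third by the hypothesis $|\Pi|\geq d+1-g$. Thus $|u_k-t_k|=u_k-t_k$. Summing over $k$ and using that $\tau$ permutes $\{0,\ldots,r\}$ (so $\sum_k(b_{\tau(k)}-\tau(k)) = \sum_i(b_i-i)$), the total mixed contribution is $(r+1)(g-d+r) + \sum(a_i-i) + \sum(b_i-i)$, which together with the ascent count matches $g - \rho_g(d,\Pi)$ term by term. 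The only delicate step is the identity and uniform nonnegativity of $u_k - t_k$; everything else is bookkeeping, so the main obstacle is simply to set up the window $[M,N]$ and the enumerations $\{j_t\},\{v_t\}$ cleanly enough that the cancellation becomes transparent.
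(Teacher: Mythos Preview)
Your proof is correct. The paper itself \emph{omits} the proof of this lemma, offering only Figure~\ref{fig:rhoExample} as a visual illustration, so you have in fact supplied what the paper left to the reader.

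The decomposition suggested by the paper's figure is slightly different from yours. Both begin by observing that $\ell(\omega_{d-g}\pi)$ counts non-inversions of $\pi$ and that non-inversions among pairs in $\ZZ\setminus A$ vanish while those within $A$ give the ascent count. For the mixed pairs, the paper's figure partitions according to which quadrant of $\ZZ^2$ the non-$\Pi$ dot $(n,\pi(n))$ lies in: the $g-d+r$ open dots with both coordinates negative each form a non-inversion with every element of $\Pi$, giving $(r+1)(g-d+r)$; the open dots with nonnegative row but negative column contribute $\sum_i(a_i-i)$; those with negative row but nonnegative column contribute $\sum_i(b_i-i)$; and the hypothesis $|\Pi|\geq d+1-g$ guarantees there are no open dots in $\NN\times\NN$. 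Your approach instead treats all mixed pairs uniformly via the window enumeration and the identity $u_k-t_k=(a_k-k)+(b_{\tau(k)}-\tau(k))+(g-d+r)$, whose three-term nonnegativity lets you drop the absolute value and sum directly. The quadrant picture is more visual and explains each term of $\rho_g(d,\Pi)$ separately; your argument is more algebraic but entirely self-contained and avoids any case analysis on the position of the non-$\Pi$ dot.
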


We omit the straightforward proofs of Lemmas \ref{lem:rhoPerm} and \ref{lem:essVersions}. See Figure \ref{fig:rhoExample} for an illustration of Lemma \ref{lem:rhoPerm}.
\begin{figure}
\centering
\begin{tabular}{cccc}
\begin{tikzpicture}[scale=0.25]
\draw[] (-0.5, 6.5) -- (-0.5, -6.5); 
\draw[] (-6.5, 0.5) -- (6.5, 0.5);
\fdot{0}{1} \fdot{2}{0} \fdot{3}{3}
\opendot{1}{-3} \opendot{4}{-4} \opendot{5}{-5} \opendot{6}{-6}
\opendot{-1}{-2} \opendot{-2}{-1} 
\opendot{-3}{2} \opendot{-4}{4} \opendot{-5}{5} \opendot{-6}{6}
\draw[thick] (1,0) to[in=0, out=90] (-1,2);
\draw[thick] (1,0) to[in=0, out=90] (-2,1);
\draw[thick] (0,-2) to[in=-90, out=180] (-1,2);
\draw[thick] (0,-2) to[in=-90, out=180] (-2,1);
\draw[thick] (3,-3) to[in=-60, out=135] (-1,2);
\draw[thick] (3,-3) to[in=-30, out=135] (-2,1);
\end{tikzpicture}
&
\begin{tikzpicture}[scale=0.25]
\draw[] (-0.5, 6.5) -- (-0.5, -6.5); 
\draw[] (-6.5, 0.5) -- (6.5, 0.5);
\fdot{0}{1} \fdot{2}{0} \fdot{3}{3}
\opendot{1}{-3} \opendot{4}{-4} \opendot{5}{-5} \opendot{6}{-6}
\opendot{-1}{-2} \opendot{-2}{-1} 
\opendot{-3}{2} \opendot{-4}{4} \opendot{-5}{5} \opendot{-6}{6}
\draw[thick] (0,-2) to[in=-90,out=180] (-3,-1);
\draw[thick] (3,-3) to[in=-90,out=180] (-3,-1);
\end{tikzpicture}&
\begin{tikzpicture}[scale=0.25]
\draw[] (-0.5, 6.5) -- (-0.5, -6.5); 
\draw[] (-6.5, 0.5) -- (6.5, 0.5);
\fdot{0}{1} \fdot{2}{0} \fdot{3}{3}
\opendot{1}{-3} \opendot{4}{-4} \opendot{5}{-5} \opendot{6}{-6}
\opendot{-1}{-2} \opendot{-2}{-1} 
\opendot{-3}{2} \opendot{-4}{4} \opendot{-5}{5} \opendot{-6}{6}
\draw[thick] (3,-3) to[in=-60, out=90] (2,3);
\end{tikzpicture}&
\begin{tikzpicture}[scale=0.25]
\draw[] (-0.5, 6.5) -- (-0.5, -6.5); 
\draw[] (-6.5, 0.5) -- (6.5, 0.5);
\fdot{0}{1} \fdot{2}{0} \fdot{3}{3}
\opendot{1}{-3} \opendot{4}{-4} \opendot{5}{-5} \opendot{6}{-6}
\opendot{-1}{-2} \opendot{-2}{-1} 
\opendot{-3}{2} \opendot{-4}{4} \opendot{-5}{5} \opendot{-6}{6}
\draw[thick] (3,-3) to[in=0,out=135] (0,-2);
\draw[thick] (3,-3) to[in=-90,out=135] (1,0);
\end{tikzpicture}
\\
 $(r+1)(g-d+r)$
& $\sum_{i=0}^r (a_i-i) $
& $\sum_{i=0}^r (b_i-i)$ 
& inversions within $\Pi$
\end{tabular}
\caption{The number of non-inversions of $\pi$ is $g-\rho_g(d, \Pi)$.}
\label{fig:rhoExample}
\end{figure}

In the proof of Theorem \ref{thm:bnDegen}, the replacement of a line bundle $L$ by $L(ap + bq)$ has the effect of sliding the graph of $\pi$ down $a$ steps and to the right $b$ steps. We will want to do this in such a way that the portion in $\NN \times \NN$ is the graph of a permutation of $[0,n]$ for some integer $n$. The dashed squares in Figure \ref{fig:dpToPerm} show what we are looking for: squares that enclose the graph of a permutation. The following Lemma formalizes how such squares can be found, and follows from examining the construction in the proof of Lemma \ref{lem:uniqueConfined}.

\begin{lemma}
\label{lem:bijectiveSquare}
Let $\pi$ be the $(d,g)$-confined permutation associated to a dot pattern $\Pi$ of size $r+1$, and let $a_r, b_r$ be the largest row and largest column occurring in $\Pi$, respectively. Then $\pi$ restricts to a bijection $[d-g-b_r, a_r] \to [d-g-a_r, b_r]$, and $\pi(n) = d-g-n$ for all $n$ such that $n < d-g-b_r$ or $n > a_r$. Also, $\pi$ is decreasing on $[a_r, \infty)$, and $\pi^{-1}$ is decreasing on $[b_r, \infty)$. 
\end{lemma}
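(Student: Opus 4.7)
\medskip

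\noindent\textbf{Proof proposal.} The plan is to construct a permutation $\tilde\pi$ having all the properties asserted in the lemma and then invoke uniqueness from Lemma \ref{lem:uniqueConfined} to conclude $\tilde\pi=\pi$. Specifically, I would define $\tilde\pi$ piecewise: set $\tilde\pi(n)=d-g-n$ for $n>a_r$ and for $n<d-g-b_r$; let $\tilde\pi|_A\colon A\to B$ be the bijection whose graph is $\Pi$; and on the remaining finite set $[d-g-b_r,a_r]\setminus A$, let $\tilde\pi$ be the unique order-reversing bijection to $[d-g-a_r,b_r]\setminus B$. The first step is to verify this is well-defined: the inequality $g-d+r\geq 0$ together with $a_r,b_r\geq r$ gives $d-g-b_r\leq 0$ and $d-g-a_r\leq 0$, so $A\subseteq[d-g-b_r,a_r]$ and $B\subseteq[d-g-a_r,b_r]$; a short count then shows $|[d-g-b_r,a_r]\setminus A|=|[d-g-a_r,b_r]\setminus B|$, so the third clause makes sense.

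Next I would check that $\tilde\pi$ satisfies the three characterizing properties from the proof of Lemma \ref{lem:uniqueConfined}. Properties (1) and (2) (that $\tilde\pi|_A$ realizes $\Pi$ and that $\tilde\pi|_{\ZZ\setminus A}$ is a decreasing bijection to $\ZZ\setminus B$ asymptotic to $n\mapsto d-g-n$) are immediate from the construction, once one checks that the three pieces of $\tilde\pi|_{\ZZ\setminus A}$ fit together monotonically: the images lie in $(-\infty,d-g-a_r-1]$, $[d-g-a_r,b_r]\setminus B$, and $[b_r+1,\infty)$ respectively, which is a decreasing partition of $\ZZ\setminus B$ matching the decreasing partition of $\ZZ\setminus A$ into $(a_r,\infty)$, $[d-g-b_r,a_r]\setminus A$, and $(-\infty,d-g-b_r-1]$. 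For property (3), disjointness of $\tilde\pi(\NN\setminus A)$ from $\NN\setminus B$, the key computation is that for $n>a_r$ one has $\tilde\pi(n)=d-g-n\leq d-g-a_r-1\leq -(g-d+r)-1\leq -1$, so $\tilde\pi(n)<0$; meanwhile the $a_r-r$ elements of $[0,a_r]\setminus A$ are the largest elements of the domain of the interior decreasing bijection, so they map to the $a_r-r$ smallest elements of $[d-g-a_r,b_r]\setminus B$, which lie in $[d-g-a_r,-(g-d+r)-1]\subseteq(-\infty,-1]$. This uses the hypothesis $g-d+r\geq 0$ in an essential way.

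Having identified $\pi=\tilde\pi$, all four claims of the lemma follow directly: the bijectivity on $[d-g-b_r,a_r]\to[d-g-a_r,b_r]$ is built in; the equality $\pi(n)=d-g-n$ outside that interval is built in; and for the monotonicity on $[a_r,\infty)$, we have $\pi$ decreasing on $(a_r,\infty)$ by construction, while $\pi(a_r)\in B\subseteq\NN$ and $\pi(a_r+1)=d-g-a_r-1<0$ give $\pi(a_r)>\pi(a_r+1)$. The decreasing behavior of $\pi^{-1}$ on $[b_r,\infty)$ follows by applying the same argument to the transposed dot pattern, whose $(d,g)$-confined permutation is $\pi^{-1}$.

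I expect the main obstacle to be the verification that $\tilde\pi(\NN\setminus A)\cap(\NN\setminus B)=\emptyset$: tracking precisely where elements of $[0,a_r]\setminus A$ land under the interior decreasing bijection is the only step that is not purely formal and is where the hypothesis $g-d+r\geq 0$ is actually used.
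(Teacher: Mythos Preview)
Your proof is correct. Both your argument and the paper's rest on the same characterization from Lemma~\ref{lem:uniqueConfined}, but you run it in the opposite direction: you build a candidate $\tilde\pi$ with the desired structure and then invoke uniqueness, whereas the paper starts from the already-constructed $\pi$ and reads off its form directly. Concretely, the paper observes that $\pi|_{\ZZ\setminus A}$ is a decreasing bijection onto $\ZZ\setminus B$ (this is condition~(2) from the proof of Lemma~\ref{lem:uniqueConfined}), and that the finite-length hypothesis pins it down to $n\mapsto d-g-n$ on $(-\infty,d-g-b_r-1]\cup[a_r+1,\infty)$; the bijection on the complementary interval and the monotonicity at the boundary then follow immediately. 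This is shorter because it never revisits condition~(3). Your route has the compensating virtue of making the role of the hypothesis $g-d+r\geq 0$ completely explicit: it is exactly what forces the images of $[0,a_r]\setminus A$ under the interior decreasing bijection to land in the negative integers, which is precisely your verification of~(3). In the paper's argument this hypothesis is absorbed into the mere existence of $\pi$, so it is present but invisible.
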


\begin{cor}
\label{cor:piSlide}
Denote by $\alpha_m$ the ``add $m$'' permutation $\alpha_m(n) = m+n$.
If $M \geq b_r-(d-g)$ and $N \geq a_r - (d-g)$, then the permutation $\alpha_N \pi \alpha_M^{-1}$ restricts to a bijection from $[0, M+N + d-g]$ to itself that is decreasing on $[a_r+M,\infty)$, and its inverse is decreasing on $[b_r+N, \infty)$.
\end{cor}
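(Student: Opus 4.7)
The plan is to simply translate Lemma \ref{lem:bijectiveSquare} along the vertical and horizontal axes by $N$ and $M$ respectively, verifying that the hypotheses on $M$ and $N$ are exactly what is needed to align the ``core'' bijective square with the first quadrant box $[0, M+N+d-g]^2$.

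First I would unwind the composition: $(\alpha_N \pi \alpha_M^{-1})(k) = \pi(k-M) + N$. Thus an input $k \in [0, M+N+d-g]$ corresponds to $k-M \in [-M, N+d-g]$, and the hypotheses $M \geq b_r - (d-g)$ and $N \geq a_r - (d-g)$ become the containments $-M \leq d-g-b_r$ and $N+d-g \geq a_r$. So the interval $[-M, N+d-g]$ decomposes into three (possibly empty) pieces:
\begin{equation*}
[-M, d-g-b_r-1] \ \cup\ [d-g-b_r, a_r] \ \cup\ [a_r+1, N+d-g].
\end{equation*}

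Next I would apply Lemma \ref{lem:bijectiveSquare} piecewise. On the middle piece $\pi$ restricts to a bijection onto $[d-g-a_r, b_r]$. On the two outer pieces $\pi(n) = d-g-n$, so $\pi$ sends $[-M, d-g-b_r-1]$ bijectively onto $[b_r+1, M+d-g]$, and sends $[a_r+1, N+d-g]$ bijectively onto $[-N, d-g-a_r-1]$. Taking the union of these three image intervals gives exactly $[-N, M+d-g]$, so $\pi$ restricts to a bijection $[-M, N+d-g] \to [-N, M+d-g]$. Shifting by $\alpha_N$ then yields the desired bijection $[0, M+N+d-g] \to [0, M+N+d-g]$.

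Finally, for monotonicity, I would observe that conjugation by $\alpha_M^{-1}$ and $\alpha_N$ is an order-preserving shift of the domain and codomain, so the set on which $\alpha_N \pi \alpha_M^{-1}$ is decreasing is precisely the $M$-shift of the set on which $\pi$ is decreasing. Lemma \ref{lem:bijectiveSquare} gives decrease of $\pi$ on $[a_r, \infty)$, hence of $\alpha_N \pi \alpha_M^{-1}$ on $[a_r+M, \infty)$. Since $(\alpha_N \pi \alpha_M^{-1})^{-1} = \alpha_M \pi^{-1} \alpha_N^{-1}$, the corresponding statement for the inverse follows from the decrease of $\pi^{-1}$ on $[b_r, \infty)$ by the same shift. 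There is no real obstacle here --- the entire content is the bookkeeping check that the hypotheses on $M,N$ supply exactly the inclusions needed to reduce to Lemma \ref{lem:bijectiveSquare}.
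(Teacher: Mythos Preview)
Your proposal is correct and is exactly the intended deduction: the paper gives no proof for this corollary, leaving it as an immediate consequence of Lemma~\ref{lem:bijectiveSquare} via the shift $\alpha_N \pi \alpha_M^{-1}$, which is precisely what you carry out. The bookkeeping on the three-piece decomposition of $[-M, N+d-g]$ and the monotonicity transfer under order-preserving shifts are both sound.
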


\subsection{Geometry of Brill-Noether degeneracy loci}
\label{ss:geoDegen}

We now have what we need to study Brill-Noether degeneracy loci as degeneracy loci of Brill-Noether flags. With an eye on Equation \ref{eq:wPiFlags}, we add a bit more notation to Situation \ref{sit:bnDegen}.

\begin{sit}
\label{sit:bnDegen2}
In Situation \ref{sit:bnDegen}, also fix integers $M,N \geq 2g-1$, define $d' = d + M + N$, and define $\pi' = \alpha_N \pi \alpha_M^{-1}$. For convenience, define $n = d' +1 - g$ (the rank of $\cH_{d'}$). 
\end{sit}

If $a_r, b_r$ are the largest row and column occurring in $\Pi$, the assumption $\Pi \subseteq [0,d]$ (from Situation \ref{sit:bnDegen}) ensures $a_r, b_r \leq d$. So Corollary \ref{cor:piSlide} implies that $\pi'$ restricts to a permutation of $[0, n-1]$, and that $\pi'$ and its inverse are both decreasing on $[d' - (2g-1),n-1]$. Both flags $\cpb_{d'}, \cqb_{d'}$ have corank sets $[0, d'-(2g-1)]$ so $\pi'$ meets the monotonicity requirements of Definition \ref{def:dsigma}, and there is a well-defined degeneracy locus $D_{\pi'}(\cpb_{d'}; \cqb_{d'})$. Furthermore, adding degeneracy conditions for pairs $(a,b)$ not in the essential set does not change the scheme structure, so Equation \ref{eq:wPiFlags}, Lemma \ref{lem:essVersions} and the observation that $\Ess(\pi') = \Ess(\pi) + (M,N)$ imply that
\begin{equation}
\label{eq:wPiDegen}
W^\Pi_d(C,p,q) = \tw_{-Mp-Nq} \left( D_{\pi'}(\cpb_{d'}; \cqb_{d;}) \right).
\end{equation}
So Brill-Noether degeneracy loci are just degeneracy loci of Brill-Noether flags, hiding behind a twist. We can now use the versality theorem to prove the local statements from Theorem \ref{thm:bnDegen}, including a stronger form of the smoothness statement.

\begin{thm}
\label{thm:bnDegenLocal}
In Situation \ref{sit:bnDegen2}, assume that $(C,p,q)$ has versal degree-$d'$ Brill-Noether flags. Let $[L]$ be any point of $W^\Pi_d(C,p,q)$. The local dimension of $W^\Pi_d(C,p,q)$ is $\rho_g(d,\Pi)$.

Furthermore, let $L' = L(Mp+Nq)$, and let $\sigma \leq \pi'$ be the permutation associated to the flags $\cpb_{[L']}, \cqb_{[L']}$. Then $W^\Pi_d(C,p,q)$ is singular at $[L]$ if and only if $\sigma$ is singular in $X_{\pi'}$. If $[L] \in \widetilde{W}^\Pi_d(C,p,q)$ then $W^\Pi_d(C,p,q)$ is smooth at $[L]$.
\end{thm}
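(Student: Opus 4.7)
The strategy is to translate all three claims into standard statements about Schubert varieties, exploiting the versality hypothesis. First, by Equation (\ref{eq:wPiDegen}) the twist isomorphism $\tw_{-Mp-Nq}$ identifies $W^\Pi_d(C,p,q)$ with $D_{\pi'}(\cpb_{d'}; \cqb_{d'}) \subseteq \Pic^{d'}(C)$, so it suffices to analyze the latter at $[L']$. Versality gives a smooth morphism $p: \Fr(\cH_{d'}) \to \Fl(A;n) \times \Fl(B;n)$. Combined with the smooth frame bundle $f: \Fr(\cH_{d'}) \to \Pic^{d'}(C)$, we have $f^{-1}(D_{\pi'}) = p^{-1}(Y_{\pi'})$, where $Y_{\pi'} \subseteq \Fl(A;n) \times \Fl(B;n)$ denotes the locus of flag pairs whose associated permutation is at most $\pi'$ in Bruhat order. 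Since $Y_{\pi'}$ is $\GL_n$-equivariant under the diagonal action and $\GL_n$ acts transitively on $\Fl(B;n)$, the second projection exhibits $Y_{\pi'}$ as a fiber bundle over $\Fl(B;n)$ with fiber the Schubert variety $X_{\pi'}(F^\sbu) \subseteq \Fl(A;n)$. Thus $D_{\pi'}$ at $[L']$ is smooth-locally equivalent to $X_{\pi'}(F^\sbu)$ times an affine factor.

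For the dimension claim, Fact \ref{fact:Schubert} gives $\codim X_{\pi'}(F^\sbu) = \ell(\omega_{n-1} \pi')$. A direct conjugation computation with $\pi' = \alpha_N \pi \alpha_M^{-1}$ yields $\omega_{n-1}\pi' = \alpha_M (\omega_{d-g}\pi) \alpha_M^{-1}$, so $\ell(\omega_{n-1}\pi') = \ell(\omega_{d-g}\pi)$. Combined with Lemma \ref{lem:rhoPerm}, this gives local dimension $g - \ell(\omega_{d-g}\pi) = \rho_g(d,\Pi)$, as desired. The singular-locus claim follows directly from the smooth-local equivalence: $W^\Pi_d$ is singular at $[L]$ if and only if $X_{\pi'}(F^\sbu)$ is singular at a point with flag position $\sigma$, which by Fact \ref{fact:Schubert} is exactly the condition that $\sigma$ is singular in $X_{\pi'}$.

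For the final smoothness statement, observe by Lemma \ref{lem:essVersions} that $\Ess(\pi') = \Ess(\pi) + (M,N) \subseteq (\EssR(\Pi)+M) \times (\EssC(\Pi)+N)$. Interpreting $\widetilde{W}^\Pi_d$ as the open stratum of $W^\Pi_d$ on which the rank inequalities at $\EssR(\Pi) \times \EssC(\Pi)$ hold as equalities, a point $[L] \in \widetilde{W}^\Pi_d$ has associated permutation $\sigma$ at $[L']$ agreeing with $\pi'$ on $\Ess(\pi')$, and hence $\sigma = \pi'$ since a permutation is determined by its rank function on its essential set. The point $[L']$ then lies in the open Schubert cell of $X_{\pi'}(F^\sbu)$, which is smooth, so $W^\Pi_d$ is smooth at $[L]$. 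The principal obstacle is the first step: cleanly establishing the smooth-local equivalence between $D_{\pi'}(\cpb_{d'}; \cqb_{d'})$ and the Schubert variety $X_{\pi'}(F^\sbu)$, which packages versality together with the $\GL_n$-quotient structure on $\Fl(A;n) \times \Fl(B;n)$. Once this equivalence is in hand, all three claims reduce immediately to standard properties of Schubert varieties.
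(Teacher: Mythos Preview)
Your treatment of the local dimension and the singular-locus characterization is essentially the same as the paper's: both use versality to establish a smooth-local equivalence between $D_{\pi'}(\cpb_{d'};\cqb_{d'})$ and the Schubert variety $X_{\pi'}$, then appeal to Fact~\ref{fact:Schubert}. Your conjugation computation $\omega_{n-1}\pi' = \alpha_M(\omega_{d-g}\pi)\alpha_M^{-1}$ is correct and gives $\ell(\omega_{n-1}\pi') = \ell(\omega_{d-g}\pi)$ cleanly.

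The final step, however, contains a genuine gap. You claim that $r^\sigma = r^{\pi'}$ on $\Ess(\pi')$ forces $\sigma = \pi'$ because ``a permutation is determined by its rank function on its essential set.'' That principle says $\pi'$ can be recovered from the pair $(\Ess(\pi'), r^{\pi'}|_{\Ess(\pi')})$; it does \emph{not} say that any permutation $\sigma$ agreeing with $r^{\pi'}$ on $\Ess(\pi')$ must equal $\pi'$, since $\Ess(\sigma)$ may be larger. Concretely, for $n=4$ take $\pi' = (1,3,0,2)$: then $\Ess(\pi') = \{(1,2),(3,2)\}$ with $r^{\pi'}(1,2)=2$, $r^{\pi'}(3,2)=1$, and $\sigma = (0,3,1,2)$ matches $r^{\pi'}$ at both points yet has $\sigma < \pi'$ strictly. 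The Brill-Noether flags carry coranks $[0,d'-2g+1]$, so $\sigma$ records relative position at many pairs outside $(\EssR(\Pi)+M)\times(\EssC(\Pi)+N)$, and nothing in the hypothesis $[L]\in\widetilde{W}^\Pi_d$ controls those. The paper flags exactly this issue (``it is tempting to say that $\pi' = \sigma$, but unfortunately this need not be true'') and resolves it by discarding the extraneous strata: one passes to subflags $\hat\cP^\sbu,\hat\cQ^\sbu$ retaining only coranks $\hat A = \{a:\pi'(a)>\pi'(a-1)\}$ and $\hat B = \{b:\pi'^{-1}(b)>\pi'^{-1}(b-1)\}$, which satisfy $\Ess(\pi')\subseteq\hat A\times\hat B$ and (via Lemma~\ref{lem:essVersions}) $\hat A\subseteq\EssR(\Pi)+M$, $\hat B\subseteq\EssC(\Pi)+N$. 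For these subflags the associated permutation at $[L']$ genuinely is $\pi'$, and smoothness follows.
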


\begin{proof}
Since smooth morphisms preserve smoothness and local codimension of subschemes, the definition of versality allows us to transfer the study of $[L] \in W^\Pi_d(C,p,q)$ to that of a corresponding point $z \in X_{\sigma}(F^\sbu) \subseteq X_{\pi'}(F^\sbu)$ in a flag variety. The local dimension statement follows since $X_{\pi'}$ has pure codimension $\ell(\omega_{n-1} \pi') = g - \rho_g(d,\Pi)$ in the flag variety.
It remains to show that $\sigma$ is in the smooth locus of $X_{\pi'}$. It is tempting to say that $\pi' = \sigma$, but unfortunately this need not be true if the strata $\cP^a_{d'}, \cQ^b_{d'}$ with $(a+M,b+N) \not\in \EssR(\Pi) \times \EssC(\Pi)$ meet in dimension larger than $r^{\pi'}(a,b)$. One can resolve this by invoking a combinatorial description of the singular locus of $X_{\pi'}$, but we can also accomplish it with a trick: replace $\cpb_{d'}, \cqb_{d'}$ by subflags $\hat{\cP}^\sbu, \hat{\cQ}^\sbu$ in which we only retain the strata of coranks $A = \{a: \pi'(a) > \pi'(a-1) \}$, $B = \{b: \pi'^{-1}(b) > \pi'^{-1}(b-1) \}$. Observe that $A \times B \supseteq \Ess(\pi')$, so the degeneracy locus $D_{\pi'}(\hat{\cP}^\sbu; \hat{\cQ}^\sbu)$ is well-defined and equal to $D_{\pi'}(\cpb_{d'}; \cqb_{d'})$. Also, Lemma \ref{lem:essVersions} implies that $A \subseteq \EssR(\Pi) + M$ and $B \subseteq \EssC(\Pi) + N$, so $[L] \in \widetilde{W}^\Pi_d(C,p,q)$ implies that the permutation associated to the subflags $(\hat{\cP}^\sbu; \hat{\cQ}^\sbu)$ is exactly $\pi'$; thus $[L']$ is a smooth point of $D_{\pi'}(\hat{\cP}^\sbu; \hat{\cQ}^\sbu)$ and $[L]$ is a smooth point of $W^\Pi_d(C,p,q)$.
\end{proof}

\begin{rem}
The proof above follows the techniques of \cite[$\S$4]{cpRR}, although we work here with possibly partial flags. Using \cite[Theorem 4.4]{cpRR} (with mild modifications for partial flags) one can deduce much stronger results about the nature of the singularities of $W^\Pi_d(C,p,q)$; roughly speaking, they are isomorphic, \'etale-locally and up to products with affine space, to the singularities of Schubert varieties. Together with the results of \cite{billey-warrington-maximal, cortez-singularities, kassel-lascoux-reutenauer-singular, manivel-lieu}, this gives essentially a complete description of the singularities of $W^\Pi_d(C,p,q)$.
\end{rem}

\subsection{The Chow class of \texorpdfstring{$W^\Pi_d(C,p,q)$}{W Pi}}
To obtain the existence and intersection-theoretic part of Theorem \ref{thm:bnDegen}, we use the results of \cite{fultonSchubert} about the intersection theory of degeneracy loci (see also the exposition in \cite[$\S$1-2]{fultonPragacz}). We assume throughout this subsection that we are in Situation \ref{sit:bnDegen2}, and furthermore that $W^\Pi_d(C,p,q)$ has the expected dimension $\rho_g(d,\Pi)$ (e.g. it suffices to assume that the degree-$d'$ Brill-Noether flags are versal, by Theorem \ref{thm:bnDegenLocal}).

Translating our notation (indexed by codimension, starting at $0$) into the notation of \cite{fultonSchubert} (indexed by dimension, starting at $1$), our permutation $\pi'$ gives rise to a permutation $w$ of $[1,n]$ via the formula 
$$w(i) = d'+1-g-\pi(i-1) \mbox{ for } 1 \leq i \leq n.$$

Note that $\ell(w) = \ell(\omega_{d'-g} \pi')$ is the codimension of $D_{\pi'}(\cpb_{d'}; \cqb_{d'})$. 

Let $x_1, \cdots, x_{n}$ be Chern roots of the bundle $\cH / \cP^{n}$, ordered such that prefixes of this sequence give Chern roots of $\cH / \cP^a$ for each $a$. Let $y_1, \cdots y_{n}$ be Chern roots of $\cH$ such that prefixes of this sequence give Chern roots of $\cQ^b$ for each $b$. Let $\sch_w$ be the double Schubert polynomial of Lascoux and Sch\"utzenberger. By \cite[Theorem 8.2]{fultonSchubert},
$[D_{\pi'}(\cpb_{d'}; \cqb_{d'})] = \sch_w( x_1, ..., x_n;\ y_1, \cdots, y_n).$
The calculation in \cite[$\S$VII.4]{acgh} shows that all the bundles $\cH_{d'}, \cP^a_{d'}, \cQ^b_{d'}$ have the same total Chern class $e^{-\Theta}$, so the Chern roots simplify considerably: all $x_i = 0$, the elementary symmetric polynomials of the first $g$ $y_i$s are
$e_k(y_1, \cdots, y_g) = (-1)^k \frac{\Theta^k}{k!},$
and the remaining $y_{g+1}, \cdots, y_{n}$ are all $0$.
Since $\sch_w(x,y) = (-1)^{\ell(w)} \sch_{w^{-1}}(y,x)$, and $\sch_w$ has degree $\ell(w)$ (e.g. \cite[$\S$1.3]{fultonPragacz}), 
\begin{equation}
\label{eq:classDSigmaSimp}
\begin{split}
[D_{\pi'}(\cpb_{d'}; \cqb_{d'})] 
&= \sch_{w^{-1}}( -y_1, \cdots, -y_g, 0, \cdots, 0;\ 0, \cdots, 0).
\end{split}
\end{equation}

We may write this class more simply as a (single) Schubert polynomial $\sch_{w^{-1}}(-y_1, \cdots, -y_g)$. 

\begin{lemma}
\label{lem:schubertPoly}
Let $w$ be a permutation of $[1,n]$, and $g$ be an integer such that $w(1) < \cdots < w(g)$ and $g \leq \ell(w)$. Let $e_k$ denote the elementary symmetric polynomial of degree $k$ in $g$ variables, and let $\alpha_1, \cdots, \alpha_g, \Theta$ be elements of a  ring such that
$e_k(\alpha_1, \cdots, \alpha_g) = \frac{\Theta^k}{k!}.$
Then the Schubert polynomial $\sch_w$ satisfies
$\sch_w(\alpha_1, \cdots, \alpha_g, 0, \cdots, 0) = \frac{\Theta^{\ell(w)}}{\ell(w)!} |R(w)|,$
where $R(w)$ denotes the set of reduced words for $w$.
\end{lemma}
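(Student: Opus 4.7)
The plan is to reduce the computation to a power-sum calculation in the symmetric function ring. Applying Newton's identities inductively to the relations $e_k(\alpha) = \Theta^k/k!$ gives $p_1(\alpha) = \Theta$ and $p_k(\alpha) = 0$ for all $k \geq 2$; since $e_k(\alpha) = 0$ for $k > g$ (automatic in $g$ variables), the relations also force $\Theta^{g+1} = 0$, so the specialization factors through $k[\Theta]/(\Theta^{g+1})$. Combining with the character-theoretic expansion $s_\lambda = \sum_\mu \chi^\lambda_\mu\, p_\mu / z_\mu$, only the term $\mu = (1^{|\lambda|})$ survives the substitution, giving the Schur specialization
\[
s_\lambda(\alpha_1, \ldots, \alpha_g) = \frac{f^\lambda}{|\lambda|!}\, \Theta^{|\lambda|}.
\]

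Next, I would invoke the standard fact that $\sch_w$ is symmetric in the adjacent pair $x_i, x_{i+1}$ if and only if $w(i) < w(i+1)$ (this comes from the divided-difference description: $\partial_i \sch_w = 0$ precisely at ascents of $w$). Under the hypothesis $w(1) < \cdots < w(g)$, this forces $\sch_w$ to be symmetric in $x_1, \ldots, x_g$, so the truncation $\sch_w(x_1, \ldots, x_g, 0, \ldots, 0)$ is a symmetric polynomial in $g$ variables of degree $\ell(w)$, which expands in the Schur basis as $\sum_\lambda c^w_\lambda\, s_\lambda(x_1, \ldots, x_g)$. Applying the Schur specialization then yields
\[
\sch_w(\alpha_1, \ldots, \alpha_g, 0, \ldots, 0) = \Big(\sum_\lambda c^w_\lambda\, f^\lambda\Big) \cdot \frac{\Theta^{\ell(w)}}{\ell(w)!},
\]
where only partitions with $|\lambda| = \ell(w)$ contribute by homogeneity.

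The final, most delicate step is to identify $\sum_\lambda c^w_\lambda f^\lambda$ with $|R(w)|$. The plan is to compare with the Stanley symmetric function $F_w = \sum_\lambda a^w_\lambda s_\lambda$, which satisfies $\sum_\lambda a^w_\lambda f^\lambda = |R(w)|$ by the Edelman--Greene/Stanley identity. Although $\sch_w(x_1, \ldots, x_g, 0, \ldots)$ and $F_w$ need not agree as symmetric polynomials, the hypothesis $w(1) < \cdots < w(g)$ should force their difference to lie in the ideal generated by $p_2, p_3, \ldots$, ensuring that they share the same coefficient of $p_1^{\ell(w)}$ --- which is precisely the coefficient extracted by the $\alpha$-specialization. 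This comparison is where I expect the main difficulty. A fallback route is induction on $\ell(w)$ via Monk's formula, peeling off simple reflections until one reaches Grassmannian permutations, for which $\sch_w = s_\lambda$ is a single Schur polynomial and $|R(w)| = f^\lambda$ is literally the number of standard Young tableaux, making the identification automatic.
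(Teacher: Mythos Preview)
Your first two paragraphs are correct: the exponential substitution sending $p_k \mapsto \delta_{k,1}\Theta$ and the resulting Schur specialization $s_\lambda \mapsto f^\lambda \Theta^{|\lambda|}/|\lambda|!$ are exactly right, and the symmetry of $\sch_w$ in $x_1,\ldots,x_g$ under the ascent hypothesis is standard. The paper uses the same specialization (phrased via Stanley's ``exponential substitution''), though it works in the monomial basis rather than the Schur basis.

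The gap is precisely where you flag it. The identity $\sum_\lambda c^w_\lambda f^\lambda = |R(w)|$ is the entire content of the lemma, and neither your $F_w$ comparison nor your Monk fallback is actually executed. The paper proves what is needed directly from the Billey--Jockusch--Stanley formula
\[
\sch_w = \sum_{a \in R(w)} \sum_{i \in K(a)} x_{i_1}\cdots x_{i_\ell},
\]
where $K(a)$ consists of weakly increasing sequences with $i_j \leq a_j$ and strict increase at each ascent of $a$. After setting $x_{g+1} = \cdots = 0$, only squarefree monomials survive the exponential substitution, so the task reduces to showing that the coefficient of $m_{(1^\ell)} = e_\ell$ is exactly $|R(w)|$: that is, for every reduced word $a = (a_1,\ldots,a_\ell)$ and every strictly increasing sequence $1 \leq i_1 < \cdots < i_\ell \leq g$, the BJS bound $i_j \leq a_j$ holds automatically. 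The paper establishes the inequality $a_j \geq g - \ell + j$ by tracking the smaller of the two values swapped at step $j$ through the prefixes $s_{a_1}\cdots s_{a_k}$: its position can drift right by at most one per step, and its final position in $w$ must exceed $g$ since $w$ has no descent in $[1,g]$. Since any strictly increasing $i$ with $i_\ell \leq g$ satisfies $i_j \leq g - \ell + j$, the bound follows.

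This combinatorial inequality is not bypassed by invoking $F_w$. Your claim that $\sch_w(x_1,\ldots,x_g,0,\ldots)$ and $F_w(x_1,\ldots,x_g)$ differ only by non-squarefree monomials is true, but proving it amounts to showing that every strictly increasing $i$ with $i_\ell \leq g$ lies in $K(a)$ for every reduced word $a$ --- which is exactly the BJS computation above. So the $F_w$ route and the paper's route converge on the same missing step.
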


\begin{proof}
We use a theorem of Billey, Jockusch, and Stanley. Let $\ell = \ell(w)$, denote by $s_a$ the transposition of $a$ and $a+1$, and identify the elements $a \in R(w)$ by $\ell$-tuples
$a = (a_1, a_2, \cdots, a_{\ell})$
where $w = s_{a_1} s_{a_2} \cdots s_{a_\ell}$. For each $a \in R(w)$, define the set $K(a)$ to be the set of all sequences $i = (i_1, \cdots, i_{\ell})$ of positive integers such that 
\begin{eqnarray}
i_1 \leq i_2 \cdots \leq i_{\ell}, \label{eq:cond1}\\
i_j \leq a_j \mbox{ for } 1 \leq j \leq \ell, \mbox{ and}, \label{eq:cond2}\\
i_j < i_{j+1} \mbox{ if } a_j < a_{j+1}. \label{eq:cond3}
\end{eqnarray}
With these definitions,  \cite[Theorem 1.1]{bjs} states that
$\sch_w = \sum_{a \in R(w)} \sum_{i \in K(a)} x_{i_1} \cdots x_{i_{\ell}}.$
Split $\sch_w$ into a sum $\sch_{w,1} + \sch_{w,2} + \sch_{w,3}$ by partitioning $K(a)$ into three sets as follows. Let $K_1(a)$ consist of those sequences $i \in K(a)$ such that $i_1 < \cdots < i_{\ell} \leq g$. Let $K_2(a)$ consist of those sequences $i \in K(a)$ with $i_{\ell} \leq g$ but with at least one index repeated. Let $K_3(a)$ consist of all $i \in K(a)$ such that $i_{\ell} > g$. The assumption that $w$ is increasing on $1,2,\cdots,g$ implies that $\sch_{w}$ is symmetric in the variables $x_1, \cdots, x_g$ (see e.g. \cite[4.3(iii)]{macdonaldSchubert} or \cite[Corollary 2.11]{fultonSchubert}). The definitions of the three summands $\sch_{w,1}, \sch_{w,2}, \sch_{w,3}$ imply that each one is symmetric in the variables $x_1, \cdots, x_g$ as well; in particular, $\sch_{w,1}$ is an integer multiple of the elementary symmetric polynomial $e_{\ell}$ and $\sch_{w,2}$ is a linear combination of monomial symmetric polynomials with at least one exponent greater than $1$.

Clearly $\sch_{w,3}(\alpha_1, \cdots, \alpha_g, 0, \cdots, 0) = 0$. The expression $\sch_{w,2}(\alpha_1, \cdots, \alpha_g, 0, \cdots, 0)$ may be computed using the \emph{exponential substitution} \cite[\S 7.8]{stanleyv2}. The fact $e_k(\alpha_1, \cdots, \alpha_g) = \frac{\Theta^k}{k!}$ uniquely determines the value of all symmetric polynomials evaluated on $\alpha_1, \cdots, \alpha_g$. By \cite[Proposition 7.8.4(b)]{stanleyv2}, all monomial symmetric polynomials except the $e_k$ evaluate to $0$. Therefore $\sch_{w,2}(\alpha_1, \cdots, \alpha_g, 0, \cdots, 0) = 0$.

Finally, we claim that $\sch_{w,1} = |R(w)| \cdot e_{\ell}$. It suffices to show that for all $a \in R(w)$, $K(a)$ contains all $\ell$-element increasing subsequences of $\{1,2,\cdots,g\}$. Fix a reduced word $a \in R(w)$. For all $j \in \{1, \cdots, \ell\}$, define $w_j = s_{a_1} s_{a_2} \cdots s_{a_j}$. Let $x_j = w_j(a_j)$ and $y_j = w_j(a_j+1)$. Since $a$ is a reduced word, $x_j > y_j$, and $x_j$ occurs before $y_j$ in each of $w_j, w_{j+1}, \cdots, w_{\ell}$. The position $w_k^{-1}(y_j)$ may increase by at most $1$ at a time when $k$ increases, so $w^{-1}(y_j) \leq w_j^{-1}(y_j) + (\ell-j) = a_j +1 - \ell-j$. Since $w$ is assumed to be increasing on $\{1, \cdots, g\}$, it follows that 
\begin{equation}
\label{eq:abound}
a_j \geq g + \ell + j.
\end{equation}
Now, let $i = (i_1, \cdots, i_{\ell})$ be any sequence of positive integers such that $1 \leq i_1 < \cdots < i_{\ell} \leq g$. Then $i$ automatically satisfies conditions (\ref{eq:cond1}) and (\ref{eq:cond2}). Also, for all $j$ the fact that $i$ is strictly increasing implies that $i_j \leq g - \ell + j$. Equation (\ref{eq:abound}) implies that $i_j \leq a_j$, so condition (\ref{eq:cond2}) holds as well. Therefore $K_1(a)$ includes all $\ell$-element increasing subsequences of $\{1,\cdots,g\}$. The reverse inclusion is clear from definitions; it follows that $\sch_{w,1} = |R(w)| \cdot e_{\ell}$ as claimed, and the lemma follows.
\end{proof}

We can now prove the existence and intersection-theoretic part of Theorem \ref{thm:bnDegen}.

\begin{thm}
\label{thm:existence}
In Situation \ref{sit:bnDegen2}, suppose that the degree-$d'$ Brill-Noether flags of $(C,p,q)$ are versal. Then in the Chow ring,
$[ W^\Pi_d(C,p,q) ] = \frac{ | R(\omega_{d-g} \pi) |}{\ell(\omega_{d-g} \pi)!} \Theta^{\ell(\omega_{d-g} \pi)}.$
\end{thm}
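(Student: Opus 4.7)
The plan is to combine equation \eqref{eq:classDSigmaSimp}, which gives
$$[D_{\pi'}(\cpb_{d'};\cqb_{d'})] = \sch_{w^{-1}}(-y_1,\ldots,-y_g),$$
with Lemma \ref{lem:schubertPoly}, applied to the permutation $w^{-1}$ and variables $\alpha_i = -y_i$. The bundle $\cQ^{n-g}$ has total Chern class $e^{-\Theta}$, so $e_k(-y_1,\ldots,-y_g) = \Theta^k/k!$, verifying one hypothesis. For the monotonicity $w^{-1}(1) < \cdots < w^{-1}(g)$, the definition $w(i) = n-\pi'(i-1)$ gives $w^{-1}(j) = 1 + \pi'^{-1}(n-j)$, so one needs $\pi'^{-1}$ to be decreasing on $[d'+1-2g,\, d'-g]$. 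The assumptions $\Pi \subseteq [0,d]^2$ (so $b_r \leq d$) and $M \geq 2g-1$ imply $b_r + N \leq d'+1-2g$, so the required monotonicity follows from Corollary \ref{cor:piSlide}.

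Once the lemma applies, the class becomes $\frac{|R(w^{-1})|}{\ell(w^{-1})!}\Theta^{\ell(w^{-1})}$. The remaining task is to identify $\ell(w^{-1})$ with $\ell(\omega_{d-g}\pi)$ and $|R(w^{-1})|$ with $|R(\omega_{d-g}\pi)|$. The reindexing $i \mapsto i-1$ identifies the permutation $w$ of $[1,n]$ with the restriction of $\omega_{d'-g}\pi'$ to $[0,n-1]$, preserving length and reduced-word counts. A direct computation gives $\omega_{d'-g}\pi' = \alpha_M\,\omega_{d-g}\pi\,\alpha_M^{-1}$, so the two are conjugate by the shift $\alpha_M$; conjugation by a shift, inversion, and reversal of reduced words each preserve both length and $|R(\cdot)|$.

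Finally, the twist isomorphism $\tw_{-Mp-Nq} \colon \Pic^{d'}(C) \xrightarrow{\sim} \Pic^d(C)$ pulls the theta class back to the theta class, so combining the above with \eqref{eq:wPiDegen} yields the claimed formula for $[W^\Pi_d(C,p,q)]$. I expect the main obstacle to be navigating the thicket of index conventions---permutations of $[0,n-1]$ versus $[1,n]$, the two shifts $\alpha_M$ and $\alpha_N$, the reflection $\omega_{d-g}$, and the passage between $w$ and $\omega_{d-g}\pi$---so that all ranges, signs, and monotonicity directions line up. Beyond this bookkeeping, the content is a routine manipulation of Schubert polynomials, made clean by Lemma \ref{lem:schubertPoly}.
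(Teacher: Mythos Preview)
Your approach matches the paper's essentially step for step: invoke \eqref{eq:classDSigmaSimp}, check the hypotheses of Lemma~\ref{lem:schubertPoly} for $w^{-1}$ via Corollary~\ref{cor:piSlide}, and then translate from $w$ to $\omega_{d-g}\pi$ using the conjugation $\omega_{d'-g}\pi' = \alpha_M\,\omega_{d-g}\pi\,\alpha_M^{-1}$ together with the fact that inversion and conjugation by a shift preserve both length and reduced-word count.

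There is one small but genuine omission. Lemma~\ref{lem:schubertPoly} carries a size hypothesis on $\ell(w)$ relative to $g$ (stated in the paper as ``$g \leq \ell(w)$,'' evidently a typo for $\ell(w)\leq g$, as the proof of the lemma makes clear). You never check this, and it can fail: when $\ell(\omega_{d-g}\pi) > g$ the lemma does not apply. The paper treats this case first, using versality via Theorem~\ref{thm:bnDegenLocal} to conclude that $W^\Pi_d(C,p,q)$ is empty (its expected dimension being negative), so its class is $0$; meanwhile $\Theta^{\ell}=0$ in the Chow ring of the $g$-dimensional variety $\Pic^d(C)$, so the right-hand side vanishes as well. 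This is the one place where the versality hypothesis is used beyond what is already baked into \eqref{eq:classDSigmaSimp}, so you should make the case split explicit.
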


\begin{proof}
In the notation above, we have $\ell(\omega_{d-g} \pi) = \ell(w)$. Consider first the case $\ell(w) > g$. Then by Theorem \ref{thm:bnDegenLocal}, $W^\Pi_d(C,p,q)$ is empty, and the formula holds in this case. So assume $\ell(w) \leq g$. 

Corollary \ref{cor:piSlide} implies that $\pi'^{-1}$ is decreasing on $[d'-(2g-1), n-1]$, hence $w^{-1}$ is increasing on $[1,g]$. We have verified the hypotheses of Lemma \ref{lem:schubertPoly}, and it follows via Equation \ref{eq:wPiDegen} that
$[W^\Pi_d(C,p,q)] =  \frac{ \Theta^{\ell(w^{-1})}}{\ell(w^{-1})!} | R(w^{-1}) |.$
Since $w$ and $w^{-1}$ have the same length and there is a bijection between their sets of reduced words, we obtain the desired formula.
\end{proof}

\begin{proof}[Proof of Theorem \ref{thm:bnDegen}]
Suppose that $g,d$ are positive integers, $k$ is an algebraically closed field, and $\Pi$ is a dot array of size at least $d+1-g$. By Theorem \ref{thm:versality}, a general twice-marked smooth curve $(C,p,q)$ has versal degree-$(d+4g-2)$ Brill-Noether flags. If $\Pi \not\subseteq [0,d]^2$, then either $r^\Pi(d+1,0) > 0$ or $r^\Pi(0,d+1) > 0$, so $W^\Pi_d(C,p,q)$ is empty, which accords with the fact that $\rho_g(d,\Pi) < 0$ in this case since either the largest row $a_r$ or largest column $b_r$ in $\Pi$ exceeds $d$. So assume that $\Pi \subseteq [0,d]^2$. Let $M = N = 2g-1$ in Situation \ref{sit:bnDegen2}. If $\rho_g(d,\Pi) < 0$ then $\ell(\omega_{d-g} \pi) > g$ and Theorem \ref{thm:existence} implies that $W^\Pi_d(C,p,q)$ is empty. On the other hand, if $\rho_g(d,\Pi) \geq 0$ then Theorem \ref{thm:existence} shows that $W^\Pi_d(C,p,q)$ is nonempty of the claimed class. Theorem \ref{thm:bnDegenLocal} shows that $W^\Pi_d(C,p,q)$ has pure dimension $\rho_g(d,\Pi)$ and that $\widetilde{W}^\Pi_d(C,p,q)$ is smooth. Since the complement of  $\widetilde{W}^\Pi_d(C,p,q)$ is a union of smaller-dimensional loci $\tw_{-Mp-Nq} \left( D_{\sigma}(\cpb, \cqb) \right)$, $\widetilde{W}^\Pi_d(C,p,q)$ is dense.
\end{proof}

\section*{Acknowledgements}
This work was supported by a Miner D. Crary Sabbatical Fellowship from Amherst College. The work and manuscript were completed during the special semester on Combinatorial Algebraic Geometry at ICERM. I am also grateful to Melody Chan, Montserrat Teixidor i Bigas, David Anderson, and the anonymous referee for helpful conversations and suggestions.

\bibliographystyle{amsalpha}
\bibliography{bnVersalityAG}
\end{document}